\numberwithin{equation}{section}
\newtheorem{theorem}{Theorem}[section]
\newtheorem{lemma}[theorem]{Lemma}
\newcommand\Item[1][]{%
  \ifx\relax#1\relax  \item \else \item[#1] \fi
  \abovedisplayskip=0pt\abovedisplayshortskip=0pt~\vspace*{-\baselineskip}}
\theoremstyle{definition}
\newtheorem{defn}[theorem]{Definition}
\theoremstyle{definition}
\newtheorem{remark}[theorem]{Remark}
\DeclareMathOperator{\Prob}{\mathbf{P}}
\DeclareMathOperator{\E}{\mathbf{E}}
\DeclareMathOperator{\bin}{Bin}
\DeclareMathOperator{\poi}{Poisson}
\DeclareMathOperator{\NW}{NW}
\DeclareMathOperator{\NL}{NL}
\DeclareMathOperator{\ND}{ND}
\DeclareMathOperator{\nw}{nw}
\DeclareMathOperator{\nl}{n\ell}
\DeclareMathOperator{\FP}{FP}
\DeclareMathOperator{\bnw}{\mathbf{n}\mathbf{w}}
\DeclareMathOperator{\bnl}{\mathbf{n}\ensuremath{\boldsymbol\ell}}
\DeclareMathOperator{\MW}{MW}
\DeclareMathOperator{\ML}{ML}
\DeclareMathOperator{\MD}{MD}
\DeclareMathOperator{\mw}{mw}
\DeclareMathOperator{\ml}{m\ell}
\DeclareMathOperator{\bmw}{\mathbf{m}\mathbf{w}}
\DeclareMathOperator{\bml}{\mathbf{m}\ensuremath{\boldsymbol\ell}}
\DeclareMathOperator{\ESW}{ESW}
\DeclareMathOperator{\ESL}{ESL}
\DeclareMathOperator{\EEW}{EEW}
\DeclareMathOperator{\EEL}{EEL}
\DeclareMathOperator{\esw}{esw}
\DeclareMathOperator{\esl}{es\ell}
\DeclareMathOperator{\eew}{eew}
\DeclareMathOperator{\eel}{ee\ell}
\DeclareMathOperator{\besw}{\mathbf{e}\mathbf{s}\mathbf{w}}
\DeclareMathOperator{\beel}{\mathbf{e}\mathbf{e}\ensuremath{\boldsymbol\ell}}
\DeclareMathOperator{\nd}{nd}
\DeclareMathOperator{\md}{md}
\DeclareMathOperator{\F}{\mathcal{F}}
\DeclareMathOperator{\tv}{TV}
\DeclareMathOperator{\blbl}{bb}
\DeclareMathOperator{\br}{br}
\DeclareMathOperator{\rr}{rr}
\DeclareMathOperator{\out}{Out}
\title{Combinatorial games on multi-type Galton-Watson trees}
\date{}
\author{Moumanti Podder}
\address{Moumanti Podder, Indian Institute of Science Education and Research (IISER) Pune, Dr.\ Homi Bhabha Road, Pashan, Pune 411008, Maharashtra, India.}
\email{moumanti@iiserpune.ac.in}
\begin{document}
\bibliographystyle{plainnat}

\begin{abstract}
When normal and mis\`{e}re games are played on bi-type binary Galton-Watson trees (with vertices coloured blue or red and each having either no child or precisely $2$ children), with one player allowed to move along monochromatic edges and the other along non-monochromatic edges, the draw probabilities equal $0$ unless every vertex gives birth to one blue and one red child. On bi-type Poisson trees where each vertex gives birth to $\poi(\lambda)$ offspring in total, the draw probabilities approach $1$ as $\lambda \rightarrow \infty$. We study such \emph{novel} versions of normal, mis\`{e}re and escape games on rooted multi-type Galton-Watson trees, with the ``permissible" edges for one player being disjoint from those of her opponent. The probabilities of the games' outcomes are analyzed, compared with each other, and their behaviours as functions of the underlying law explored.
  
\end{abstract}

\subjclass[2020]{60C05, 68Q87, 05C05, 05C57, 05C80, 05C65, 05D40}

\keywords{two-player combinatorial games; normal, mis\`{e}re and escape games; multi-type Galton-Watson trees; fixed points; bi-type binary trees; bi-type Poisson trees}

\maketitle

\section{Introduction}\label{sec:intro}
\begin{figure}[h!]
  \centering
    \includegraphics[width=0.8\textwidth]{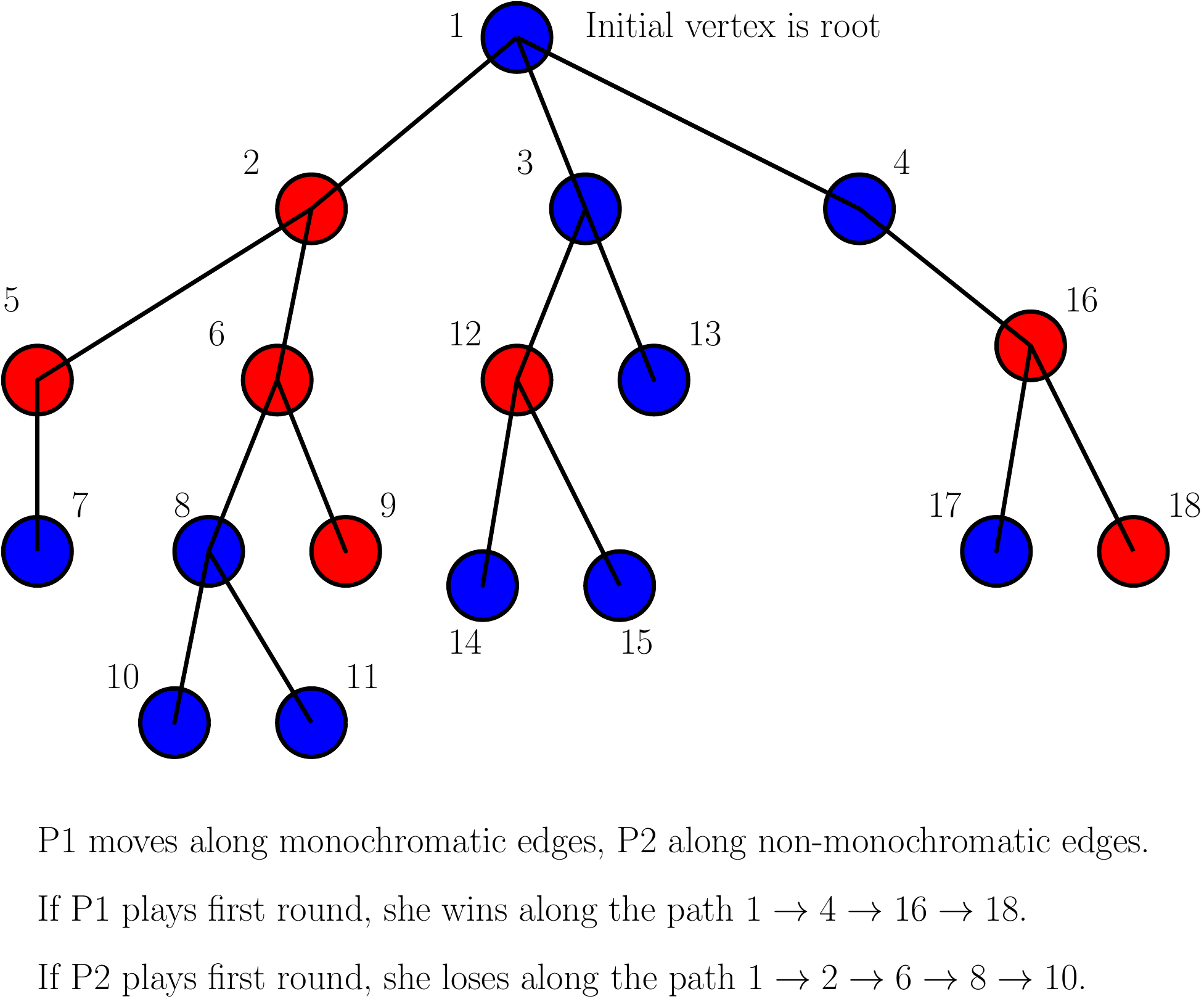}
  \caption{Normal game on a bi-type tree}
  \label{normal_illus}
\end{figure}

\sloppy This paper is dedicated to the analysis of three well-known games -- the \emph{normal}, the \emph{mis\`{e}re} and the \emph{escape} games -- on \emph{rooted multi-type Galton-Watson trees}. These games are played on directed, acyclic graphs. Given a realization of a rooted random tree, we assign the following notion of direction to each of its edges: edge $\{u, v\}$ is directed from $u$ to $v$ if $u$ is the parent of $v$. Each of these games involves two players and a token. The vertex on which the token is placed at the beginning of the game is known as the \emph{initial vertex}. The players take turns to move the token along the directed edges, conforming to the rules that we describe in detail in Definition~\ref{defn:games}. 

The (extremely broad class of) \emph{combinatorial games} (see, for example, \cite{survey_games, complexity_appeal} for a general introduction to these games as well as a discussion of the vast literature devoted to this topic) are two-player games with perfect information, no chance moves, and the possible outcomes being victory for one player (and loss for the other) and draw for both players. Countless intriguing and natural mathematical problems that belong to complexity classes harder than \emph{NP} constitute two-player combinatorial games. Besides, these games have applications / connections to disciplines such as mathematical logic, automata theory, complexity theory, graph and matroid theory, networks, error-correcting codes, online algorithms, and even, outside of mathematics, to biology, psychology, economics, insurance, actuarial studies and political sciences. 

\subsection{The (simple) Galton-Watson branching process} A \emph{rooted Galton-Watson} (GW) branching process $\mathcal{T}_{\chi}$, introduced in \cite{galton_watson} (independently studied in \cite{bienayme}) as a model to investigate the extinction of ancestral family names, begins with the root $\phi$ giving birth to a random number $X$ of children where $X$ follows the \emph{offspring distribution} $\chi$ (a probability distribution supported on $\mathbb{N}_{0}$). If $X = 0$, we stop the process. If $X = k \in \mathbb{N}$, the children of $\phi$ are named $v_{1}, \ldots, v_{k}$ in some order, and $v_{i}$ gives birth to $X_{i}$ children with $X_{1}, \ldots, X_{k}$ i.i.d.\ $\chi$. This process continues, and it survives (i.e.\ continues forever) with positive probability iff the expectation of $\chi$ exceeds $1$. We refer the reader to \cite{athreya_vidyashankar}, \cite{athreya_jagers} and \cite{athreya_ney} for further reading on GW trees. 

\subsection{The games when played on a simple GW tree}\label{subsec:simple_version} We call the players \emph{P1} and \emph{P2} when they play the normal and mis\`{e}re games, and \emph{Stopper} and \emph{Escaper} when they play the escape game. A realization $T$ of $\mathcal{T}_{\chi}$ is fixed, the token placed on an initial vertex $v$ of $T$, the players take turns to move the token along directed edges, and the outcomes of the games are decided as follows: 
\begin{enumerate}
\item \textbf{Normal game:} Whoever fails to make a move for the first time in the game, loses. If the token never reaches a leaf vertex throughout the game, it results in a draw. See Figure~\ref{normal_illus}.
\item \textbf{Mis\`{e}re game:} Whoever fails to make a move for the first time in the game, wins. If the token never reaches a leaf vertex throughout the game, then the game results in a draw.
\item \textbf{Escape game:} If either player fails to make a move, Stopper wins. Else Escaper wins. This game never results in a draw.
\end{enumerate}
We are concerned with \emph{optimal play}, i.e.\ when the game does not end in a draw, the player destined to win tries to win as quickly as possible, while her opponent tries to prolong it as much as possible. Analysis of these games on rooted simple GW trees has been carried out in \cite{holroyd_martin}. 

\subsection{Motivation for studying such combinatorial games} We dwell here on several combinatorial games that have been studied on random structures, along with their myriad theoretical applications and connections to other areas of mathematics. \cite{percolation_games} studies normal and a variant of mis\`{e}re games on percolation clusters of oriented Euclidean lattices. This \emph{percolation game} assigns one of the labels ``trap", ``target" and ``open" to each site of $\mathbb{Z}^{2}$ with probabilities $p$, $q$ and $1-p-q$ respectively, and the players take turns to move a token from its current position $(x,y)$ to either $(x+1,y)$ or $(x,y+1)$. If a player moves to a target, she wins immediately, and if she moves to a trap, she loses immediately. The game's outcome can be interpreted in terms of the evolution of a one-dimensional discrete-time probablistic cellular automaton (PCA) -- specifically, the game having no chance of ending in a draw is shown to be equivalent to the ergodicity of this PCA. \cite{percolation_games} also establishes a connection between the percolation game with $q = 0$ (called the \emph{trapping game}) on directed graphs in higher dimensions and the hard-core model on related undirected graphs with reduced dimensions. \cite{trapping_games} studies the trapping game on undirected graphs. The players take turns to move the token from the vertex of its current position to an adjacent vertex that has never been visited before. The player unable to make a move loses. The outcome of this game is shown to have close ties with maximum-cardinality matchings, and a draw in this game relates to the sensitivity of such matchings to boundary conditions. \cite{wastlund} studies a related, two-person zero-sum game called \emph{exploration} on a \emph{rooted distance model}, to analyze minimum-weight matchings in edge-weighted graphs. In a related game called \emph{slither} (\cite{slither}), the players take turns to claim yet-unclaimed edges of a simple, undirected graph, such that the chosen edges, at all times, form a path, and whoever fails to move, loses. This too serves as a tool for understanding maximum matchings in graphs. 

The \emph{maker-breaker positional games} (\cite{positional_games_book}) involve a set $X$, a collection $\mathcal{F}$ of subsets of $X$, and $a, b \in \mathbb{N}$. \emph{Maker} and \emph{Breaker} take turns to claim yet-unclaimed elements of $X$, with Maker choosing $a$ elements at a time and Breaker $b$ elements at a time, until all elements of $X$ are exhausted. Maker wins if she claims all elements of a subset in $\mathcal{F}$. When this game is played on a graph, the players take turns to claim yet-unclaimed edges, and Maker wins if the subgraph induced by her claimed edges satisfies a desired property (e.g.\ it is connected, it forms a clique of a given size, a Hamiltonian cycle, a perfect matching or a spanning tree). The game is \emph{unbiased} when $a = b$, and \emph{biased} otherwise. This game has intimate connections with existential fragments of first order and monadic second order logic on graphs. \cite{milos_thesis} and \cite{milos_tibor} study the threshold probability $p_{c}$ beyond which Maker has a winning strategy when this game is played on Erd\H{o}s-R\'{e}nyi random graphs $G(n,p)$; \cite{hamiltonian_maker_breaker} studies the game for Hamiltonian cycles on the complete graph $K_{n}$; \cite{maker_breaker_geometric} studies the game on random geometric graphs; \cite{biased_random_boards} studies the \emph{critical bias} $b^{*}$ of the $(1 : b)$ biased game on $G(n, p(n))$ for $p(n) = \Theta\left(\ln n/n\right)$. In addition, \cite{milos_thesis} studies  the game where Maker wins if she can claim a non-planar graph or a non-$k$-colourable graph. \cite{biased_positional} indicates a deep connection between positional games on complete graphs and the corresponding properties being satisfied by a random graph. This follows from \emph{Erd\H{o}s' probabilistic intuition}, which states that the course of a combinatorial game between two players playing optimally often resembles the evolution of a purely random process. 

Finally, the \emph{Ehrenfeucht-Fra\"{i}ss\'{e} games} play a pivotal role in our understanding of first and monadic second order logic on random rooted trees and random graphs (see, for example, \cite{spencer_threshold, spencer_thoma, spencer_stjohn, kim, bohman, pikhurko, verbitsky, maksim_1, maksim_2, maksim_3, maksim_4, maksim_5, maksim_6, podder_1, podder_2, podder_3, podder_5}).

\subsection{Novelty of the games studied in this paper} To the best of our knowledge, the normal, mis\`{e}re and escape games have not been studied in the premise of graphs with coloured vertices. The rules to be followed when the games are played on rooted multi-type GW trees, as described in Definition~\ref{defn:games}, are significantly different from those studied in \cite{holroyd_martin} (see also \S\ref{subsec:simple_version}). In particular, the directed edges along which one player is allowed to move are no longer the same as those along which the other player is allowed to move, thereby breaking the symmetry and making the analysis far more complicated. This paper thus serves as a pretty broad generalization of the work in \cite{holroyd_martin}. It no longer suffices for the players to take note of the leaf vertices of the tree and the paths that lead down to those vertices -- they are now required to use a \emph{look-ahead strategy} that must take into account the colours of the endpoints of every directed edge they may have to traverse. Moreover, much of the analysis in this paper involves multivariable functions defined on subsets of $[0,1]^{r}$ for some $r \in \mathbb{N}$, and the calculus used to draw conclusions about corresponding functions defined on subsets of $[0,1]$ in \cite{holroyd_martin} no longer applies to our set-up.  

There are \textbf{several sharp points of contrast} between some of the results in this paper and those of \cite{holroyd_martin}. First, we note that despite the far more complicated set-up of this paper compared to that of \cite{holroyd_martin}, the inequalities in Theorem~\ref{thm:main_2} (some of which are analogous to those in [\cite{holroyd_martin}, Theorem 2]) require very few, and intuitively very reasonable, assumptions (see, for example, Remark~\ref{rem_intuitive} in relation to Theorem~\ref{thm:main_2}, part \ref{main_2_part_3}). The inequalities in \ref{main_2_part_3} are, rather surprisingly, \emph{very} different from [\cite{holroyd_martin}, Theorem 2, part (iii)], and those in \ref{main_2_part_2} are, in some sense, stronger than the inequalities in [\cite{holroyd_martin}, Theorem 2, part (ii)]. We take into account the possibility of offspring distributions having infinite expectations in Theorem~\ref{thm:main_3}, which is not the case with [\cite{holroyd_martin}, Theorem 3]. The analysis of the games on bi-type binary GW trees in Theorem~\ref{thm:main_example_1} yields a strikingly different picture from that obtained while studying them on the simple binary GW tree (see [\cite{holroyd_martin}, Proposition 3, part (i)]), specifically in terms of phase transitions for draw probabilities in the normal and mis\`{e}re games and probabilities of Escaper winning the escape game. In some sense, Theorem~\ref{thm:main_example_1} gives us results reminiscent of $0-1$ laws. Theorem~\ref{thm:main_example_2} shows that on bi-type Poisson GW trees, where each vertex, irrespective of its colour, has $\poi(\lambda)$ children in total, the draw probabilities of the normal and mis\`{e}re games and the probabilities of Escaper winning the escape game all approach $1$ as $\lambda \rightarrow \infty$.

\subsection{Inspirations and possible applications of the games studied in this paper} One of the primary inspirations for studying these particular versions of the games lies in our interest in understanding \emph{illegal moves} in mathematical games. As seen in Definition~\ref{defn:mult_GW}, a move made by one of the players along an edge not \emph{permissible} for her is considered illegal and is not allowed. One may think of more ``relaxed" versions of these games where a player is allowed to make illegal moves at most $k$ times in the game, for some $k \in \mathbb{N}$, or the total number of illegal moves made by both players is allowed to be at most $k$. Such games are reminiscent of rules employed by the World Chess Federation (FIDE): when in a timed game, the first illegal move by a player awards a certain number of minutes worth of extra time to her opponent, while a second illegal move forfeits the game. They also resemble \emph{liar} and \emph{half-liar games} where one of the players, Carole, is allowed to lie a certain number of times at the most when answering questions asked by her opponent, Paul.

As far as \textbf{applications} are concerned, we mention here a curious connection -- one that could be exploited for investigating a generalized version of the PCA studied in \cite{percolation_games} -- between the games analyzed in this paper and the percolation games (in particular, the trapping games). Although this paper concerns itself with games played on rooted trees, let us for a moment imagine the games being played on an oriented lattice $\mathbb{Z}^{2}$ with each vertex coloured either blue (denoted $b$) or red (denoted $r$). Any directed edge in this lattice is either of the form $((x,y), (x+1,y))$ or of the form $((x,y), (x,y+1))$. Suppose P1 is allowed to move only along monochromatic directed edges and P2 only along non-monochromatic directed edges. When it comes to the normal game, we may think of each site $u = (x,y)$ being assigned one of the following labels according to the fate P1 meets with if she moves the token to $u$:
\begin{itemize}
\item if $\sigma(u) = b$ with no red child, or $\sigma(u) = r$ with no blue child, $u$ is a target for P1;
\item if $\sigma(u) = b$ and $u$ has at least one red child $v$ with no red child of its own, or if $\sigma(u) = r$ and $u$ has at least one blue child $v$ with no blue child of its own, then $u$ is a trap for P1;
\item in all other cases, $u$ is marked open for P1.
\end{itemize}
Likewise, when it comes to deciding P2's fate upon reaching $u$,
\begin{itemize}
\item if $\sigma(u) = b$ with no blue child, or $\sigma(u) = r$ with no red child, $u$ is a target for P2;
\item if $\sigma(u) = b$ and $u$ has at least one blue child $v$ with no red child of its own, or if $\sigma(u) = r$ and $u$ has at least one red child $v$ with no blue child of its own, then $u$ is a trap for P2.
\end{itemize}
Following the notations used in \cite{percolation_games}, for $i = 1, 2$, we let $\eta_{i}(u) = W$ if the game that begins with the token at $u$ and Pi playing the first round is won by Pi, $\eta_{i}(u) = L$ if it is lost by Pi, and $\eta_{i}(u) = D$ if it ends in a draw. If $u = (x,y)$ is a trap for P2, then $\eta_{1}(u) = W$, and if $u$ is a target for P2, then $\eta_{1}(u) = L$. Setting $\out(u) = \{(x+1,y), (x,y+1)\}$, if $\sigma(u) = b$ and $u$ is open for P2, then 
\begin{itemize}
\item $\eta_{1}(u) = W$ if there exists at least one $v \in \out(u)$ with $\sigma(v) = b$ and $\eta_{2}(v) = L$;
\item $\eta_{1}(u) = L$ if $\sigma(v) = b$ implies that $\eta_{2}(v) = W$ for every $v \in \out(u)$;
\item $\eta_{1}(u) = D$ otherwise.
\end{itemize}
We analogously derive the recursions when $\sigma(u) = r$ and $u$ is open for P2. Although this yields a more complicated set-up than that in \cite{percolation_games}, whether we should consider $\eta_{1}(u)$ or $\eta_{2}(u)$ for any $u = (x,y)$ ought to depend on whether the parities of $x_{0}+y_{0}$ and $x+y$ are the same and on whether P1 or P2 plays the first round, where $(x_{0},y_{0})$ is the site from which the game begins. It seems likely that this version of the percolation games will allow us to study a broader, more general class of probabilistic cellular automata -- one where we may have to consider not one but two states $\eta_{t}^{(1)}(n)$ and $\eta_{t}^{(2)}(n)$ of any site $n \in \mathbb{Z}$ at any point of time $t$, and the alphabet needs to be extended from $\{0,1\}$ to $\{0b, 0r, 1b, 1r\}$. This may also aid us in studying a statistical mechanical model, with hard constraints, that is a generalization of the hard-core model studied in \cite{percolation_games} and that involves the assignment of a label from $\{0,1\}$ \emph{and} a colour from $\{b, r\}$ to each vertex of the graph.

We strongly suspect that the games we study here are also applicable in extending the results connecting trapping games with maximum matchings in \cite{trapping_games}. As an example, let us consider the trapping game on a graph each of whose vertices has been assigned one of two colours: blue and red. P1, as above, is allowed to move the token along monochromatic edges and P2 along non-monochromatic edges, making sure to never re-visit a vertex. Inspired by the conclusion of [\cite{trapping_games}, Proposition 4], we surmise that in order for P1 to win the game starting from a vertex $v$, it is perhaps necessary for $v$ to be contained in every \emph{colour-coordinated maximum matching} of the graph, i.e.\ every maximal matching where each vertex and its partner must be of the same colour.

We allude here to a related, and analytically very similar, version of each of these games that may also be studied. Fixing a non-empty proper subset $S$ of $[m]$, P1 (in the normal and mis\`{e}re games) and Stopper (in the escape game) are allowed to move the token along a directed edge $(u,v)$ as long as $\sigma(v) \in S$, whereas P2 and Escaper must move only along directed edges $(u, v)$ with $\sigma(v) \in [m] \setminus S$. An even more generalized version involves two non-empty subsets $S_{1}$ and $S_{2}$ of $[m]$ with $S_{1} \cup S_{2} \subsetneq [m]$. P1 and Stopper are allowed to move along $(u,v)$ with $\sigma(v) \in [m] \setminus S_{2}$, and P2 and Escaper along $(u,v)$ with $\sigma(v) \in [m] \setminus S_{1}$. Consequently, the sets of permissible edges for the two players now overlap. Careful analysis is required to understand the impact of such generalizations on the outcome probabilities of these games.  

\subsection{Main definitions and illustrative examples}\label{subsec:defns}
Here, we define the rooted multi-type Galton-Watson branching process and the games we study on it, followed by a couple of motivating examples.
\begin{defn}\label{defn:mult_GW}
Given a finite set of colours $[m]$, a probability vector $\mathbf{p} = (p_{1}, \ldots, p_{m})$ and probability distributions $\chi_{1}, \ldots, \chi_{m}$ with each $\chi_{j}$ supported on $\mathbb{N}_{0}^{m}$, the rooted multi-type Galton-Watson branching process $\mathcal{T} = \mathcal{T}_{[m] , \mathbf{p}, \bm{\chi}}$, with $\bm{\chi} = \left(\chi_{j}: j \in [m]\right)$, is generated as follows. The root $\phi$ is assigned a colour $\sigma(\phi)$ from $[m]$ according to $\mathbf{p}$. From there onward, every vertex $v$ of the tree, provided that its colour $\sigma(v)$ equals $i$ for some $i \in [m]$, gives birth, independent of all else, to $X_{v,j}$ many offspring of colour $j$ for all $j \in [m]$, where $(X_{v,1}, \ldots, X_{v,m}) \sim \chi_{i}$, i.e.\ for all $n_{1}, \ldots, n_{m} \in \mathbb{N}_{0}$,
\begin{equation}\label{eq:mult_GW}
\Prob\left[X_{v,j} = n_{j} \text{ for all } j \in [m]\big|\sigma(v) = i\right] = \chi_{i}(n_{1}, \ldots, n_{m}).
\end{equation}
\end{defn}
We refer the reader to [\cite{athreya_ney}, Chapter V, Pages 181-228] and [\cite{karlin_taylor}, Chapter 8, Pages 392-442] for further reading.

\begin{defn}\label{defn:games}
%We now define the versions of the games that we study in this paper. To this end, 
For each $j \in [m]$, we fix a non-empty, proper subset $S_{j}$ of $[m]$. As previously mentioned, every edge $(u,v)$ of the rooted tree is directed from $u$ to $v$ where $u$ is the parent of $v$.
\begin{enumerate}
\item We define a directed edge $(u,v)$ to be permissible for P1 / Stopper if $\sigma(v) \in S_{\sigma(u)}$. 
\item We define a directed edge $(u,v)$ to be permissible for P2 / Escaper if $\sigma(v) \in [m] \setminus S_{\sigma(u)}$.
\end{enumerate}
In each of these games, in every round, the player makes sure to move along a directed edge permissible for her. The outcome of each game is decided via the same rules as outlined in \S\ref{subsec:simple_version}.
\end{defn}

We consider a couple of examples. In the first, we set $S_{j} = \{j\}$ for each $j \in [m]$. This means that P1 and Stopper are allowed to move only along \emph{monochromatic} directed edges, i.e.\ $(u,v)$ with $\sigma(u) = \sigma(v)$, whereas P2 and Escaper are allowed to move only along \emph{non-monochromatic} directed edges, i.e.\ $(u,v)$ with $\sigma(u) \neq \sigma(v)$. Note that swapping these rules for Stopper and Escaper yields a different escape game altogether.

The second example can be thought of as a generalization of the first. Fix a non-empty, proper subset $S$ of $[m]$. For each $j \in S$, we set $S_{j} = S$, and for each $j \in [m] \setminus S$, we set $S_{j} = [m] \setminus S$. Thus P1 and Stopper are allowed to move only along edges $(u,v)$ such that \emph{either} $\sigma(u)$ and $\sigma(v)$ both belong to $S$, \emph{or} they both belong to $[m] \setminus S$, whereas P2 and Escaper are allowed to move along edges $(u,v)$ such that \emph{either} $\sigma(u) \in S$ and $\sigma(v) \in [m] \setminus S$, \emph{or} $\sigma(u) \in [m] \setminus S$ and $\sigma(v) \in S$.

\subsection{Notation}\label{subsec:notation}
Given a rooted tree $T$, we let $V(T)$ indicate the vertex set of $T$ and $\phi$ its root. Given $v \in V(T)$, we denote by $T(v)$ the subtree of $T$ comprising $v$ and all its descendants. For $n \in \mathbb{N}$, we let $[n]$ denote the set $\{1, \ldots, n\}$. 

Given tuples $(x_{1}, \ldots, x_{n})$ and $(y_{1}, \ldots, y_{n})$ in $\mathbb{R}^{n}$, for some $n \in \mathbb{N}$, we write $(x_{1}, \ldots, x_{n}) \preceq (y_{1}, \ldots, y_{n})$ if $x_{i} \leqslant y_{i}$ for each $i \in [n]$. Given a function $f: [0,1]^{n} \rightarrow [0,1]^{n}$, we denote by $\FP(f) = \left\{\mathbf{x} \in [0,1]^{n}: f(\mathbf{x}) = \mathbf{x}\right\}$ the set of all fixed points of $f$ in $[0,1]^{n}$. We define $\min \FP(f)$, if it exists, to be the unique $(x_{1}, \ldots, x_{n}) \in \FP(f)$ such that $(x_{1}, \ldots, x_{n}) \preceq (y_{1}, \ldots, y_{n})$ for all $(y_{1}, \ldots, y_{n}) \in \FP(f)$. Likewise, we define $\max \FP(f)$, if it exists, to be the unique $(x_{1}, \ldots, x_{n}) \in \FP(f)$ such that $(y_{1}, \ldots, y_{n}) \preceq (x_{1}, \ldots, x_{n})$ for all $(y_{1}, \ldots, y_{n}) \in \FP(f)$. We also denote by $f^{(n)}$ the $n$-fold composition of $f$ with itself. Given any real-valued $f$ defined on some domain $E$ of $\mathbb{R}^{n}$, we denote by $\partial_{i} f(x_{1}, \ldots, x_{n})$ the partial derivative $\frac{\partial}{\partial x_{i}} f(x_{1}, \ldots, x_{n})$ for each $i \in [n]$.

Given a non-empty, proper subset $S$ of $[n]$ and tuples $\mathbf{x}_{S} = (x_{i}: i \in S) \in \mathbb{R}^{|S|}$ and $\mathbf{y}_{[n] \setminus S}= (y_{i}: i \in [n] \setminus S) \in \mathbb{R}^{n-|S|}$, we let $\left(\mathbf{x}_{S} \vee \mathbf{y}_{[n] \setminus S}\right)$ denote their concatenation, i.e.\ the tuple $(z_{1}, \ldots, z_{n})$ where $z_{i} = x_{i}$ for each $i \in S$ and $z_{j} = y_{j}$ for each $j \in [n] \setminus S$. We denote by $\mathbf{1}_{S}$ the $|S|$-tuple in which each coordinate equals $1$, and by $\mathbf{0}_{S}$ the $|S|$-tuple in which each coordinate equals $0$.

For each $j \in [m]$, we let $G_{j}$ denote the pgf of $\chi_{j}$ (see Definition~\ref{defn:mult_GW}), i.e.\
\begin{equation}\label{eq:pgf}
G_{j}(x_{1}, \ldots, x_{m}) = \sum_{n_{1}, \ldots, n_{m} \in \mathbb{N}_{0}} \prod_{i=1}^{m} x_{i}^{n_{i}} \chi_{j}(n_{1}, \ldots, n_{m}), \text{ for all } (x_{1}, \ldots, x_{m}) \in [0,1]^{m}.
\end{equation}
Given any subset $S$ of $[m]$ and any $\mathbf{n}_{S} = \left(n_{k}: k \in S\right) \in \mathbb{N}_{0}^{|S|}$, we define the probability distribution
\begin{equation}\label{eq:dist_partial}
\chi_{j,S}\left(\mathbf{n}_{S}\right) = \Prob\left[X_{v,k} = n_{k} \text{ for all } k \in S\big|\sigma(v) = j\right], 
\end{equation}
where recall from Definition~\ref{defn:mult_GW} that $X_{v,k}$ indicates the number of children of $v$ that are of colour $k$ for any $k \in [m]$. We denote by $G_{j, S}$ the pgf corresponding to $\chi_{j,S}$. 

\subsection{Main results and organization of the paper}\label{subsec:main_results} 
The first of our main results expresses the probabilities of win / loss for each player, in each of the three games, as fixed points of suitable multivariable functions. These functions turn out to be compositions of translates of the pgfs $G_{j,S_{j}}$ and $G_{j,[m] \setminus S_{j}}$ for $j \in [m]$, where recall the subsets $S_{j}$ from Definition~\ref{defn:games}.

Before we state the theorem, we introduce the definitions of certain subsets of vertices of the tree $\mathcal{T} = \mathcal{T}_{[m] , \mathbf{p}, \bm{\chi}}$ described in Definition~\ref{defn:mult_GW}. For ease of exposition, we let `N', `M' and `E' indicate the normal, mis\`{e}re and escape games respectively, whereas `W', `L' and `D' denote, respectively, the outcomes of win, loss and draw for any particular player. For each $i = 1, 2$ and $j \in [m]$,
\begin{enumerate}
\item let $\NW_{i,j}$ comprise all $v \in V(\mathcal{T})$ with $\sigma(v) = j$, such that if $v$ is the initial vertex and P$i$ plays the first round of the normal game, then P$i$ wins;
\item let $\NL_{i,j}$ comprise all $v \in V(\mathcal{T})$ with $\sigma(v) = j$, such that if $v$ is the initial vertex and P$i$ plays the first round of the normal game, then P$i$ loses;
\item let $\ND_{i,j}$ comprise all $v \in V(\mathcal{T})$ with $\sigma(v) = j$, such that if $v$ is the initial vertex and P$i$ plays the first round, then the normal game results in a draw. 
\end{enumerate}
We analogously define $\MW_{i,j}$, $\ML_{i,j}$ and $\MD_{i,j}$ for the mis\`{e}re game. For the escape game, we let
\begin{enumerate}
\item $\ESW_{j}$ comprise all $v$ with $\sigma(v) = j$, such that if $v$ is the initial vertex and Stopper plays the first round, she wins;
\item $\EEL_{j}$ comprise all $v$ with $\sigma(v) = j$, such that if $v$ is the initial vertex and Escaper plays the first round, she loses.
\end{enumerate}

We define $\nw_{i,j}$ to be the probability that the root $\phi$ of $\mathcal{T}$ belongs to $\NW_{i,j}$. Analogously, we define $\nl_{i,j}$, $\nd_{i,j}$, $\mw_{i,j}$, $\ml_{i,j}$, $\md_{i,j}$, $\esw_{j}$ and $\eel_{j}$. We let $\bnw_{i} = (\nw_{i,j}: j \in [m])$, and likewise, $\bnl_{i}$, $\bmw_{i}$, $\bml_{i}$, $\besw$ and $\beel$. We also let $\alpha_{j}$ (respectively $\beta_{j}$) denote the probability that $\phi$ has no child of colour $k$ for any $k \in S_{j}$ (respectively any $k \in [m] \setminus S_{j}$), conditioned on $\sigma(\phi) = j$. %We assume, in Theorem~\ref{thm:main_1}, that each game begins with $\phi$ as the initial vertex. 
\begin{theorem}\label{thm:main_1}
In case of the normal game, we have
\begin{equation}
\bnw_{1} = \min \FP(F_{N}) \quad \text{and} \quad \bnl_{1} = \mathbf{1}_{[m]} - \max \FP(F_{N}), 
\end{equation}
%and on the other, we have
\begin{equation}
\bnw_{2} = \min \FP(\F_{N}) \quad \text{and} \quad \bnl_{2} = \mathbf{1}_{[m]} - \max \FP(\F_{N}),
\end{equation}
where the functions $F_{N}(x_{1}, \ldots, x_{m}) = \left(F_{N,j}(x_{1}, \ldots, x_{m}): j \in [m]\right)$ and $\F_{N}(x_{1}, \ldots, x_{m}) = \left(\F_{N,j}(x_{1}, \ldots, x_{m}): j \in [m]\right)$, mapping from $[0,1]^{m}$ to itself, are given by
\begin{multline}\label{F_{N,j}_script_F_{N,j}_defns}
F_{N,j}(x_{1}, \ldots, x_{m}) = 1 - G_{j, S_{j}}\left(1 - G_{k, [m] \setminus S_{k}}\left(x_{\ell}: \ell \in [m] \setminus S_{k}\right): k \in S_{j}\right) \\ \text{ and } \F_{N,j}(x_{1}, \ldots, x_{m}) = 1 - G_{j, [m] \setminus S_{j}}\left(1 - G_{k,S_{k}}\left(x_{\ell}: \ell \in S_{k}\right): k \in [m] \setminus S_{j}\right).
\end{multline}

In case of the mis\`{e}re game, we have 
\begin{equation}
\bmw_{1} = \min \FP(F_{M}) \quad \text{and} \quad \bml_{1} = \mathbf{1}_{[m]} - \max \FP(F_{M}),
\end{equation}
%and on the other, we have
\begin{equation}
\bmw_{2} = \min \FP(\F_{M}) \quad \text{and} \quad \bml_{2} = \mathbf{1}_{[m]} - \max \FP(\F_{M}),
\end{equation}
where the functions $F_{M}(x_{1}, \ldots, x_{m}) = \left(F_{M,j}(x_{1}, \ldots, x_{m}): j \in [m]\right)$ and $\F_{M}(x_{1}, \ldots, x_{m}) = \left(\F_{M,j}(x_{1}, \ldots, x_{m}): j \in [m]\right)$, mapping from $[0,1]^{m}$ to itself, are given by
\begin{multline}\label{F_{M,j}_script_F_{M,j}_defns}
F_{M,j}(x_{1}, \ldots, x_{m}) = \alpha_{j} + 1 - G_{j,S_{j}}\left(1 - G_{k,[m] \setminus S_{k}}\left(x_{\ell}: \ell \in [m] \setminus S_{k}\right) + \beta_{k}: k \in S_{j}\right) \text{ and }\\ \F_{M,j} = \beta_{j} + 1 - G_{j,[m] \setminus S_{j}}\left(1 - G_{k,S_{k}}\left(x_{\ell}: \ell \in S_{k}\right) + \alpha_{k}: k \in [m] \setminus S_{j}\right).
\end{multline}

In case of the escape game, we have
\begin{equation}\label{escape_min_fixed_point}
\besw = \min \FP(F_{E}) \quad \text{and} \quad \beel = \mathbf{1}_{[m]} - \max \FP(\F_{E}),
\end{equation}
where the functions $F_{E}(x_{1}, \ldots, x_{m}) = \left(F_{E,j}(x_{1}, \ldots, x_{m}): j \in [m]\right)$ and $\F_{E}(x_{1}, \ldots, x_{m}) = \left(\F_{E,j}(x_{1}, \ldots, x_{m}): j \in [m]\right)$, mapping from $[0,1]^{m}$ to itself, are given by
\begin{multline}\label{F_{E,j}_script_F_{E,j}_defns}
F_{E,j}(x_{1}, \ldots, x_{m}) = \alpha_{j} + 1 - G_{j,S_{j}}\left(1 - G_{k, [m] \setminus S_{k}}\left(x_{\ell}: \ell \in [m] \setminus S_{k}\right): k \in S_{j} \right) \text{ and} \\ \F_{E,j}(x_{1}, \ldots, x_{m}) = 1 - G_{j, [m] \setminus S_{j}}\left(\alpha_{k} + 1 - G_{k,S_{k}}\left(x_{\ell}: \ell \in S_{k}\right): k \in [m] \setminus S_{j}\right).
\end{multline}
\end{theorem}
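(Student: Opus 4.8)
The plan is to first derive, for each of the three games and each choice of which player moves first, a \emph{one-step recursion} relating the outcome of the game started at a vertex $v$ to the outcomes of the games started at the children of $v$ \emph{with the opposing player moving first}. This produces a coupled system of equations whose substitution collapses to the stated fixed-point identities. The second half of the argument upgrades ``is a fixed point'' to ``is the minimal (resp.\ maximal) fixed point'' by introducing finite-round truncations of the games and invoking monotone convergence.

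Concretely, fix $v$ with $\sigma(v)=j$ and suppose P1 moves first in the normal game. P1's permissible moves go exactly to the children $w$ with $\sigma(w)\in S_{j}$, after which P2 moves first from $w$; hence, under optimal play, $v\in\NW_{1,j}$ iff some such child, say of colour $k\in S_{j}$, lies in $\NL_{2,k}$, while $v\in\NL_{1,j}$ iff \emph{every} child whose colour lies in $S_{j}$ (vacuously so if there is none) lies in $\NW_{2,\sigma(w)}$, and $v\in\ND_{1,j}$ otherwise. Conditioning on $\sigma(v)=j$ and then on the offspring counts $(X_{v,k})_{k\in S_{j}}$, which are governed by $\chi_{j,S_{j}}$ with pgf $G_{j,S_{j}}$, and using that distinct children root independent copies of $\mathcal{T}$, these become $\nw_{1,j}=1-G_{j,S_{j}}(1-\nl_{2,k}:k\in S_{j})$ and $1-\nl_{1,j}=1-G_{j,S_{j}}(\nw_{2,k}:k\in S_{j})$, together with the mirror identities obtained by interchanging the roles of the players (so $S_{k}$ becomes $[m]\setminus S_{k}$). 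Substituting the P2-identities into the P1-identities, and conversely, yields $\bnw_{1}\in\FP(F_{N})$, $\mathbf{1}_{[m]}-\bnl_{1}\in\FP(F_{N})$, $\bnw_{2}\in\FP(\F_{N})$ and $\mathbf{1}_{[m]}-\bnl_{2}\in\FP(\F_{N})$. For the mis\`{e}re game the sole difference is that failing to move is now a \emph{win}; tracking the (disjoint) event ``P1 has no permissible move'' and excising the empty-offspring case from the universally quantified event inserts exactly the corrections $\alpha_{j}$ and $\beta_{k}$ appearing in $F_{M,j}$, and similarly for $\F_{M,j}$. For the escape game the rule ``if either player fails to move then Stopper wins'' produces the $\alpha_{j}$ in $F_{E,j}$ and the $\alpha_{k}$ inside $\F_{E,j}$; here there is no draw class, so Escaper's loss probability from a given position is simply the complement of Stopper's win probability from it.

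Next I would record the structural facts driving the monotone-convergence step: each of $F_{N},\F_{N},F_{M},\F_{M},F_{E},\F_{E}$ maps $[0,1]^{m}$ into itself, is continuous there, and is coordinatewise non-decreasing --- every pgf $G_{j,S}$ has non-negative coefficients, hence is non-decreasing on $[0,1]^{|S|}$, and each coordinate function is assembled from pgfs in the pattern $\mathbf{x}\mapsto c-G_{j,\cdot}\bigl(c'-G_{\cdot,\cdot}(\mathbf{x})+(\text{const})\bigr)$, in which the two sign reversals cancel. I would then truncate: for each $n$, consider the variant of the game in which play is stopped after $n$ rounds, a stop without anyone having failed to move being declared a draw (for the escape game, an Escaper win). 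Running the one-step analysis over a single round expresses the round-$(n+1)$ probability for the player to move in terms of the round-$n$ probabilities for the opponent; composing two consecutive such steps shows that the round-$2n$ truncated vector of P1's normal-game win probabilities equals $F_{N}^{(n)}(\mathbf{0}_{[m]})$ (the round-$0$ vector being $\mathbf{0}_{[m]}$), that the round-$2n$ truncated vector of P1's ``does-not-lose'' probabilities equals $F_{N}^{(n)}(\mathbf{1}_{[m]})$ (round-$0$ vector $\mathbf{1}_{[m]}$), and likewise for P2 and for the mis\`{e}re and escape truncations with their respective functions and starting vectors.

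Finally I would pass to the limit. On the event that a given player wins (resp.\ loses; resp.\ that Stopper wins the escape game), this is decided after finitely many rounds, so the truncated win events increase to the corresponding win event and the truncated ``does-not-lose'' events decrease to the ``does-not-lose'' event; continuity of measure then gives that the truncated win-probability vectors increase to $\bnw_{1}$ (resp.\ $\bnw_{2}$, $\bmw_{1}$, $\bmw_{2}$, $\besw$) and the truncated ``does-not-lose'' vectors decrease to $\mathbf{1}_{[m]}-\bnl_{1}$ and its analogues. On the analytic side, monotonicity of $F_{N}$ together with $\mathbf{0}_{[m]}\preceq F_{N}(\mathbf{0}_{[m]})$ forces $F_{N}^{(n)}(\mathbf{0}_{[m]})$ to increase to a limit $L$, which is a fixed point by continuity and satisfies $L\preceq\mathbf{p}$ for every $\mathbf{p}\in\FP(F_{N})$ (apply $F_{N}^{(n)}$ to $\mathbf{0}_{[m]}\preceq\mathbf{p}$ and let $n\to\infty$), so $L=\min\FP(F_{N})$; dually $\mathbf{1}_{[m]}\succeq F_{N}(\mathbf{1}_{[m]})$ forces $F_{N}^{(n)}(\mathbf{1}_{[m]})\downarrow\max\FP(F_{N})$. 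Matching the two descriptions of the limits gives $\bnw_{1}=\min\FP(F_{N})$ and $\bnl_{1}=\mathbf{1}_{[m]}-\max\FP(F_{N})$, and every remaining identity in the theorem follows in the same way. I expect the principal difficulty to be the careful bookkeeping in the one-step recursions --- in particular getting the $\alpha_{j}$ and $\beta_{j}$ corrections for the mis\`{e}re and escape functions exactly right (keeping ``cannot move'' disjoint from ``has a good move'', and removing the vacuous empty-offspring case from universally quantified events), and tracking the alternation of turns so that two rounds of a game correspond to precisely one application of the relevant function.
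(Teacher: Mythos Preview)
Your proposal is correct and follows essentially the same route as the paper: derive one-step recursions between the two players' outcome probabilities, compose them to obtain the two-step maps $F_{N},\F_{N},F_{M},\F_{M},F_{E},\F_{E}$, introduce finite-round truncations that iterate these maps from $\mathbf{0}_{[m]}$ and $\mathbf{1}_{[m]}$, and use coordinatewise monotonicity to identify the limits as the extremal fixed points. The one place the paper does more work than you flag is your assertion that ``on the event that a given player wins \ldots\ this is decided after finitely many rounds'': the paper isolates this as a separate compactness lemma (for each game) and proves it by exhibiting, for any $v$ in the putative set $\NW_{1,j}\setminus\bigcup_{n}\NW_{1,j}^{(n)}$, an explicit drawing strategy for the opponent, so you should not regard this step as automatic bookkeeping.
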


Our next couple of results pertain to two very fascinating examples: the \emph{bi-type binary GW tree} and the \emph{bi-type Poisson GW tree}. In each of these set-ups, the vertices get assigned one of the colours blue (indicated by $b$) and red (indicated by $r$). We set $S_{b} = \{b\}$ and $S_{r} = \{r\}$, whereby P1 and Stopper are allowed to move only along monochromatic directed edges, and P2 and Escaper only along non-monochromatic directed edges. 

The offspring distributions in case of the bi-type binary tree are described as follows:
\begin{enumerate}
\item given $\sigma(v) = b$, $v$ has no child with probability $p_{0}$, two blue children with probability $p_{\blbl}$, two red children with probability $p_{\rr}$, and one red and one blue child with probability $p_{\br}$,
\item given $\sigma(v) = r$, $v$ has no child with probability $q_{0}$, two blue children with probability $q_{\blbl}$, two red children with probability $q_{\rr}$, one red and one blue child with probability $q_{\br}$.
\end{enumerate}
In the bi-type Poisson tree, each vertex $v$, irrespective of its colour, gives birth to $\poi(\lambda)$ offspring in total, and
\begin{enumerate}
\item conditioned on $\sigma(v) = b$, a child of $v$ is assigned, independent of all else, colour $b$ with probability $p_{b}$, and colour $r$ with probability $p_{r} = 1-p_{b}$;
\item conditioned on $\sigma(v) = r$, a child of $v$ is assigned, independent of all else, colour $b$ with probability $q_{b}$, and colour $r$ with probability $q_{r} = 1-q_{b}$.
\end{enumerate}

\begin{theorem}\label{thm:main_example_1}
When the normal game is played on the bi-type binary GW tree, for each $i = 1, 2$, the draw probabilities $\nd_{i,b}$ and $\nd_{i,r}$ both equal $1$ if $p_{\br} = q_{\br} = 1$. Otherwise, $\nd_{i,b} = \nd_{i,r} = 0$. Analogous conclusions hold for the mis\`{e}re game. In case of the escape game, $\esl_{b} = \esl_{r} = 1$ when $p_{\br} = q_{\br} = 1$, and $\esl_{b} = \esl_{r} = 0$ otherwise.
\end{theorem}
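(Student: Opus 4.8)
The plan is to specialize Theorem~\ref{thm:main_1} to $m=2$, $S_b=\{b\}$, $S_r=\{r\}$, and to exploit the fact that in the binary set-up the maps appearing there \emph{decouple}. Writing $\phi(y)=1-G_{b,\{b\}}(1-G_{b,\{r\}}(y))$ and $\psi(y)=1-G_{r,\{r\}}(1-G_{r,\{b\}}(y))$, a direct substitution into the displayed formulas of Theorem~\ref{thm:main_1} gives $F_N(x_b,x_r)=(\phi(x_r),\psi(x_b))$, so $(x_b,x_r)\mapsto x_b$ is a bijection from $\FP(F_N)$ onto $\FP(\phi\circ\psi)$. Since $\phi,\psi$ are nondecreasing on $[0,1]$, it follows that $F_N$ has a unique fixed point iff $\phi\circ\psi$ does, in which case $\min\FP(F_N)=\max\FP(F_N)$ and hence $\nd_{1,b}=\nd_{1,r}=0$. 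The misère game works the same way with $\phi,\psi$ replaced by $\tilde\phi(y)=(p_0+p_{\rr})+1-G_{b,\{b\}}(1-p_{\br}y-p_{\rr}y^2)$ and the colour-exchanged $\tilde\psi$; for the escape game, with $\Phi=\alpha_b+\phi$ and $\Psi=\alpha_r+\psi$ one has $F_E(x_b,x_r)=(\Phi(x_r),\Psi(x_b))$, the analogous bijection $\FP(F_E)\to\FP(\Phi\circ\Psi)$, and the blue coordinate of $\besw=\min\FP(F_E)$ equal to $\min\FP(\Phi\circ\Psi)$. The useful feature throughout is that the blue-side maps involve only $\chi_b$ (the $p$'s) and the red-side maps only $\chi_r$ (the $q$'s).

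First I would dispose of the case $p_{\br}=q_{\br}=1$. Then every offspring law is deterministic, all of $G_{b,\{b\}},G_{b,\{r\}},G_{r,\{r\}},G_{r,\{b\}}$ are the identity, and $\alpha_b=\alpha_r=0$, so $\phi=\psi=\tilde\phi=\tilde\psi=\mathrm{id}$. Hence $F_N=F_M$ (and, by the same computation, $\F_N=\F_M$) is the coordinate swap $(x_b,x_r)\mapsto(x_r,x_b)$, whose fixed-point set is the diagonal $\{(t,t):t\in[0,1]\}$, giving $\nd_{i,j}=\md_{i,j}=1$; and $F_E=\mathrm{id}$, so $\min\FP(F_E)=(0,0)$ and $\esl_b=\esl_r=1$. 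This is exactly the regime in which no vertex is ever a leaf and each player has exactly one legal move at every turn, so play is infinite.

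For the remaining regime ($p_{\br}<1$ or $q_{\br}<1$), the crux is the derivative estimate: $\phi'\le 1$ on $[0,1]$ always, and $\phi'<1$ on $[0,1]$ whenever $p_{\br}<1$ — and likewise for $\tilde\phi$, and, with the $q$'s in place of the $p$'s, for $\psi$ and $\tilde\psi$. Here $\phi'(y)=G_{b,\{b\}}'(1-G_{b,\{r\}}(y))\,G_{b,\{r\}}'(y)$, and since $G_{b,\{b\}}'$ and $G_{b,\{r\}}'$ are nondecreasing on $[0,1]$ while the arguments $1-G_{b,\{r\}}(y)$ and $y$ lie in $[0,1]$, we get $\phi'(y)\le G_{b,\{b\}}'(1)\,G_{b,\{r\}}'(1)=(p_{\br}+2p_{\blbl})(p_{\br}+2p_{\rr})$. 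The structural point is that, each vertex having $0$ or $2$ children, these two numbers are the means of the two colour-marginals of $\chi_b$ and sum to $2(1-p_0)\le 2$, so their product is at most $1$ by AM--GM; this gives $\phi'\le 1$. For strictness when $p_{\br}<1$: if the AM--GM inequality is strict there is nothing more to do, whereas its equality case forces $p_{\br}+2p_{\blbl}=p_{\br}+2p_{\rr}=1$ and $p_0=0$, i.e.\ $p_{\blbl}=p_{\rr}=(1-p_{\br})/2>0$, which is then ruled out by inspecting the two factors pointwise on $[0,1]$. For $\tilde\phi$ one repeats the computation with $1-G_{b,\{r\}}(y)$ replaced by $1-p_{\br}y-p_{\rr}y^2$, the two factors being $p_{\br}+2p_{\rr}y$ and $p_{\br}+2p_{\blbl}(1-p_{\br}y-p_{\rr}y^2)$, again bounded by $(p_{\br}+2p_{\rr})(p_{\br}+2p_{\blbl})$.

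Granting the estimate, the three games finish quickly. For the normal and misère games, $(\phi\circ\psi)'(y)=\phi'(\psi(y))\psi'(y)<1$ for every $y$ (one factor is $<1$, the other $\le 1$), so $y\mapsto(\phi\circ\psi)(y)-y$ is strictly decreasing and $\phi\circ\psi$ has a unique fixed point; hence $\min\FP(F_N)=\max\FP(F_N)$, giving $\nd_{1,b}=\nd_{1,r}=0$, and identically $\md_{1,b}=\md_{1,r}=0$ via $\tilde\phi,\tilde\psi$. Since a draw in the P2-first game started at a vertex $v$ requires some child $w$ of $v$ from which the P1-first game is a draw, a union bound over the at most two children of $v$ then yields $\nd_{2,b}=\nd_{2,r}=0$ and $\md_{2,b}=\md_{2,r}=0$. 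For the escape game, $\Phi=\alpha_b+\phi$ has $\Phi(1)=(p_0+p_{\rr})+(p_{\blbl}+p_{\br})=1$ and $\Phi'=\phi'\le 1$, so $\Phi(z)\ge z$ on $[0,1]$ with equality only at $z=1$ when $p_{\br}<1$ (and $\Phi=\mathrm{id}$ when $p_{\br}=1$); the same holds for $\Psi$; composing, $(\Phi\circ\Psi)(y)\ge\Psi(y)\ge y$ with at least one of the two steps strict for $y<1$, so $\FP(\Phi\circ\Psi)=\{1\}$ and $\min\FP(F_E)=(1,1)$, giving $\esw_b=\esw_r=1$ and $\esl_b=\esl_r=0$. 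The hard part of this plan is the derivative estimate — in particular, verifying that $\phi'$ and $\tilde\phi'$ can equal $1$ somewhere on $[0,1]$ only when $p_{\br}=1$: the mean/AM--GM bound alone leaves a borderline equality configuration that must be eliminated by a direct pointwise look at the relevant two factors, and this bookkeeping has to be carried out for $\tilde\phi$ as well.
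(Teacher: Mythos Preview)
Your proposal is correct and takes a genuinely different, considerably cleaner route than the paper. The paper subtracts the win- and loss-recursions to obtain an identity of the form $\nd_{1,b}=\nd_{1,r}\cdot P(\nw_{1,r},\nl_{1,r})$ for an explicit quartic $P$, and then shows $P<1$ (unless $p_{\br}=1$) by a lengthy optimisation over the simplex $\{p_{\br}+p_{\blbl}+p_{\rr}\le 1\}$: case splits on the sign of an auxiliary quantity, location of interior critical points of a two-variable polynomial, and a boundary sweep. The same programme is repeated for the mis\`{e}re and escape games. Your argument instead exploits the decoupling $F_{N}(x_{b},x_{r})=(\phi(x_{r}),\psi(x_{b}))$ and reduces everything to the single derivative bound $\phi'(y)\le G_{b,\{b\}}'(1)\,G_{b,\{r\}}'(1)=(p_{\br}+2p_{\blbl})(p_{\br}+2p_{\rr})\le 1$, the last step being AM--GM on two nonnegative numbers summing to $2(1-p_{0})\le 2$; the strictness when $p_{\br}<1$ follows because in the AM--GM equality configuration ($p_{0}=0$, $p_{\blbl}=p_{\rr}>0$) the two factors of $\phi'$ attain their maxima at $y=0$ and $y=1$ respectively and cannot do so simultaneously. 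What your approach buys is a transparent structural explanation (the binary constraint forces the two colour-marginal means to sum to at most $2$), a uniform treatment of all three games, and the avoidance of any explicit polynomial optimisation. What the paper's approach buys is a quantitative inequality $\nd_{1,b}\le c(\bm{p})\,\nd_{1,r}$ with an explicit $c(\bm{p})<1$, which could in principle be iterated; but this is not used elsewhere. Your handling of the $i=2$ case via the inclusion $\ND_{2,j}\subset\{\text{some permissible child lies in }\ND_{1,\cdot}\}$ is correct and is effectively the same reduction the paper relies on implicitly.
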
 

\begin{theorem}\label{thm:main_example_2}
When $p_{b}$ and $q_{r}$ are constants in $(0,1)$, the draw probabilities $\nd_{i,b}$ and $\nd_{i,r}$, for $i = 1, 2$, of the normal game on the bi-type Poisson GW tree can be brought arbitrarily close to $1$ by making $\lambda$ sufficiently large. Analogous conclusions hold for $\md_{i,b}$ and $\md_{i,r}$ in case of the mis\`{e}re game and for $\esl_{b}$ and $\esl_{r}$ in case of the escape game. If 
\begin{equation}\label{poisson_second_cond}
p_{b} p_{r} q_{b} q_{r} \leqslant \lambda^{-4} \exp\left\{\lambda p_{r} e^{-\lambda q_{r}} + \lambda p_{b} \exp\left\{-\lambda p_{r} \exp\left\{-\lambda q_{r} e^{-\lambda q_{b}}\right\}\right\} + \lambda q_{r} e^{-\lambda q_{b}}\right\},
\end{equation}
then the above probabilities are all $0$. If
\begin{equation}\label{normal_poisson_cond}
\lambda q_{r} e^{-\lambda q_{b}} \geqslant 1 \text{ and } \lambda p_{b} e^{-\lambda p_{r} e^{-\lambda q_{r} e^{-\lambda q_{b}}}} \geqslant 1, 
\end{equation}
then $\nd_{1,b} = 0$. Analogous results hold for $\nd_{1,r}$, $\nd_{2,b}$ and $\nd_{2,r}$. If
\begin{equation}\label{misere_poisson_cond}
\lambda q_{r} e^{-\lambda q_{b}} \geqslant 1 \text{ and } \lambda p_{b} e^{-\lambda p_{r} \left(1 - e^{-\lambda q_{r}}\right)} \geqslant 1,
\end{equation}
then $\md_{1,b} = 0$. Analogous results hold for $\md_{1,r}$, $\md_{2,b}$ and $\md_{2,r}$. If 
\begin{equation}\label{escape_poisson_cond}
\lambda q_{r} e^{-\lambda q_{b}} \geqslant 1 \text{ and } \lambda p_{b} e^{-\lambda p_{r} \left(e^{-\lambda q_{r} e^{-\lambda q_{b}}} - e^{-\lambda q_{r}}\right)} \geqslant 1,
\end{equation}
then $\esl_{b} = 0$. An analogous result holds for $\esl_{r}$. 
\end{theorem}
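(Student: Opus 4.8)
The plan is to specialise Theorem~\ref{thm:main_1} to the bi-type Poisson setting --- where Poisson thinning turns every pgf into an exponential --- and then to analyse the resulting explicit fixed-point maps by monotone iteration. Conditioned on $\sigma(v)=b$, the numbers of blue and red children of $v$ are independent and $\poi(\lambda p_b)$-, $\poi(\lambda p_r)$-distributed (and $\poi(\lambda q_b)$-, $\poi(\lambda q_r)$-distributed when $\sigma(v)=r$), so for instance
\[
F_{N,b}(x_b,x_r)=1-\exp\{-\lambda p_b\exp\{-\lambda p_r(1-x_r)\}\},\qquad F_{N,r}(x_b,x_r)=1-\exp\{-\lambda q_r\exp\{-\lambda q_b(1-x_b)\}\},
\]
with analogous formulas for $\F_N$ and for $F_M,\F_M,F_E,\F_E$, the latter four differing only through the exponentially small corrections $\alpha_j\in\{e^{-\lambda p_b},e^{-\lambda q_r}\}$, $\beta_j\in\{e^{-\lambda p_r},e^{-\lambda q_b}\}$. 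The decisive structural point is that the $b$-component of each of these six maps depends on $x_r$ only and the $r$-component on $x_b$ only; hence each two-dimensional fixed-point problem reduces to locating the fixed points of the one-dimensional composition $\Psi:=F_{\cdot,b}\circ F_{\cdot,r}$, after which the other coordinate is determined. By Theorem~\ref{thm:main_1} one has $\nd_{i,j}=\bigl(\max\FP-\min\FP\bigr)_j$ and $\md_{i,j}=\bigl(\max\FP-\min\FP\bigr)_j$ for the pertinent maps, and $\esl_j=1-\bigl(\min\FP(F_E)\bigr)_j$; moreover $\mathbf 1$ is automatically a fixed point of $F_E$ (forced by the $\alpha_j$ term), so $\esl_j=0$ is equivalent to $\mathbf 1$ being the \emph{only} fixed point of $F_E$. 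Because of the decoupling, agreement of the two extremal fixed points in one coordinate forces agreement in the other, so each ``$\,=0\,$'' assertion is equivalent to the relevant map having a unique fixed point.

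For the $\lambda\to\infty$ statement (with $p_b,q_r\in(0,1)$ fixed), the inequality $1-e^{-x}\le x$ shows that the box $[0,\tfrac12]^2$ is forward-invariant under each of the six maps once $\lambda$ is large (e.g.\ $F_{N,b}\le\lambda p_b e^{-\lambda p_r/2}\to0$ on it), so the least fixed point $\lim_n F^{(n)}(\mathbf 0)$ lies in it and in fact has each coordinate at most a quantity tending to $0$; hence $\min\FP\to\mathbf 0$ for each map, giving $\nw_{i,\cdot},\mw_{i,\cdot},\esw_{\cdot}\to0$ (so $\esl_\cdot\to1$). For the greatest fixed point, pass to the coordinates $1-x_b=e^{-A}$, $1-x_r=e^{-C}$: the fixed-point equations become $A=\lambda p_b e^{-\lambda p_r e^{-C}}$, $C=\lambda q_r e^{-\lambda q_b e^{-A}}$ up to the exponentially small corrections, and the box $[\tfrac12\lambda p_b,\lambda p_b]\times[\tfrac12\lambda q_r,\lambda q_r]$ is forward-invariant for large $\lambda$, so the greatest fixed point has $A\ge\tfrac12\lambda p_b\to\infty$, i.e.\ $1-x_b\le e^{-\lambda p_b/2}\to0$; thus $\max\FP\to\mathbf 1$ (and $\max\FP(F_E)=\mathbf 1$ always). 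Combining, $\nd_{i,j},\md_{i,j}\to1$ and the probability that Escaper wins tends to $1$.

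The ``$\,=0\,$'' statements come from establishing uniqueness of the relevant fixed point. The transparent case is \eqref{poisson_second_cond}: every fixed point $(x_b^*,x_r^*)$ of $F_N$ satisfies $x_r^*=F_{N,r}(x_b^*)$, hence $1-x_r^*\ge e^{-\lambda q_r}$ and $1-x_r^*\le e^{-\lambda q_r e^{-\lambda q_b}}$, and then $1-x_b^*=e^{-\lambda p_b e^{-\lambda p_r(1-x_r^*)}}\le e^{-\lambda p_b e^{-\lambda p_r e^{-\lambda q_r e^{-\lambda q_b}}}}$; a direct differentiation gives the exact identity
\[
\Psi'(x_b^*)=\lambda^4 p_b p_r q_b q_r\,e^{-\lambda p_r(1-x_r^*)}\,e^{-\lambda q_b(1-x_b^*)}\,(1-x_b^*)(1-x_r^*).
\]
Substituting these bounds (and $e^{-\lambda q_b(1-x_b^*)}<1$ since $x_b^*<1$) gives $\Psi'(x_b^*)<\lambda^4 p_b p_r q_b q_r\exp\{-E\}$ with $E$ exactly the exponent in \eqref{poisson_second_cond}, so \eqref{poisson_second_cond} forces $\Psi'<1$ at every fixed point. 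Since a $C^1$ self-map of $[0,1]$ that is positive at $0$, below the identity at $1$, and has derivative $<1$ at each of its fixed points has exactly one fixed point, $F_N$ has a unique fixed point and $\nd_{1,b}=\nd_{1,r}=0$. The same scheme applies to $\F_N$ (giving $\nd_{2,\cdot}=0$), to $F_M,\F_M$ (giving $\md_{\cdot}=0$; the $\alpha_j,\beta_j$ terms enter the localisation), and to $F_E,\F_E$ (giving $\esl_\cdot=0$, where one shows the iterates of $F_E$ from $\mathbf 0$ climb to $\mathbf 1$), once one checks that the single inequality \eqref{poisson_second_cond} implies the correspondingly nested exponential inequality needed for each of the six maps. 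For \eqref{normal_poisson_cond} (and \eqref{misere_poisson_cond}, \eqref{escape_poisson_cond}) one runs a sharper version: the first clause $\lambda q_r e^{-\lambda q_b}\ge1$ together with the second clause forces, via two monotone iterates from $\mathbf 0$, that every fixed point of the relevant map lies in $[1-e^{-1},1]^2$, and there the second clause is calibrated so that $\Psi$ is a strict contraction (equivalently $\sup|F_{\cdot,b}'|<1$ and $\sup|F_{\cdot,r}'|<1$ on the pertinent sub-intervals); the three games produce three different second clauses because the factor $1-x_b^*=G_{b,\{b\}}(\cdots)$ carries a different inner argument --- plain, $+\beta_k$, or with the $\alpha_j$ shift --- in $F_{N,b}$, $F_{M,b}$, $F_{E,b}$.

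The structural reductions and the $\lambda\to\infty$ limits are routine once thinning is in place; the crux is the sharp bookkeeping in the last step. The crude estimate $\Psi'\le\lambda^4 p_b p_r q_b q_r$ only yields $\lambda^4 p_b p_r q_b q_r<1$, and getting down to \eqref{poisson_second_cond} --- let alone the milder \eqref{normal_poisson_cond}--\eqref{escape_poisson_cond} --- requires squeezing each factor of the explicit derivative product by a correctly nested localisation of the fixed points, then verifying (for \eqref{poisson_second_cond}) that one exponent dominates all six player/game variants and (for the other three) carrying the corrections $\alpha_j,\beta_j$ and the automatic fixed point $\mathbf 1$ of $F_E$ through without degrading the bound.
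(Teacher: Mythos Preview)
Your structural reductions are correct and match the paper: Poisson thinning gives exponential pgfs, the $b$- and $r$-coordinates decouple, and each two-dimensional fixed-point problem collapses to a one-dimensional one. Your treatment of \eqref{poisson_second_cond} is also essentially the paper's: the paper bounds $(f_1\circ f_2)'(x)-1$ globally on $[0,1]$ (dropping the $e^{-\lambda q_b x}$ factor and bounding $f_2(x)$ above and below by its values at $0$ and $1$), which is equivalent to your localisation of the factors at a fixed point. Your $\lambda\to\infty$ argument via forward-invariant boxes is different from the paper's (which exhibits three sign changes of $f_1\circ f_2(x)-x$ by evaluating at two explicit test points $x_1,x_2$), and while your sketch is plausible, the change of variables $1-x_j=e^{-A}$ at the boundary $x_j=1$ and the claim that the box $[\tfrac12\lambda p_b,\lambda p_b]\times[\tfrac12\lambda q_r,\lambda q_r]$ is forward-invariant need more care than you give them.

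The real gap is in your handling of \eqref{normal_poisson_cond}--\eqref{escape_poisson_cond}. You assert that these conditions make $\Psi$ a strict contraction on a sub-box, but this is not what they do, and I do not see how to make a contraction argument work here. For instance, under \eqref{normal_poisson_cond} one has $(f_1\circ f_2)'(1)\leqslant\lambda^3 p_b p_r q_b e^{-2}$ after using both clauses, which is not bounded by $1$ in general. The paper's mechanism is \emph{convexity}, not contraction: it computes $(f_1\circ f_2)''$ explicitly and observes that its sign is that of
\[
\lambda^{2}p_{r}q_{r}\,f_{4}(x)f_{2}(x)\bigl\{\lambda p_{b}f_{3}(f_{2}(x))-1\bigr\}+\bigl\{\lambda q_{r}f_{4}(x)-1\bigr\},
\]
where $f_3(x)=e^{-\lambda p_r x}$, $f_4(x)=e^{-\lambda q_b x}$. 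Both braced terms are decreasing in $x$, and the two clauses of \eqref{normal_poisson_cond} are exactly the statements that they are non-negative at $x=1$; hence $f_1\circ f_2$ is convex on $[0,1)$. A convex function with $f_1\circ f_2(0)>0$ and $f_1\circ f_2(1)<1$ meets the diagonal exactly once, so $\nd_{1,b}=0$. The conditions \eqref{misere_poisson_cond} and \eqref{escape_poisson_cond} arise from the same second-derivative computation applied to the corresponding one-dimensional compositions for $F_M$ and $F_E$, with the $\alpha_j,\beta_j$ corrections shifting the inner argument. You should replace the contraction claim by this convexity argument.
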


The next main result compares the probabilities of outcomes of different games with each other. 
\begin{theorem}\label{thm:main_2}
The following are true:
\begin{enumerate}
\item \label{main_2_part_1} For each $j \in [m]$, we have $\nw_{1,j} \leqslant \esw_{j}$, $\nl_{2,j} \leqslant \eel_{j}$, $\mw_{1,j} \leqslant \esw_{j}$ and $\ml_{2,j} \leqslant \eel_{j}$.

\item \label{main_2_part_2} When $G_{j,S_{j}}$ and $G_{j,[m] \setminus S_{j}}$ are differentiable for any $j \in [m]$, we have 
\begin{multline}\label{main_2_part_2_eq_1}
\esw_{j} \geqslant \alpha_{j} + \min\left\{\eel_{t}: t \in S_{j}\right\}\left(1 - \alpha_{j}\right) + \sum_{k \in S_{j}} \left(\eel_{k} - \min\left\{\eel_{t}: t \in S_{j}\right\}\right)\\ \partial_{k} G_{j,S_{j}}\left(1 - \eel_{t}: t \in S_{j}\right) \geqslant \min\left\{\eel_{t}: t \in S_{j}\right\};
\end{multline}
\begin{multline}
\mw_{1,j} \geqslant \alpha_{j} + \min\left\{\ml_{2,t}: t \in S_{j}\right\}\left(1 - \alpha_{j}\right) + \sum_{k \in S_{j}} \left(\ml_{2,k} - \min\left\{\ml_{2,t}: t \in S_{j}\right\}\right)\\ \partial_{k} G_{j,S_{j}}\left(1 - \ml_{2,t}: t \in S_{j}\right) \geqslant \min\left\{\ml_{2,t}: t \in S_{j}\right\}; %\nonumber
\end{multline} 
\begin{multline}
\mw_{2,j} \geqslant \beta_{j} + \min\left\{\ml_{1,t}: t \in [m] \setminus S_{j}\right\}\left(1 - \beta_{j}\right) + \sum_{k \in [m] \setminus S_{j}} \left(\ml_{1,k} - \min\left\{\ml_{1,t}: t \in [m] \setminus S_{j}\right\}\right)\\ \partial_{k} G_{j,[m] \setminus S_{j}}\left(1 - \ml_{1,t}: t \in [m] \setminus S_{j}\right) \geqslant \min\left\{\ml_{1,t}: t \in [m] \setminus S_{j}\right\}. %\nonumber
\end{multline} 

\item \label{main_2_part_3} Let $G_{j,S_{j}}$ be convex and continuously differentiable, $G_{j,[m] \setminus S_{j}}$ be continuous. If 
\begin{equation}\label{main_2_part_3_cond_1}
\alpha_{j} \leqslant \sum_{i \in S_{j}} \beta_{i} \Prob\left[X_{v,i} = 1, X_{v,k} = 0 \text{ for all } k \in S_{j} \setminus \{i\}\big|\sigma(v) = j\right], 
\end{equation}
for each $j \in [m]$, then $\bnl_{1} \preceq \bml_{1}$. If 
\begin{equation}\label{main_2_part_3_cond_2}
\alpha_{j} \geqslant \sum_{i \in S_{j}} \beta_{i} \E\left[X_{v,i}\big|\sigma(v) = j\right],
\end{equation}
then $\bnw_{1} \preceq \bmw_{1}$. Analogous inequalities hold for $\bnl_{2}$, $\bml_{2}$, $\bnw_{2}$ and $\bmw_{2}$.
\end{enumerate}
\end{theorem}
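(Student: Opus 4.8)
\emph{Proof plan.} The whole theorem rests on Theorem~\ref{thm:main_1} together with two elementary facts. First, a \textbf{monotone fixed-point comparison}: each of $F_{N},F_{M},F_{E},\F_{N},\F_{M},\F_{E}$ is coordinatewise non-decreasing on $[0,1]^{m}$ (a pgf $G_{k,S}$ is non-decreasing, and each coordinate function composes two of them with an even number of reflections $t\mapsto1-t$), so if two such coordinatewise non-decreasing maps satisfy $H\preceq H'$ pointwise, then $\min\FP(H)\preceq\min\FP(H')$ and $\max\FP(H)\preceq\max\FP(H')$ -- proved by comparing the Kleene iterates $H^{(n)}(\mathbf{0}_{[m]})\uparrow\min\FP(H)$ and $H^{(n)}(\mathbf{1}_{[m]})\downarrow\max\FP(H)$ termwise in $n$. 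Second, \textbf{monotone-direction convexity of a pgf}: every entry of the Hessian of a pgf is non-negative (differentiate the monomials), so $t\mapsto G(\mathbf{a}+t\mathbf{w})$ is convex whenever $\mathbf{w}\succeq\mathbf{0}_{[m]}$; hence for $\mathbf{a}\preceq\mathbf{b}$ with $G$ differentiable along the segment one has $\sum_{i}(b_{i}-a_{i})\partial_{i}G(\mathbf{a})\leqslant G(\mathbf{b})-G(\mathbf{a})\leqslant\sum_{i}(b_{i}-a_{i})\partial_{i}G(\mathbf{b})$, and each $\partial_{i}G$ is itself non-decreasing, with value $\E[X_{v,i}\mid\sigma(v)=\,\cdot\,]$ at $\mathbf{1}$ and, at $\mathbf{0}$, the probability that the vertex has exactly one child, of colour $i$.

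For part~\ref{main_2_part_1} I would simply compare the six functions pointwise: since $\alpha_{j},\beta_{j}\geqslant0$ and the $G_{k,S}$ are non-decreasing, $F_{N,j}\leqslant F_{E,j}$ (they differ by the summand $\alpha_{j}$), $F_{M,j}\leqslant F_{E,j}$ (the inner argument of $G_{j,S_{j}}$ is larger by $\beta_{k}$ in $F_{M,j}$), $\F_{E,j}\leqslant\F_{N,j}$ (the inner argument of $G_{j,[m]\setminus S_{j}}$ is larger by $\alpha_{k}$ in $\F_{E,j}$), and $\F_{E,j}\leqslant\F_{M,j}$ (they differ by the summand $\beta_{j}$, the inner arguments being identical). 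The monotone fixed-point comparison, the formulas of Theorem~\ref{thm:main_1}, and the fact that $\mathbf{x}\mapsto\mathbf{1}_{[m]}-\mathbf{x}$ reverses $\preceq$ then give $\nw_{1,j}\leqslant\esw_{j}$, $\mw_{1,j}\leqslant\esw_{j}$, $\nl_{2,j}\leqslant\eel_{j}$ and $\ml_{2,j}\leqslant\eel_{j}$.

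For part~\ref{main_2_part_2} I would first record the one-step recursions $\esw_{j}=\alpha_{j}+1-G_{j,S_{j}}(1-\eel_{k}:k\in S_{j})$, $\mw_{1,j}=\alpha_{j}+1-G_{j,S_{j}}(1-\ml_{2,k}:k\in S_{j})$ and $\mw_{2,j}=\beta_{j}+1-G_{j,[m]\setminus S_{j}}(1-\ml_{1,k}:k\in[m]\setminus S_{j})$, which come from a first-move analysis of the games (Stopper at a colour-$j$ vertex wins iff it has no child of colour in $S_{j}$ or a child of colour in $S_{j}$ that is losing for Escaper-to-move, while $\eel_{k}$ is the chance that \emph{every} child of colour in $[m]\setminus S_{k}$ is winning for Stopper-to-move) and are implicit in the proof of Theorem~\ref{thm:main_1}. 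Fixing $j$ and putting $m_{j}=\min_{t\in S_{j}}\eel_{t}$, $\mathbf{u}=(1-\eel_{k}:k\in S_{j})$ and $\mathbf{v}=(1-m_{j})\mathbf{1}_{S_{j}}$, one has $\mathbf{u}\preceq\mathbf{v}$ and $v_{k}-u_{k}=\eel_{k}-m_{j}$, so the convexity bound gives $G_{j,S_{j}}(\mathbf{u})\leqslant G_{j,S_{j}}(\mathbf{v})-\sum_{k\in S_{j}}(\eel_{k}-m_{j})\partial_{k}G_{j,S_{j}}(\mathbf{u})$, while $1-G_{j,S_{j}}(\mathbf{v})=\E[1-(1-m_{j})^{N_{j}}\mid\sigma(v)=j]\geqslant m_{j}(1-\alpha_{j})$ for $N_{j}=\sum_{k\in S_{j}}X_{v,k}$ (using $1-(1-m_{j})^{n}\geqslant m_{j}$ for $n\geqslant1$). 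Substituting into the recursion for $\esw_{j}$ gives the left inequality; the right one follows from $\alpha_{j}+m_{j}(1-\alpha_{j})=m_{j}+\alpha_{j}(1-m_{j})\geqslant m_{j}$ together with $\partial_{k}G_{j,S_{j}}\geqslant0$. The $\mw_{1,j}$ line is identical with $\eel$ replaced by $\ml_{2}$, and the $\mw_{2,j}$ line is the mirror image under $(S_{j},\alpha)\mapsto([m]\setminus S_{j},\beta)$.

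For part~\ref{main_2_part_3} the starting point is the identity $F_{M,j}(\mathbf{x})-F_{N,j}(\mathbf{x})=\alpha_{j}-\bigl(G_{j,S_{j}}(\mathbf{y}+\boldsymbol\beta)-G_{j,S_{j}}(\mathbf{y})\bigr)$ with $\mathbf{y}=(1-G_{k,[m]\setminus S_{k}}(x_{\ell}:\ell\in[m]\setminus S_{k}):k\in S_{j})$ and $\boldsymbol\beta=(\beta_{k}:k\in S_{j})$, where $\mathbf{0}_{S_{j}}\preceq\mathbf{y}$ and $\mathbf{y}+\boldsymbol\beta\preceq\mathbf{1}_{S_{j}}$ (the latter since $G_{k,[m]\setminus S_{k}}\geqslant G_{k,[m]\setminus S_{k}}(\mathbf{0})=\beta_{k}$). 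The convexity bounds applied at $(\mathbf{a},\mathbf{b})=(\mathbf{y},\mathbf{y}+\boldsymbol\beta)$, together with monotonicity of the $\partial_{k}G_{j,S_{j}}$, squeeze $G_{j,S_{j}}(\mathbf{y}+\boldsymbol\beta)-G_{j,S_{j}}(\mathbf{y})$ between $\sum_{k\in S_{j}}\beta_{k}\,\Prob[X_{v,k}=1,\ X_{v,i}=0\ \forall i\in S_{j}\setminus\{k\}\mid\sigma(v)=j]$ and $\sum_{k\in S_{j}}\beta_{k}\,\E[X_{v,k}\mid\sigma(v)=j]$; hence \eqref{main_2_part_3_cond_2} forces $F_{N}\preceq F_{M}$ on $[0,1]^{m}$ (so $\bnw_{1}=\min\FP(F_{N})\preceq\min\FP(F_{M})=\bmw_{1}$), and \eqref{main_2_part_3_cond_1} forces $F_{M}\preceq F_{N}$ (so $\max\FP(F_{M})\preceq\max\FP(F_{N})$, i.e.\ $\bnl_{1}=\mathbf{1}_{[m]}-\max\FP(F_{N})\preceq\mathbf{1}_{[m]}-\max\FP(F_{M})=\bml_{1}$). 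The P2 statements follow by the same computation after interchanging $S_{j}\leftrightarrow[m]\setminus S_{j}$, $\alpha\leftrightarrow\beta$ and $G_{j,S_{j}}\leftrightarrow G_{j,[m]\setminus S_{j}}$. \textbf{The main obstacle is the bookkeeping} in parts~\ref{main_2_part_2}--\ref{main_2_part_3}: for each of the six functions one must keep straight which inner argument is the larger, which coordinates the outer pgf is differentiated in, and whether a reflection $\mathbf{1}_{[m]}-(\cdot)$ reverses the inequality, while checking that only the automatic monotone-direction convexity is ever invoked -- so that ``differentiable'' (part~\ref{main_2_part_2}) and ``$G_{j,S_{j}}$ continuously differentiable'' (part~\ref{main_2_part_3}, needed to keep $\partial_{k}G_{j,S_{j}}(\mathbf{1}_{S_{j}})$ finite) are exactly the hypotheses the argument uses.
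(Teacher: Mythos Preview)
Your proposal is correct. Parts~\ref{main_2_part_2} and~\ref{main_2_part_3} follow essentially the paper's route: the paper also uses the one-step recursion $\esw_{j}=\alpha_{j}+1-G_{j,S_{j}}(1-\eel_{k}:k\in S_{j})$ and the convexity/monotonicity of $G_{j,S_{j}}$ and its partials, though it packages the bound in~\ref{main_2_part_2} as an elementary telescoping inequality for $1-\prod_{t}(1-x_{t})^{n_{t}}$ rather than invoking convexity abstractly, and in~\ref{main_2_part_3} it phrases the fixed-point comparison as an induction on the truncated probabilities $\bnl_{1}^{(n)},\bmw_{1}^{(n)}$ rather than as a global pointwise inequality $F_{M}\preceq F_{N}$ (or $F_{N}\preceq F_{M}$) plus Kleene iteration. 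These are cosmetic differences.

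For part~\ref{main_2_part_1}, however, you take a genuinely different route. The paper argues combinatorially, establishing the set-theoretic inclusions $\NW_{1,j}\subseteq\ESW_{j}$, $\MW_{1,j}\subseteq\ESW_{j}$, $\NL_{2,j}\subseteq\EEL_{j}$, $\ML_{2,j}\subseteq\EEL_{j}$ on every realization by having Stopper mimic P1's winning strategy. Your argument is purely analytic: you verify the pointwise inequalities $F_{N}\preceq F_{E}$, $F_{M}\preceq F_{E}$, $\F_{E}\preceq\F_{N}$, $\F_{E}\preceq\F_{M}$ directly from \eqref{F_{N,j}_script_F_{N,j}_defns}--\eqref{F_{E,j}_script_F_{E,j}_defns} and then appeal to the monotone fixed-point comparison. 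The paper's approach gives more (a pathwise statement, not just an inequality of probabilities) and needs no hypothesis on the pgfs; your approach is shorter, stays entirely within the fixed-point framework already set up in Theorem~\ref{thm:main_1}, and makes transparent that all four inequalities have the same formal source.
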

\begin{remark}\label{rem_intuitive}
Note that $\nl_{1,j}^{(2)} = \nl_{1,j}^{(1)} = \alpha_{j}$, since the only way that a normal game, with P1 playing the first round, ends in less than $2$ rounds \emph{and} in P1's loss, is if she fails to make the very first move, i.e.\ the root $\phi$, with $\sigma(\phi) = j$, has no child of colour $k$ for any $k \in S_{j}$. On the other hand, $\ml_{1,j}^{(2)} \geqslant \sum_{i \in S_{j}} \beta_{i} \Prob\left[X_{v,i} = 1, X_{v,k} = 0 \text{ for all } k \in S_{j} \setminus \{i\}\big|\sigma(v) = j\right]$, since if $\phi$ has one child $v$ of colour $k$ for $k \in S_{j}$, no child of colour $k'$ for any $k' \in S_{j} \setminus \{k\}$, and $v$ has no child of colour $\ell$ for any $\ell \in [m] \setminus S_{k}$, then P1 is forced to move the token to $v$ in the first round, and P2 wins immediately. Thus, \eqref{main_2_part_3_cond_1} guarantees $\bnl_{1}^{(2)} \preceq \bml_{1}^{(2)}$, which in turn, surprisingly, guarantees $\bnl_{1} \preceq \bml_{1}$. 

Likewise, we note that $\mw_{1,j}^{(2)} = \alpha_{j}$, since the root $\phi$ having no child of colour $k$ for any $k \in S_{j}$ and P1 playing the first round of the mis\`{e}re game implies that she fails to move in the first round and thus wins. On the other hand, if $\phi$ has at least one child $u$ with $\sigma(u) = k \in S_{j}$ and $u$ has no child of colour $\ell$ for any $\ell \in [m] \setminus S_{k}$, then in the normal game, P1 moves the token to such a $u$, and P2 is unable to move in the second round, thus yielding
\begin{align}
\nw_{1,j}^{(2)} &= \sum_{\mathbf{n}_{S_{j}} = \left(n_{k}: k \in S_{j}\right) \in \mathbb{N}_{0}^{|S_{j}|} \setminus \left\{\mathbf{0}_{S_{j}}\right\}} \left\{1 - \prod_{i \in S_{j}}\left(1 - \beta_{i}\right)^{n_{i}}\right\} \chi_{j}\left(\mathbf{n}_{S_{j}}\right),\nonumber
\end{align}
which is bounded above by $\sum_{i \in S_{j}} \beta_{i} \E\left[X_{v,i}\big|\sigma(v) = j\right]$, since $\left\{1 - \prod_{i \in S_{j}}\left(1 - \beta_{i}\right)^{n_{i}}\right\} \leqslant \sum_{i \in S_{j}} \beta_{i} n_{i}$. Thus, \eqref{main_2_part_3_cond_2} guarantees $\bnw_{1}^{(2)} \preceq \bmw_{1}^{(2)}$, which in turn ensures that $\bnw_{1} \preceq \bmw_{1}$. 
\end{remark}

Our next main result pertains to the behaviour of the probabilities of the game outcomes as functions of the law of $\mathcal{T}$. Given $\mathcal{T}_{\bm{\chi}} = \mathcal{T}_{[m], \mathbf{p}, \bm{\chi}}$, as defined in Definition~\ref{defn:mult_GW}), for a \emph{fixed} $m$, we denote its law by $\mathcal{L}_{\bm{\chi}}$. We define the distance between two such laws $\mathcal{L}_{\bm{\chi}}$ and $\mathcal{L}_{\bm{\eta}}$ as
\begin{equation}\label{metric_d_{0}}
d_{0}\left(\mathcal{L}_{\bm{\chi}}, \mathcal{L}_{\bm{\eta}}\right) = \max\left\{||\chi_{j} - \eta_{j}||_{\tv}: j \in [m]\right\}, 
\end{equation}
where $\tv$ indicates the total variation distance between two probability measures. 

We introduce a slight change of notation in Theorem~\ref{thm:main_3}, to emphasize the dependence of our objects of interest on the law of the GW process under consideration. For instance, on $T_{\bm{\chi}}$, we replace $\nw_{i,j}$ by $\nw_{i,j,\bm{\chi}}$, $G_{j,S}$ by $G_{j,S,\bm{\chi}}$, $F_{N,j}$ by $F_{N,j,\bm{\chi}}$, $\alpha_{j}$ and $\beta_{j}$ by $\alpha_{j,\bm{\chi}}$ and $\beta_{j,\bm{\chi}}$ respectively etc. Let $\Prob_{\mathcal{L}_{\bm{\chi}}}$ be the probability measure induced by $\mathcal{L}_{\bm{\chi}}$ and $\E_{\mathcal{L}_{\bm{\chi}}}$ expectation under $\Prob_{\mathcal{L}_{\bm{\chi}}}$. 

\begin{theorem}\label{thm:main_3}
Keeping $m$ fixed, define the following subsets of laws $\mathcal{L}_{\bm{\chi}}$:
\begin{equation}
\mathcal{D}_{1} = \left\{\mathcal{L}_{\bm{\chi}}: \alpha_{j,\bm{\chi}} > 0 \text{ for each } j \in [m]\right\}, \quad \mathcal{D}_{2} = \left\{\mathcal{L}_{\bm{\chi}}: \beta_{j,\bm{\chi}} > 0 \text{ for each } j \in [m]\right\};\nonumber
\end{equation}
\begin{multline}
\mathcal{D}_{3} = \left\{\mathcal{L}_{\bm{\chi}}: \mathcal{E}_{j,S_{j},\bm{\chi}} = \E_{\mathcal{L}_{\bm{\chi}}}\left[\sum_{k \in S_{j}} X_{v,k}\big|\sigma(v) = j\right] < \infty \text{ for each } j \in [m]\right\}, \\ \mathcal{D}_{4} = \left\{\mathcal{L}_{\bm{\chi}}: \mathcal{E}_{j,[m] \setminus S_{j},\bm{\chi}} = \E_{\mathcal{L}_{\bm{\chi}}}\left[\sum_{k \in [m] \setminus S_{j}} X_{v,k}\big|\sigma(v) = j\right] < \infty \text{ for each }j \in [m]\right\},\nonumber
\end{multline}
where recall from Definition~\ref{defn:mult_GW} that $X_{v,k}$ is the number of children coloured $k$ of a vertex $v$, for $k \in [m]$. Finally, let
\begin{multline}
\mathcal{C}_{1} = \left\{\mathcal{L}_{\bm{\chi}}: G_{j,S_{j},\bm{\chi}}\left(\beta_{k,\bm{\chi}}: k \in S_{j}\right) > \alpha_{j,\bm{\chi}} \text{ for each } j \in [m]\right\} \text{ and } \\ \mathcal{C}_{2} = \left\{\mathcal{L}_{\bm{\chi}}: G_{j,[m] \setminus S_{j},\bm{\chi}}\left(\alpha_{k,\bm{\chi}}: k\in [m] \setminus S_{j}\right) > \beta_{j,\bm{\chi}} \text{ for each } j \in [m]\right\}.\nonumber
\end{multline}
Then, the following are true, for each $i = 1, 2$ and $j \in [m]$:
\begin{enumerate}
\item \label{main_3_part_1} If $\mathcal{L}_{\bm{\chi}} \in \left(\mathcal{D}_{1} \cup \mathcal{D}_{4}\right) \cap \left(\mathcal{D}_{2} \cup \mathcal{D}_{3}\right)$, then $\nw_{i,j,\bm{\chi}}$ and $\nl_{i,j,\bm{\chi}}$ are lower semicontinuous functions of $\mathcal{L}_{\bm{\chi}}$ with respect to the metric $d_{0}$. If, moreover, $\nd_{i,j,\bm{\chi}} = 0$, then $\nw_{i,j,\bm{\chi}}$ and $\nl_{i,j,\bm{\chi}}$ are continuous functions of $\mathcal{L}_{\bm{\chi}}$.
\item \label{main_3_part_2} If $\mathcal{L}_{\bm{\chi}} \in \left(\mathcal{C}_{1} \cup \mathcal{D}_{4}\right) \cap \left(\mathcal{C}_{2} \cup \mathcal{D}_{3}\right)$, then $\mw_{i,j,\bm{\chi}}$ and $\ml_{i,j,\bm{\chi}}$ are lower semicontinuous functions of $\mathcal{L}_{\bm{\chi}}$ with respect to the metric $d_{0}$. If, moreover, $\md_{i,j,\bm{\chi}} = 0$, then $\mw_{i,j,\bm{\chi}}$ and $\ml_{i,j,\bm{\chi}}$ are continuous functions of $\mathcal{L}_{\bm{\chi}}$.
\item \label{main_3_part_3} If $\mathcal{L}_{\bm{\chi}} \in \mathcal{D}_{3} \cap \mathcal{D}_{4}$, then $\esw_{j,\bm{\chi}}$ and $\eel_{j,\bm{\chi}}$ are lower semicontinuous functions of $\mathcal{L}_{\bm{\chi}}$ with respect to the metric $d_{0}$. 
\end{enumerate}
\end{theorem}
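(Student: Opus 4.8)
The plan is to exploit the fixed-point characterizations from Theorem~\ref{thm:main_1}, reducing everything to a statement about how $\min\FP$ and $\max\FP$ of the relevant monotone functions behave under perturbation of the law. Throughout, note that if $d_{0}(\mathcal{L}_{\bm{\chi}},\mathcal{L}_{\bm{\eta}})$ is small, then by the definition of total variation distance every ``elementary'' probability of the form $\chi_j(\mathbf{n})$, and more importantly every pgf $G_{j,S}$, $G_{j,[m]\setminus S}$ evaluated at a point of $[0,1]^{|S|}$, changes by at most $d_0$ in absolute value; moreover the quantities $\alpha_{j,\bm{\chi}}$ and $\beta_{j,\bm{\chi}}$ (being $G_{j,S_j,\bm{\chi}}(\mathbf 0)$ and $G_{j,[m]\setminus S_j,\bm{\chi}}(\mathbf 0)$) are also $d_0$-Lipschitz, with no moment assumption needed. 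This makes the functions $F_{N,\bm\chi}$, $\F_{N,\bm\chi}$, $F_{M,\bm\chi}$, $\F_{M,\bm\chi}$, $F_{E,\bm\chi}$, $\F_{E,\bm\chi}$ depend on $\mathcal{L}_{\bm\chi}$ in a way that is \emph{uniformly} continuous on $[0,1]^m$ (as maps $[0,1]^m \to [0,1]^m$ in the sup-norm), at least on the relevant subdomains. The core analytic task is thus: given a monotone (coordinatewise nondecreasing) continuous self-map $f$ of $[0,1]^m$ with a perturbation $f_\epsilon \to f$ uniformly, when is $\min\FP(f_\epsilon) \to \min\FP(f)$, and when is $\limsup_\epsilon \min\FP(f_\epsilon) \preceq \min\FP(f)$ (which yields lower semicontinuity of $\nw$, since $\nw = \min\FP$) versus $\liminf$ (needed for the reverse, hence for continuity when the draw probability vanishes)?

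The key steps, in order, are as follows. \textbf{Step 1:} Record the uniform-continuity-in-law statements for the six functions above, isolating exactly which of $\mathcal{D}_1,\dots,\mathcal{D}_4$ is needed for each. The subtlety is that $G_{j,S_j,\bm\chi}$ as a function on $[0,1]^{|S_j|}$ is automatically continuous, but its modulus of continuity (and whether it stays bounded away from degeneracies) can blow up without a first-moment bound; the sets $\mathcal{D}_3,\mathcal{D}_4$ are precisely the hypotheses ensuring the relevant pgfs are Lipschitz near the boundary, while $\mathcal{D}_1,\mathcal{D}_2$ (resp.\ $\mathcal{C}_1,\mathcal{C}_2$) force the fixed point to sit strictly inside a region where monotone iteration converges at a controlled rate. \textbf{Step 2:} Prove the abstract lemma: for monotone continuous $f_\epsilon \to f$ uniformly, $\min\FP(f_\epsilon)$ obtained as $\downarrow$-limit of $f_\epsilon^{(n)}(\mathbf 0)$ — wait, rather $\uparrow$-limit of $f_\epsilon^{(n)}(\mathbf 0)$ when $f_\epsilon(\mathbf 0)\succeq \mathbf 0$ — satisfies $\limsup_\epsilon \min\FP(f_\epsilon) \preceq$ the \emph{largest} fixed point $\le$ any cluster point, and (lower semicontinuity) one gets $\min\FP(f) \preceq \liminf_\epsilon \min\FP(f_\epsilon)$ for free from $f(\min\FP f) = \min\FP f$ and upper semicontinuity of $\FP$ under uniform convergence. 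Then argue that when there is a \emph{unique} fixed point — which is exactly what $\nd_{i,j,\bm\chi}=0$ encodes, via $\min\FP = \max\FP$ forcing $\nw+\nl = 1$ — the two bounds pinch, giving continuity. \textbf{Step 3:} Translate: $\nw_{i,j,\bm\chi}$ is the $j$-th coordinate of $\min\FP(F_{N,\bm\chi})$ or $\min\FP(\F_{N,\bm\chi})$, $\nl_{i,j,\bm\chi}$ is $1$ minus the $j$-th coordinate of $\max\FP$, and $\max\FP$ is handled symmetrically (it is the $\downarrow$-limit of iterates starting from $\mathbf 1_{[m]}$), so lower semicontinuity of $\nw$ and of $\nl$ follows from, respectively, lower semicontinuity of $\min\FP$ and upper semicontinuity of $\max\FP$ in each coordinate. \textbf{Step 4:} Repeat for the misère game, where the fixed-point maps $F_{M,\bm\chi}, \F_{M,\bm\chi}$ involve the extra additive terms $\alpha_k, \beta_k$; here plain positivity of $\alpha_j$ is not enough to keep the iteration nondegenerate, and one needs the sharper $\mathcal{C}_1, \mathcal{C}_2$ conditions (which say $G_{j,S_j,\bm\chi}(\bm\beta) > \alpha_{j,\bm\chi}$, i.e.\ $F_{M,j,\bm\chi}(\mathbf 0) = \alpha_j + 1 - G_{j,S_j}(1 - G_{k,[m]\setminus S_k}(\mathbf 0) + \beta_k : k) < 1$ strictly, preventing the fixed point from being pinned at the boundary) to run Step 2. \textbf{Step 5:} The escape game uses $F_{E,\bm\chi}, \F_{E,\bm\chi}$ and only asks for lower semicontinuity (no continuity claim, since the escape game never draws but the win probability can still jump); this needs only $\mathcal{D}_3 \cap \mathcal{D}_4$ for the uniform-continuity input, and Step 2's lower-semicontinuity half.

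The main obstacle will be Step 2 — making the abstract perturbation lemma genuinely \emph{correct in $[0,1]^m$ with $m>1$}, since fixed points of monotone maps are not unique in general and $\min\FP$ need not vary continuously even under nice perturbations (one can lose or gain an entire interval of fixed points in the limit). The resolution is to use the Knaster–Tarski/iteration structure carefully: $\min\FP(f_\epsilon) = \lim_n f_\epsilon^{(n)}(\mathbf 0)$, and for the \emph{lower} semicontinuity direction one passes to a subsequence along which $\min\FP(f_\epsilon)$ converges, observes the limit is a fixed point of $f$ (by uniform convergence), hence $\succeq \min\FP(f)$ — so actually it is $\nw = \min\FP$ that is lower semicontinuous in the sense $\nw(\mathcal L) \le \liminf \nw(\mathcal L_\epsilon)$, matching the theorem. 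For the continuity upgrade under $\nd = 0$, one must show that when the draw probability vanishes the fixed point is unique \emph{and} attracting from both $\mathbf 0$ and $\mathbf 1$, so that $f_\epsilon^{(n)}(\mathbf 0) \uparrow$ and $f_\epsilon^{(n)}(\mathbf 1) \downarrow$ trap $\min\FP(f_\epsilon) = \max\FP(f_\epsilon)$ in a window that closes uniformly in $\epsilon$; establishing this uniform two-sided squeeze, using the strict inequalities defining $\mathcal{D}_1,\mathcal{C}_1$, etc., to bound the contraction away from $1$, is the delicate part, and it is here that the moment hypotheses $\mathcal{D}_3,\mathcal{D}_4$ do real work by giving a global Lipschitz constant for the pgfs on all of $[0,1]^{|S_j|}$.
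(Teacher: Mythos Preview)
Your approach is viable and reaches the same conclusion, but it is a genuinely different route from the paper's. The paper never invokes an abstract perturbation lemma for $\min\FP$ and $\max\FP$. Instead, it uses the iterative characterization $\bnw_{1} = \lim_{n} \bnw_{1}^{(n)}$ with $\bnw_{1}^{(n+2)} = F_{N}\big(\bnw_{1}^{(n)}\big)$, and proves \emph{by induction on $n$} that each finite iterate $\nw_{1,j,\bm{\chi}}^{(n)}$ is a continuous function of the law. Lower semicontinuity of $\nw_{1,j,\bm{\chi}}$ then drops out immediately as a supremum of continuous functions. The Lipschitz estimates you need in Step~1 (stability of $G_{j,S,\bm{\chi}}(\mathbf{x})$ in both $\mathbf{x}$ and $\bm{\chi}$, with an $S_{\alpha}$ bound on $[0,\alpha]^{|S|}$ when no moment is assumed and an $\mathcal{E}_{j,S,\bm{\chi}}$ bound on all of $[0,1]^{|S|}$ under a first-moment hypothesis) are exactly the ones the paper isolates; the difference is only in how lower semicontinuity is extracted from them. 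Your compactness argument (subsequential limits of $\min\FP(F_{N,\bm{\eta}_{n}})$ are fixed points of $F_{N,\bm{\chi}}$, hence $\succeq \min\FP(F_{N,\bm{\chi}})$) is clean and would generalize to other monotone fixed-point problems; the paper's induction is more elementary and yields explicit continuity bounds at each finite stage.

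Two corrections to your writeup. First, your parenthetical computation of $F_{M,j,\bm{\chi}}(\mathbf{0})$ is off: since $G_{k,[m]\setminus S_{k}}(\mathbf{0}) = \beta_{k}$, one has $1 - G_{k,[m]\setminus S_{k}}(\mathbf{0}) + \beta_{k} = 1$, so $F_{M,j}(\mathbf{0}) = \alpha_{j}$, not the expression you wrote. The actual role of $\mathcal{C}_{1}$ is to force $\mw_{1,j,\bm{\chi}}^{(n)} \leqslant 1 + \alpha_{j,\bm{\chi}} - G_{j,S_{j},\bm{\chi}}\big(\beta_{k,\bm{\chi}}: k \in S_{j}\big) < 1$ (the bound obtained by running the mis\`{e}re recursion two steps), which keeps the iterates in a region where the pgf has a uniform Lipschitz constant. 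Second, your plan for the continuity upgrade under $\nd_{i,j,\bm{\chi}} = 0$ is substantially overcomplicated. No ``uniform two-sided squeeze'' or contraction estimate is needed: once lower semicontinuity of both $\nw_{i,j,\bm{\chi}}$ and $\nl_{i,j,\bm{\chi}}$ is established, the identity $\nw_{i,j,\bm{\chi}} = 1 - \nl_{i,j,\bm{\chi}}$ (which is exactly what $\nd_{i,j,\bm{\chi}} = 0$ says) makes $\nw_{i,j,\bm{\chi}}$ simultaneously lower and upper semicontinuous, hence continuous, in one line. This is how the paper concludes.
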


Our final main result provides a sufficient condition that guarantees a positive probability for Escaper winning the escape game.
\begin{theorem}\label{thm:main_4} 
For every $i \in [m]$ and $j \in S_{i}$, let us define the probability $\gamma_{i,j} = \Prob\left[X_{v,j} = 1, X_{v,k} = 0 \text{ for all } k \in S_{i} \setminus \{j\}\big|\sigma(v) = i\right]$, and for $i, j \in [m]$, let $m_{i,j} = \E[X_{v,j}|\sigma(v) = i]$. If there exists a function $f: [m] \rightarrow [m]$ such that $f(i) \in S_{i}$ for every $i \in [m]$, and the matrix $M'' = \left(\left(m''_{i,j}\right)\right)_{i,j \in [m]}$, with $m''_{i,j} = \sum_{k \in [m] \setminus S_{i}: f(k) = j} m_{i,k} \gamma_{k,j}$, has its largest eigenvalue strictly greater than $1$, then Escaper has a positive probability of winning if she plays the first round, i.e.\ $\eew_{j} > 0$ for each $j \in [m]$. Moreover, as long as $\alpha_{j} < 1$ for each $j \in [m]$, we have $\eew_{j} > 0$ for all $j \in [m]$ iff $\esl_{j} > 0$ for all $j \in [m]$.
\end{theorem}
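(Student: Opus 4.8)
\textbf{Proof proposal for Theorem~\ref{thm:main_4}.}

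The plan is to set up a comparison between the escape game and a suitable auxiliary multi-type Galton--Watson tree whose survival probability is positive precisely when the stated spectral condition holds. First, I would fix the function $f$ as in the hypothesis and describe a concrete (non-optimal, but sufficient) strategy for Escaper: starting from a vertex $v$ with $\sigma(v) = i$ on which it is Escaper's turn, Escaper looks for a child $u$ of $v$ whose colour $k$ lies in $[m] \setminus S_i$ (so the move $(v,u)$ is permissible for her), such that $u$ has exactly one child $w$ of colour $f(k) \in S_k$ and no other child of any colour in $S_k$. If such a $u$ exists, Escaper moves to $u$; Stopper is then forced (the edge $(u,w)$ being the only one permissible for Stopper out of $u$) to move to $w$, and the configuration recurses at $w$ with Escaper to move again. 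The key combinatorial point is that the sites $w$ reached in this way, recorded with their colours, form (a subset of) a multi-type Galton--Watson tree: from a colour-$i$ site, the expected number of colour-$j$ ``grandchildren of the recursive type'' is exactly $\sum_{k \in [m] \setminus S_i,\, f(k) = j} m_{i,k}\gamma_{k,j} = m''_{i,j}$, by the independence of distinct vertices' offspring and linearity of expectation (each of the $m_{i,k}$ expected colour-$k$ children of $v$ independently has probability $\gamma_{k,j}$ of being a usable $u$ with the forced grandchild of colour $j$).

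Next I would invoke the standard theory of multi-type branching processes (e.g.\ \cite{athreya_ney}, Chapter V): the process with mean matrix $M''$ survives with positive probability started from any type iff $M''$ is, in the relevant irreducible sense, supercritical, and in particular a sufficient condition is that its Perron root (largest eigenvalue) exceeds $1$. Since the event ``this auxiliary tree is infinite'' is contained in the event ``the recursion never terminates'', on which Escaper never fails to move and hence wins, we get $\eew_j \geqslant \Prob[\text{auxiliary tree rooted at a colour-}j\text{ site is infinite}] > 0$ for every $j$. One technical care needed here: if $M''$ is not irreducible, positivity of $\eew_j$ for \emph{every} $j$ still follows because from any colour one can, with positive probability, reach in one Escaper-then-Stopper double-step a site of a colour lying in an irreducible supercritical class (or one argues directly via $\min\FP$ of the associated generating map being $\prec \mathbf 1$), but I would phrase the cleanest version by noting that the generating function map of the auxiliary process has its minimal fixed point strictly below $\mathbf 1_{[m]}$ coordinatewise as soon as the Perron root exceeds $1$, which gives all $\eew_j > 0$ simultaneously.

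For the final ``moreover'' statement, I would argue that under $\alpha_j < 1$ for all $j$, the quantities $\eew_j$ and $\esl_j$ are linked through Theorem~\ref{thm:main_1} (the escape-game fixed-point characterization) together with the fact that the escape game never ends in a draw, so $\eew_j + \eel_j = 1$ and $\esw_j + \esl_j = 1$ for each $j$. The substantive content is that $\eew_j > 0$ for all $j$ iff $\esl_j > 0$ for all $j$; one direction should follow by a short monotonicity/irreducibility argument showing that if Escaper, moving first, can win from some colour with positive probability, then by starting the game with Stopper moving first and using that $\alpha_j < 1$ forces Stopper to hand the token (via a permissible-for-P1 edge, i.e.\ to a colour in $S_j$) to a vertex from which it is again Escaper's move, Escaper inherits a positive winning chance — and conversely, if Stopper moving first loses with positive probability, Escaper moving first certainly does at least as well since she then has strictly more control. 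I would make this precise by relating $\FP(F_E)$ and $\FP(\F_E)$ through the shared inner maps $G_{k,S_k}$ and $G_{k,[m]\setminus S_k}$ and the assumption $\alpha_j < 1$ (which guarantees $G_{j,S_j}$ is not identically the constant giving a trivial fixed point at $\mathbf 1$).

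\textbf{Main obstacle.} I expect the crux to be twofold: (i) verifying rigorously that the recursively-reached sites genuinely form a multi-type Galton--Watson tree with mean matrix exactly $M''$ — this requires being careful that the ``choice of $u$'' does not introduce dependence across different branches (it does not, because distinct candidate children $u$ belong to disjoint subtrees, but this needs a clean statement), and that we are free to let Escaper follow this myopic strategy rather than an optimal one (legitimate, since we only need a lower bound on $\eew_j$); and (ii) handling the possible reducibility of $M''$ so that a single eigenvalue condition yields $\eew_j > 0$ for \emph{all} $j$ rather than just some — the fixed-point/generating-function formulation is the way I would sidestep a case analysis. The ``moreover'' equivalence, by contrast, I expect to be comparatively routine once the no-draw identity and Theorem~\ref{thm:main_1} are in hand, modulo the careful use of $\alpha_j < 1$.
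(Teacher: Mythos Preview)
Your proposal is correct and follows essentially the same approach as the paper: the forcing strategy (Escaper moves to a child $u$ of colour $k\in[m]\setminus S_i$ having exactly one child of colour $f(k)$ in $S_k$, thereby pinning Stopper's reply), the identification of the even-level sites with a multi-type Galton--Watson process of mean matrix $M''$, and the appeal to the supercriticality criterion in \cite{athreya_ney} are all exactly what the paper does. One small difference worth noting: for the ``moreover'' equivalence, the paper does not go through $\FP(F_E)$ and $\FP(\F_E)$ as you suggest, but gives a short direct combinatorial argument --- if $\eew_k>0$ for all $k$ and $\alpha_i<1$, then with positive probability the root (of colour $i$) has a nonempty set of children with colours in $S_i$, and with positive probability \emph{every} such child lies in $\EEW_k$; Stopper, forced to move to one of them, then loses, giving $\esl_i>0$ (and conversely). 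This is simpler than the fixed-point route you sketch, and also clarifies why the hypothesis is ``$\eew_j>0$ for \emph{all} $j$'' rather than ``for some $j$'' as your phrasing momentarily suggests. Your flagged obstacle about reducibility of $M''$ is well-observed; the paper in fact glosses over this point and simply cites the Perron-eigenvalue criterion without discussing the non-irreducible case.
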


We now describe the organization of this paper. \S\ref{sec:proof_theorem_1} is dedicated to the proof of Theorem~\ref{thm:main_1}, with \S\ref{subsec:normal_recursions}, \S\ref{subsec:misere_recursions} and \S\ref{subsec:escape_recursions} respectively addressing the normal, mis\`{e}re and escape games. The proofs of Theorems~\ref{thm:main_example_1} and \ref{thm:main_example_2} are contained, respectively, in \S\ref{subsec:main_example_1_proof} and \S\ref{subsec:main_example_2_proof} of \S\ref{sec:main_examples_proof}. The proofs of \ref{main_3_part_1}, \ref{main_3_part_2} and \ref{main_3_part_3} of Theorem~\ref{thm:main_2} are given in three separate subsections \S\ref{subsec:main_2_part_1}, \S\ref{subsec:main_2_part_2} and \S\ref{subsec:main_2_part_3} of \S\ref{sec:main_2_proof}. Theorems~\ref{thm:main_3} and \ref{thm:main_4} are respectively proved in \S\ref{sec:main_3_proof} and \S\ref{sec:main_4_proof}.

\section{Proof of Theorem~\ref{thm:main_1}}\label{sec:proof_theorem_1}

\subsection{The normal games}\label{subsec:normal_recursions}
For every $n \in \mathbb{N}$, every $i =1, 2$ and each $j \in [m]$, let
\begin{enumerate}
\item $\NW_{i,j}^{(n)} \subset \NW_{i,j}$ comprise vertices $v$ such that if $v$ is the initial vertex and P$i$ plays the first round, the game lasts for less than $n$ rounds;
\item $\NL_{i,j}^{(n)} \subset \NL_{i,j}$ comprise vertices $v$ such that if $v$ is the initial vertex and P$i$ plays the first round, the game lasts for less than $n$ rounds;
\item $\ND_{i,j}^{(n)}$ comprise all vertices $v$ of $\mathcal{T}$ with $\sigma(v) = j$, such that $v \notin \NW_{i,j}^{(n)} \cup \NL_{i,j}^{(n)}$. 
\end{enumerate}
In other words, if $v \in \ND_{i,j}^{(n)}$ is the initial vertex and P$i$ plays the first round, the outcome of the game cannot be decided in less than $n$ rounds. We set $\NW_{i,j}^{(0)} = \NL_{i,j}^{(0)} = \emptyset$. We  define $\nw_{i,j}^{(n)}$, $\nl_{i,j}^{(n)}$ and $\nd_{i,j}^{(n)}$ to be the probabilities that the root $\phi$ belongs to $\NW_{i,j}^{(n)}$, $\NL_{i,j}^{(n)}$ and $\ND_{i,j}^{(n)}$ respectively, conditioned on $\sigma(\phi) = j$.

The following compactness result establishes that if a player is destined to win the normal game, she is able to do so in a finite number of rounds:
\begin{lemma}\label{lem:normal_compactness}
For each $i = 1, 2$ and $j \in [m]$, the subsets $\widetilde{\NW}_{i,j} = \NW_{i,j} \setminus \left(\bigcup_{n=1}^{\infty} \NW_{i,j}^{(n)}\right)$ and $\widetilde{\NL}_{i,j} = \NL_{i,j} \setminus \left(\bigcup_{n=1}^{\infty} \NL_{i,j}^{(n)}\right)$ are empty.
\end{lemma}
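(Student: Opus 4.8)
The plan is to show that if $v \in \NW_{i,j}$ (say), then the winning player can force a win within a bounded number of rounds, so that $v$ lies in some $\NW_{i,j}^{(n)}$. The natural framework is the standard backward-induction / game-tree analysis for combinatorial games on well-founded structures, but here the game tree need not be well-founded, so the content is precisely ruling out ``win-but-only-in-the-limit'' positions. First I would set up the recursive (one-step) characterization of the sets $\NW_{i,j}^{(n)}$, $\NL_{i,j}^{(n)}$, $\ND_{i,j}^{(n)}$: a vertex $v$ with $\sigma(v)=j$ lies in $\NW_{i,j}^{(n+1)}$ iff $v$ has at least one permissible child $u$ (permissibility determined by whether it is $P_i$'s or $P_{3-i}$'s turn, i.e.\ along an edge into $S_j$ for P1/Stopper) with $u \in \NL_{3-i,\sigma(u)}^{(n)}$; and $v \in \NL_{i,j}^{(n+1)}$ iff every permissible child $u$ satisfies $u \in \NW_{3-i,\sigma(u)}^{(n)}$ (vacuously true when $v$ has no permissible child, matching the losing rule of the normal game). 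These recursions are monotone in $n$, so the unions $\bigcup_n \NW_{i,j}^{(n)}$ and $\bigcup_n \NL_{i,j}^{(n)}$ are well-defined increasing limits.

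The key step is the combinatorial argument that $\NW_{i,j} \subseteq \bigcup_n \NW_{i,j}^{(n)}$ and similarly for $\NL$. I would argue by contradiction, using the notion of optimal play already invoked in the paper: suppose $v \in \widetilde{\NW}_{i,j}$, so $P_i$ wins from $v$ but not within any finite bound. Under optimal play the winner moves to shorten and the loser to prolong; consider the strategy subtree of positions reachable when $P_i$ plays her winning strategy against $P_{3-i}$'s prolonging strategy. Every branch of this subtree terminates (the opponent eventually cannot move), since $P_i$ wins on every branch; but an infinite finitely-branching tree (each vertex of $\mathcal{T}$ has finitely many children, a.s., since offspring counts are in $\mathbb{N}_0^m$) with all branches finite is, by König's lemma, finite. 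Hence there is a uniform bound on the length of play, contradicting $v \notin \NW_{i,j}^{(n)}$ for all $n$. The symmetric argument handles $\widetilde{\NL}_{i,j}$: if $P_{3-i}$ wins from $v$, the same König's-lemma argument applied to $P_{3-i}$'s winning strategy subtree gives a finite bound, so $v \in \NL_{i,j}^{(n)}$ for some $n$.

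The main obstacle is making the ``strategy subtree is finitely branching and all branches finite'' step fully rigorous: one must be careful that the relevant tree is the tree of game \emph{positions} (vertex of $\mathcal{T}$ together with whose turn it is), that finite branching is inherited from $\mathcal{T}$ having a.s.\ finite offspring numbers, and that ``$P_i$ wins'' genuinely means every maximal branch of her winning-strategy subtree is finite (this is where the normal-game convention -- a player who cannot move loses -- must be used, together with the fact that an infinite play is by definition a draw, not a win, so no branch of a winning strategy can be infinite). Once these points are nailed down, König's lemma does the rest, and I would remark that this is the exact analogue of the compactness lemma in \cite{holroyd_martin}, adapted to the bipartite permissible-edge structure of Definition~\ref{defn:games}, the only change being that ``child'' must be replaced throughout by ``permissible child for the player to move.'' I would also note that the argument is uniform over $i=1,2$ and $j\in[m]$ and does not require any assumption on the offspring distributions beyond their being supported on $\mathbb{N}_0^m$.
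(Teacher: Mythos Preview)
Your proposal is correct but takes a genuinely different route from the paper. The paper argues by direct construction: assuming $v_{1}\in\widetilde{\NW}_{1,j}$, it observes that $v_{1}$ can have no permissible child in any $\NL_{2,k}^{(n)}$ (else $v_{1}\in\NW_{1,j}^{(n+1)}$), yet must have some permissible child in $\NL_{2,k}$; hence $v_{1}$ has a child $v_{2}\in\widetilde{\NL}_{2,k}$. Repeating the analysis at $v_{2}$ produces $v_{3}\in\widetilde{\NW}_{1,\ell}$, and so on, yielding an explicit infinite play that P2 can enforce---a draw, contradicting $v_{1}\in\NW_{1,j}$. Your approach instead fixes any winning strategy for $P_{i}$ and applies K\"onig's lemma to the tree of plays consistent with that strategy (finitely branching because each vertex of $\mathcal{T}$ has finitely many children, all branches finite because $P_{i}$ wins on every branch), obtaining a uniform bound $N$ and hence $v\in\NW_{i,j}^{(N+1)}$. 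Your argument is the standard compactness device in combinatorial game theory and is cleaner and more portable; the paper's argument is more explicit and self-contained, and in particular exhibits the opponent's drawing strategy rather than merely inferring its existence. One phrasing issue: ``against $P_{3-i}$'s prolonging strategy'' is misleading, since fixing both strategies gives a single play, not a tree; what you need (and what you correctly describe a few lines later) is the tree in which $P_{i}$'s strategy is fixed and $P_{3-i}$ ranges over all legal replies. Also, the ``a.s.'' in ``a.s.\ finite offspring'' is superfluous: every realization has finite offspring at every vertex since $\chi_{j}$ is supported on $\mathbb{N}_{0}^{m}$.
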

\begin{proof}
We only present the proof for $\widetilde{\NW}_{1,j}$, since the arguments to prove the rest of the claim are very similar. We show that, since P1 cannot guarantee to win the game in a finite number of rounds, P2 can actually ensure that the game results in a draw. For the initial vertex $v_{1}$ to be in $\widetilde{\NW}_{1,j}$, the following must be true:
\begin{itemize}
\item $\sigma(v_{1}) = j$;
\item $v_{1}$ cannot have any child $u$ in $\NL_{2,k}^{(n)}$ for any $k \in S_{j}$ and $n \in \mathbb{N}$, because in that case, P1 wins in less than $n+1$ rounds;
\item $v_{1}$ must have at least one child $v_{2}$ in $\widetilde{\NL}_{2,k}$ for some $k \in S_{j}$, and it is to such a $v_{2}$ that P1 moves the token in the first round, under optimal play.
\end{itemize}
Next, for $v_{2}$ to be in $\widetilde{\NL}_{2,k}$, the following must be true:
\begin{itemize}
\item $\sigma(v_{2}) = k$;
\item for every $\ell \in [m] \setminus S_{k}$, every child $u$ of $v_{2}$ with $\sigma(u) = \ell$ must be in $\NW_{1,\ell}$, since otherwise, P2 would not lose the game; 
\item $v_{2}$ must have at least one child $v_{3}$ in $\widetilde{\NW}_{1,\ell}$ for some $\ell \in [m] \setminus S_{k}$. This is because, for every $\ell \in [m] \setminus S_{k}$ and every child $u$ of $v_{2}$ with $\sigma(u) = \ell$, if we have $u \in \NW_{1,\ell}^{(n)}$ for some $n \in \mathbb{N}$, then no matter where P2 moves the token in the second round, she would lose the game in a finite number of rounds. Moreover, assuming optimal play, P2 would move the token to some $v_{3}$ in $\widetilde{\NW}_{1,\ell}$, for some $\ell \in [m] \setminus S_{k}$, in the second round. 
\end{itemize}
These observations reveal a pattern: the token starts at $v_{1}$ in $\widetilde{\NW}_{1,j}$, and for each $n \in \mathbb{N}$,
\begin{itemize}
\item before the $2n$-th round, it is at a vertex $v_{2n} \in \widetilde{\NL}_{2,\ell}$ for some $\ell \in S_{\sigma(v_{2n-1})}$;
\item before the $(2n+1)$-st round, it is at a vertex $v_{2n+1} \in \widetilde{\NW}_{1,\ell'}$ for some $\ell' \in [m] \setminus S_{\sigma(v_{2n})}$.
\end{itemize}
It is clear that the game never comes to an end, thereby ensuring a draw for both players. %This completes the proof.
\end{proof}

The next lemma states a crucially used consequence of Lemma~\ref{lem:normal_compactness}:
\begin{lemma}\label{lem:normal_compactness_consequence}
For each $i = 1, 2$ and $j \in [m]$, we have $\nw_{i,j}^{(n)} \uparrow \nw_{i,j}$ and $\nl_{i,j}^{(n)} \uparrow \nl_{i,j}$ as $n \rightarrow \infty$.
\end{lemma}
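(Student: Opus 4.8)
The plan is to deduce this from Lemma~\ref{lem:normal_compactness} by a routine monotone-limit argument, so the work is essentially already done. First I would record the obvious \emph{monotonicity}: straight from the definitions, if the normal game started at a vertex $v$ with $\sigma(v) = j$ and P$i$ moving first is won by P$i$ in fewer than $n$ rounds, then it is also won by P$i$ in fewer than $n+1$ rounds, so
\[
\NW_{i,j}^{(n)} \subseteq \NW_{i,j}^{(n+1)} \subseteq \NW_{i,j} \qquad \text{and} \qquad \NL_{i,j}^{(n)} \subseteq \NL_{i,j}^{(n+1)} \subseteq \NL_{i,j}
\]
for every $n$. Consequently, under the law of $\mathcal{T}$ conditioned on $\sigma(\phi) = j$, the events $\bigl\{\phi \in \NW_{i,j}^{(n)}\bigr\}$ increase with $n$, and likewise $\bigl\{\phi \in \NL_{i,j}^{(n)}\bigr\}$. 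Each of these is a genuine event: whether $\phi \in \NW_{i,j}^{(n)}$ (resp.\ $\phi \in \NL_{i,j}^{(n)}$) is determined entirely by the colours of, and offspring counts at, the vertices lying within the first $n$ generations of $\mathcal{T}(\phi)$, i.e.\ by a finite portion of the realization.

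Next I would invoke Lemma~\ref{lem:normal_compactness}: since $\widetilde{\NW}_{i,j} = \NW_{i,j} \setminus \bigl(\bigcup_{n \geq 1} \NW_{i,j}^{(n)}\bigr) = \emptyset$, we get $\bigcup_{n \geq 1} \NW_{i,j}^{(n)} = \NW_{i,j}$, and similarly $\bigcup_{n \geq 1} \NL_{i,j}^{(n)} = \NL_{i,j}$. Applying continuity of probability along the increasing sequences of events from the previous paragraph then gives
\[
\nw_{i,j}^{(n)} = \Prob\bigl[\phi \in \NW_{i,j}^{(n)} \bigm| \sigma(\phi) = j\bigr] \;\uparrow\; \Prob\bigl[\phi \in \NW_{i,j} \bigm| \sigma(\phi) = j\bigr] = \nw_{i,j},
\]
and identically $\nl_{i,j}^{(n)} \uparrow \nl_{i,j}$, which is the assertion. (Equivalently, one may apply the monotone convergence theorem to the indicator functions $\mathbf{1}\{\phi \in \NW_{i,j}^{(n)}\}$ and $\mathbf{1}\{\phi \in \NL_{i,j}^{(n)}\}$.)

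The only step that carries any real content is the identity $\bigcup_n \NW_{i,j}^{(n)} = \NW_{i,j}$ (and its $\NL$ counterpart), which is exactly Lemma~\ref{lem:normal_compactness}; everything else here is bookkeeping. Within the present proof, the two points that deserve a sentence of justification are the monotonicity $\NW_{i,j}^{(n)} \subseteq \NW_{i,j}^{(n+1)}$ and the measurability of $\{\phi \in \NW_{i,j}^{(n)}\}$, both of which follow immediately from the observation that membership in $\NW_{i,j}^{(n)}$ depends only on the truncation of $\mathcal{T}(\phi)$ to its first $n$ levels. So I do not anticipate any genuine obstacle beyond correctly citing the compactness lemma.
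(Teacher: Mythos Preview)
Your proposal is correct and follows essentially the same approach as the paper: note the monotonicity $\NW_{i,j}^{(n)} \subseteq \NW_{i,j}^{(n+1)}$ (and likewise for $\NL$), invoke Lemma~\ref{lem:normal_compactness} to identify the union with $\NW_{i,j}$, and apply continuity of probability along increasing events. The paper's proof is terser, omitting your remarks on measurability and the alternative phrasing via monotone convergence, but the argument is the same.
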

\begin{proof}
By definition, it is immediate that $\NW_{i,j}^{(n)} \subseteq \NW_{i,j}^{(n+1)}$ and $\NL_{i,j}^{(n)} \subseteq \NL_{i,j}^{(n+1)}$. Consequently, $\left\{\NW_{i,j}^{(n)}\right\}_{n}$ and $\left\{\NL_{i,j}^{(n)}\right\}_{n}$ are increasing sequences of subsets. This, combined with Lemma~\ref{lem:normal_compactness}, gives us the conclusion of Lemma~\ref{lem:normal_compactness_consequence}.
\end{proof}

The rest of \S\ref{subsec:normal_recursions} is dedicated to deriving recursive relations and using these to prove the first part of Theorem~\ref{thm:main_1}. For a vertex $v$ to be in $\NW_{1,j}^{(n+1)}$, for any $j \in [m]$ and $n \in \mathbb{N}$, we must have $\sigma(v) = j$, and $v$ must have at least one child $u \in \NL_{2,k}^{(n)}$ for some $k \in S_{j}$. Therefore, we have
\begin{align}\label{normal_recur_1}
\nw_{1,j}^{(n+1)} &= \sum_{n_{r} \in \mathbb{N}_{0}: r \in [m]} \left\{1 - \prod_{k \in S_{j}} \left(1 - \nl_{2,k}^{(n)}\right)^{n_{k}}\right\} \chi_{j}(n_{1}, \ldots, n_{m}) = 1 - G_{j, S_{j}}\left(1 - \nl_{2,k}^{(n)}: k \in S_{j}\right).
\end{align}
We now verify that this recursion is true when $n = 0$. Since $\nl_{2,k}^{(0)} = 0$ for all $k \in S_{j}$, hence $G_{j, S_{j}}\left(1 - \nl_{2,k}^{(0)}: k \in S_{j}\right) = 1$. The right side of \eqref{normal_recur_1} thus equals $0$. For the fate of the game to be decided in less than $1$ round, P1 ought not to be able to make her very first move, but this is not possible if P1 is destined to win the game. Consequently, $\nw_{1,j}^{(1)} = 0$, thus showing that \eqref{normal_recur_1} holds for $n = 0$ as well. Likewise, for all $n \in \mathbb{N}_{0}$ and $j \in [m]$, we have
\begin{equation}\label{normal_recur_2}
\nw_{2,j}^{(n+1)} = 1 - G_{j, [m] \setminus S_{j}}\left(1 - \nl_{1,k}^{(n)}: k \in [m] \setminus S_{j}\right).%\nonumber
\end{equation}
For $v$ to be in $\NL_{1,j}^{(n+1)}$, for any $n \in \mathbb{N}$, we must have $\sigma(v) = j$, and every child $u$ of $v$ with $\sigma(u) = k$ for any $k \in S_{j}$ must be in $\NW_{2,k}^{(n)}$. Therefore
\begin{align}\label{normal_recur_3}
\nl_{1,j}^{(n+1)} &= \sum_{n_{r} \in \mathbb{N}_{0}: r \in [m]} \prod_{k \in S_{j}} \left(\nw_{2,k}^{(n)}\right)^{n_{k}} \chi_{j}(n_{1}, \ldots, n_{m}) = G_{j,S_{j}}\left(\nw_{2,k}^{(n)}: k \in S_{j}\right). 
\end{align}
Once again, we verify this recursion for $n = 0$. For the fate of the game to be decided in less than $1$ round, P1 must be unable to make her very first move, which happens only if $v$ does not have any child $u$ with $\sigma(u) = k$ for any $k \in S_{j}$. The probability of this event is equal to $G_{j,S_{j}}\left(\mathbf{0}_{S_{j}}\right) = G_{j,S_{j}}\left(\nw_{2,k}^{(0)}: k \in S_{j}\right)$, thus showing that \eqref{normal_recur_3} holds for $n = 0$. Likewise, for each $j \in [m]$ and $n \in \mathbb{N}_{0}$, we have
\begin{equation}\label{normal_recur_4}
\nl_{2,j}^{(n+1)} = G_{j, [m] \setminus S_{j}}\left(\nw_{1,k}^{(n)}: k \in [m] \setminus S_{j}\right).%\nonumber
\end{equation}
Using the above recursions, defining $F_{N}$, $\F_{N}$, $F_{N,j}$ and $\F_{N,j}$ as in \eqref{F_{N,j}_script_F_{N,j}_defns}, and setting $\bnw_{i}^{(n)} = \left(\nw_{i,j}^{(n)}: j \in [m]\right)$ for $i = 1, 2$, we get 
%\begin{equation}\label{normal_win_final}
$\bnw_{1}^{(n+2)} = F_{N}\left(\bnw_{1}^{(n)}\right)$ and $\bnw_{2}^{(n+2)} = \F_{N}\left(\bnw_{2}^{(n)}\right)$ for all $n \in \mathbb{N}_{0}$. 
%\end{equation}
From Lemma~\ref{lem:normal_compactness_consequence}, we get 
\begin{equation}
\bnw_{1} = \lim_{n \rightarrow \infty} \bnw_{1}^{(2n)} = \lim_{n \rightarrow \infty} F_{N}^{(n)}\left(\bnw_{1}^{(0)}\right) = \lim_{n \rightarrow \infty} F_{N}^{(n)}\left(\mathbf{0}_{[m]}\right).\nonumber
\end{equation}
%where recall from \S\ref{subsec:notation} that $F^{(n)}$ indicates the $n$-fold composition of $F$ with itself. 
Thus, $\bnw_{1}$ is a fixed point of $F_{N}$. Likewise, $\bnw_{2}$ is a fixed point of $\F_{N}$ with $\bnw_{2} = \lim_{n \rightarrow \infty} \F_{N}^{(n)}\left(\mathbf{0}_{[m]}\right)$.

It can be easily verified that $G_{j,S}(x_{i}: i \in S) \leqslant G_{j,S}(y_{i}: i \in S)$ for any $S \subset [m]$ and tuples $(x_{i}: i \in S)$ and $(y_{i}: i \in S)$ in $[0,1]^{|S|}$ with $(x_{i}: i \in S) \preceq (y_{i}: i \in S)$. Thus, whenever $(x_{1}, \ldots, x_{m}) \preceq (y_{1}, \ldots, y_{m})$, we have $F_{N}(x_{1}, \ldots, x_{m}) \preceq F_{N}(y_{1}, \ldots, y_{m})$ and $\F_{N}(x_{1}, \ldots, x_{m}) \preceq \F_{N}(y_{1}, \ldots, y_{m})$. Therefore 
\begin{equation}\label{normal_min_fixed_point}
\bnw_{1} = \min \FP(F_{N}) \quad \text{and} \quad \bnw_{2} = \min \FP(\F_{N}).
\end{equation}

The above recursions also yield $\bnl_{1}^{(n+2)} = \mathbf{1}_{[m]} - F_{N}\left(\mathbf{1}_{[m]} - \bnl_{1}^{(n)}\right)$ and $\bnl_{2}^{(n+2)} = \mathbf{1}_{[m]} - \F_{N}\left(\mathbf{1}_{[m]} - \bnl_{2}^{(n)}\right)$, where we set $\bnl_{i}^{(n)} = \left(\nl_{i,j}^{(n)}: j \in [m]\right)$ for $i = 1, 2$. Lemma~\ref{lem:normal_compactness_consequence} then yields
\begin{align}\label{normal_loss_fixed_point_1}
\bnl_{1} = \lim_{n \rightarrow \infty} \bnl_{1}^{(2n)} = \lim_{n \rightarrow \infty} \mathbf{1}_{[m]} - F_{N}^{(n)}\left(\mathbf{1}_{[m]} - \bnl_{1}^{(0)}\right) = \mathbf{1}_{[m]} - \lim_{n \rightarrow \infty} F_{N}^{(n)}\left(\mathbf{1}_{[m]}\right), 
\end{align}
and likewise, $\bnl_{2} = \mathbf{1}_{[m]} - \lim_{n \rightarrow \infty} \F_{N}^{(n)}\left(\mathbf{1}_{[m]}\right)$. The observations made above on the monotonically increasing nature of $F_{N}$ and $\F_{N}$ allow us to conclude that
\begin{equation}\label{normal_max_fixed_point}
\bnl_{1} = \mathbf{1}_{[m]} - \max \FP(F_{N}) \quad \text{and} \quad \bnl_{2} = \mathbf{1}_{[m]} - \max \FP(\F_{N}).
\end{equation}

\subsection{The mis\`{e}re games}\label{subsec:misere_recursions}
We define the subsets $\MW_{i,j}^{(n)}$, $\ML_{i,j}^{(n)}$ and $\MD_{i,j}^{(n)}$ analogous to $\NW_{i,j}^{(n)}$, $\NL_{i,j}^{(n)}$ and $\ND_{i,j}^{(n)}$ respectively, and the conditional probabilities $\mw_{i,j}^{(n)}$, $\ml_{i,j}^{(n)}$ and $\md_{i,j}^{(n)}$ analogous to $\nw_{i,j}^{(n)}$, $\nl_{i,j}^{(n)}$ and $\nd_{i,j}^{(n)}$ respectively, for $i = 1, 2$, $j \in [m]$ and $n \in \mathbb{N}_{0}$.

We outline the proof of Lemma~\ref{lem:misere_compactness} since it is very similar to that of Lemma~\ref{lem:normal_compactness}: 
\begin{lemma}\label{lem:misere_compactness}
For each $i = 1, 2$ and $j \in [m]$, the subsets $\widetilde{\MW}_{i,j} = \MW_{i,j} \setminus \left(\bigcup_{n=1}^{\infty} \MW_{i,j}^{(n)}\right)$ and $\widetilde{\ML}_{i,j} = \ML_{i,j} \setminus \left(\bigcup_{n=1}^{\infty} \ML_{i,j}^{(n)}\right)$ are empty.
\end{lemma}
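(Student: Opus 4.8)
The plan is to replay the proof of Lemma~\ref{lem:normal_compactness} essentially verbatim, carrying out one piece of bookkeeping: in the mis\`{e}re game a player who cannot move \emph{wins} rather than loses, so the role of a leaf is reversed. I would argue by contradiction. Suppose, say, $v_{1} \in \widetilde{\MW}_{1,j}$, so that P1 is destined to win the mis\`{e}re game started at $v_{1}$ with P1 moving first, but there is no bound on how long this takes. I will show that P2 can force an infinite (hence drawn) play from $v_{1}$, contradicting $v_{1} \in \MW_{1,j}$.

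First I would record the structural consequences of $v_{1} \in \widetilde{\MW}_{1,j}$: we have $\sigma(v_{1}) = j$; $v_{1}$ must have at least one P1-permissible child, since otherwise P1 is immobilized in the first round, wins the mis\`{e}re game, and $v_{1} \in \MW_{1,j}^{(1)}$, a contradiction; $v_{1}$ has no child in $\ML_{2,k}^{(n)}$ for any $k \in S_{j}$ and $n \in \mathbb{N}$, for otherwise P1 wins within $n+1$ rounds; and yet, since P1 is destined to win, she must possess a P1-permissible child on which P2 (to move) loses, hence some child $v_{2} \in \widetilde{\ML}_{2,k}$ with $k \in S_{j}$, to which P1 moves under optimal play. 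Next I would unwind $v_{2} \in \widetilde{\ML}_{2,k}$: here $\sigma(v_{2}) = k$; $v_{2}$ has at least one P2-permissible child, since otherwise P2 is immobilized and \emph{wins}, contradicting $v_{2} \in \ML_{2,k}$; every P2-permissible child $u$ of $v_{2}$, say with $\sigma(u) = \ell \in [m] \setminus S_{k}$, must lie in $\MW_{1,\ell}$, for otherwise P2 could avoid losing by moving there; and these children cannot all lie in the finite levels $\bigcup_{n} \MW_{1,\ell}^{(n)}$, since then P2 would lose within a bounded number of rounds; hence $v_{2}$ has a child $v_{3} \in \widetilde{\MW}_{1,\ell}$ for some $\ell \in [m] \setminus S_{k}$, to which P2 moves when prolonging.

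Iterating these two steps produces an infinite play $v_{1}, v_{2}, v_{3}, \dots$ with $v_{2n} \in \widetilde{\ML}_{2,\cdot}$ and $v_{2n+1} \in \widetilde{\MW}_{1,\cdot}$ for every $n \in \mathbb{N}$, along which the token never stalls; so the mis\`{e}re game from $v_{1}$ lasts forever and is a draw, contradicting $v_{1} \in \MW_{1,j}$. Thus $\widetilde{\MW}_{1,j} = \emptyset$, and the remaining cases $\widetilde{\ML}_{1,j}$, $\widetilde{\MW}_{2,j}$ and $\widetilde{\ML}_{2,j}$ follow by the same argument after interchanging the roles of the two players and of $S_{\sigma(\cdot)}$ with its complement. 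I do not foresee a real obstacle; the only subtlety is the reversed leaf convention, which only reinforces (by producing the contradictions above) that no vertex along the constructed infinite play can be a leaf.
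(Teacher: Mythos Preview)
Your proposal is correct and follows essentially the same argument as the paper's own proof: both construct the infinite play $v_{1}, v_{2}, v_{3}, \ldots$ by alternately unpacking membership in $\widetilde{\MW}_{1,\cdot}$ and $\widetilde{\ML}_{2,\cdot}$, and conclude that P2 can force a draw, contradicting $v_{1} \in \MW_{1,j}$. If anything, you are slightly more explicit than the paper in spelling out why neither $v_{1}$ nor $v_{2}$ can lack permissible children (the reversed leaf convention), which the paper leaves implicit.
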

\begin{proof}
We show that $\widetilde{\MW}_{1,j}$ is empty by proving that P2 is able to ensure a draw instead of her loss. For the initial vertex $v_{1}$ to be in $\widetilde{\MW}_{i,j}$, it must have at least one child $u$ in $\ML_{2,k}$ for some $k \in S_{j}$, and no child of $v_{1}$ should be in $\ML_{2,k}^{(n)}$ for any $n \in \mathbb{N}$ and any $k \in S_{j}$. Thus, $v_{1}$ must have at least one child $v_{2}$ in $\widetilde{\ML}_{2,k}$ for some $k \in S_{j}$, and under optimal play, P1 moves the token to such a $v_{2}$ in the first round.

Next, for $v_{2}$ to be in $\widetilde{\ML}_{2,k}$, every child $u$ of $v_{2}$ with $\sigma(u) = \ell$ for any $\ell \in [m] \setminus S_{k}$ has to be in $\MW_{1,\ell}$, and $v_{2}$ must have at least one child $v_{3}$ in $\widetilde{\MW}_{1,\ell}$ for some $\ell \in [m] \setminus S_{k}$. Again, it is to such a $v_{3}$ that P2 moves the token under optimal play. These observations reveal the pattern along which the game proceeds, showing that the game never comes to an end, thus resulting in a draw.
\end{proof}

Similar to Lemma~\ref{lem:normal_compactness_consequence}, we conclude from Lemma~\ref{lem:misere_compactness} that as $n \rightarrow \infty$,
\begin{equation}\label{eq:misere_compactness_consequence}
\mw_{i,j}^{(n)} \uparrow \mw_{i,j} \text{ and } \ml_{i,j}^{(n)} \uparrow \ml_{i,j} \text{ for each } i = 1, 2 \text{ and } j \in [m].
\end{equation}

The derivation of recursion relations is quite similar to those of \S\ref{subsec:normal_recursions}. For $j \in [m]$ and $n \in \mathbb{N}$, an initial vertex $v$ is in $\MW_{1,j}^{(n+1)}$ if either $v$ has no child $u$ with $\sigma(u) = k$ for any $k \in S_{j}$, or $v$ has at least one child $u$ in $\ML_{2,k}^{(n)}$ for some $k \in S_{j}$. Recalling $\alpha_{j} = G_{j,S_{j}}\left(\mathbf{0}_{S_{j}}\right)$ from \S\ref{subsec:main_results}, we get
\begin{multline}\label{misere_recur_1}
\mw_{1,j}^{(n+1)} = \alpha_{j} + \sum_{n_{r} \in \mathbb{N}_{0}: r \in [m]} \left\{1 - \prod_{k \in S_{j}}\left(1 - \ml_{2,k}^{(n)}\right)^{n_{k}}\right\} \chi_{j}(n_{1}, \ldots, n_{m}) \\= \alpha_{j} + 1 - G_{j,S_{j}}\left(1 - \ml_{2,k}^{(n)}: k \in S_{j}\right).
\end{multline}
We now verify this recursion for $n = 0$. For the outcome to be decided in less than $1$ round, P1 must be unable to make her very first move, thus winning the game immediately. This happens only if $v$ has no child $u$ with $\sigma(u) = k$ for any $k \in S_{j}$, and this event has probability $\alpha_{j}$. Thus $\mw_{1,j}^{(1)} = \alpha_{j}$. On the other hand, since $\ml_{2,k}^{(0)} = 0$ for each $k \in S_{j}$, we get $G_{j,S_{j}}\left(1 - \ml_{2,k}^{(0)}: k \in S_{j}\right) = 1$, making the right side of \eqref{misere_recur_1} equal $\alpha_{j}$ for $n = 0$. This concludes the verification. Likewise, for each $j \in [m]$ and $n \in \mathbb{N}_{0}$, we deduce that
\begin{equation}%\label{misere_recur_2}
\mw_{2,j}^{(n+1)} = \beta_{j} + 1 - G_{j,[m] \setminus S_{j}}\left(1 - \ml_{1,k}^{(n)}: k \in [m] \setminus S_{j}\right). \nonumber
\end{equation}
For an initial vertex $v$ to be in $\ML_{1,j}^{(n+1)}$ for $n \in \mathbb{N}$, it must have at least one child $u$ with $\sigma(u) = k$ for some $k \in S_{j}$, and every child $u$ with $\sigma(u) = k$ for any $k \in S_{j}$ must be in $\MW_{2,k}^{(n)}$. Thus,
\begin{align}\label{misere_recur_3}
\ml_{1,j}^{(n+1)} &= \sum_{\substack{n_{r} \in \mathbb{N}_{0}: r \in [m]\\(n_{k}: k \in S_{j}) \neq \mathbf{0}_{S_{j}}}} \prod_{k \in S_{j}}\left(\mw_{2,k}^{(n)}\right)^{n_{k}} \chi_{j}(n_{1}, \ldots, n_{m}) = G_{j,S_{j}}\left(\mw_{2,k}^{(n)}: k \in S_{j}\right) - \alpha_{j}.
\end{align}
Once again, we verify this recursion for $n = 0$. For the outcome to be decided in less than $1$ round, P1 must be unable to make her first move, allowing her to win. Since this is not an option, i.e.\ P1 must lose, hence $\ml_{1,j}^{(1)} = 0$. On the other hand, since $\mw_{2,k}^{(0)} = 0$ for each $k \in S_{j}$, hence $G_{j,S_{j}}\left(\mw_{2,k}^{(0)}: k \in S_{j}\right) = \alpha_{j}$, thus making the right side of \eqref{misere_recur_3} equal $0$ as well. Likewise, for each $j \in [m]$ and $n \in \mathbb{N}_{0}$, we have
\begin{equation}%\label{misere_recur_4}
\ml_{2,j}^{(n+1)} = G_{j,[m] \setminus S_{j}}\left(\mw_{1,k}^{(n)}: k \in [m] \setminus S_{j}\right) - \beta_{j}.\nonumber
\end{equation}

Using these recursions, defining $F_{M}$, $\F_{M}$ , $F_{M,j}$ and $\F_{M,j}$ as in \eqref{F_{M,j}_script_F_{M,j}_defns}, and setting $\bmw_{i}^{(n)} = \left(\mw_{i,j}^{(n)}: j \in [m]\right)$ for $i = 1, 2$, we get $\bmw_{1}^{(n+2)} = F_{M}\left(\bmw_{1}^{(n)}\right)$ and $\bmw_{2}^{(n+2)} = \F_{M}\left(\bmw_{2}^{(n)}\right)$. Using \eqref{eq:misere_compactness_consequence}, we get $\bmw_{1} = \lim_{n \rightarrow \infty} F_{M}^{(n)}\left(\mathbf{0}_{[m]}\right)$ and $\bmw_{2} = \lim_{n \rightarrow \infty} \F_{M}^{(n)}\left(\mathbf{0}_{[m]}\right)$, and utilizing the monotonically increasing nature of both $F_{M}$ and $\F_{M}$, we get
\begin{equation}\label{misere_min_fixed_point}
\bmw_{1} = \min \FP(F_{M}) \quad \text{and} \quad \bmw_{2} = \min \FP(\F_{M}).
\end{equation} 

The above recursions also yield $\bml_{1}^{(n+2)} = \mathbf{1}_{[m]} - F_{M}\left(\mathbf{1}_{[m]} - \bml_{1}^{(n)}\right)$ and $\bml_{2}^{(n+2)} = \mathbf{1}_{[m]} - \F_{M}\left(\mathbf{1}_{[m]} - \bml_{2}^{(n)}\right)$, where $\bml_{i}^{(n)} = \left(\ml_{i,j}^{(n)}: j \in [m]\right)$ for $i = 1, 2$, so that \eqref{eq:misere_compactness_consequence} leads to 
\begin{equation}\label{misere_max_fixed_point}
\bml_{1} = \mathbf{1}_{[m]} - \max \FP(F_{M}) \quad \text{and} \quad \bml_{2} = \mathbf{1}_{[m]} - \max \FP(\F_{M}).
\end{equation}

\subsection{The escape games}\label{subsec:escape_recursions}
For $j \in [m]$ and $n \in \mathbb{N}$, we let $\ESW_{j}^{(n)} \subset \ESW_{j}$ comprise vertices $v$ such that if $v$ is the initial vertex and Stopper plays the first round, she wins in less than $n$ rounds. We let $\EEL_{j}^{(n)} \subset \EEL_{j}$ comprise vertices $v$ such that if $v$ is the initial vertex and Escaper plays the first round, she loses in less than $n$ rounds. We set $\ESW_{j}^{(0)} = \EEL_{j}^{(0)} = \emptyset$. We let $\ESL_{j}^{(n)}$ and $\EEW_{j}^{(n)}$ be the complement subsets of $\ESW_{j}^{(n)}$ and $\EEL_{j}^{(n)}$ respectively.  

\begin{lemma}\label{lem:escape_compactness}
For each $j \in [m]$, the subsets $\widetilde{\ESW}_{j} = \ESW_{j} \setminus \left(\bigcup_{n=1}^{\infty}\ESW_{j}^{(n)}\right)$ and $\widetilde{\EEL}_{j} = \EEL_{j} \setminus \left(\bigcup_{n=1}^{\infty}\EEL_{j}^{(n)}\right)$ are empty.
\end{lemma}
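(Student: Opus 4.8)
The plan is to mirror the proof of Lemma~\ref{lem:normal_compactness} almost verbatim, adapting only the terminal rule of the game to the escape setting. The key point for the escape game is that \emph{Stopper} wins precisely when some player eventually fails to make a permissible move, while \emph{Escaper} wins exactly on the complementary event that the token can be moved forever; there is no draw. Hence if $v \in \widetilde{\ESW}_j$, i.e.\ Stopper can win from $v$ (going first) but cannot do so in any bounded number of rounds, I must show this is impossible by exhibiting an Escaper strategy that keeps the token moving indefinitely — which would make Escaper the winner, contradicting $v \in \ESW_j$. Symmetrically, if $v \in \widetilde{\EEL}_j$, Escaper (going first) loses from $v$ but not within any bounded number of rounds, and I must show Escaper can in fact force an infinite play, again a contradiction.

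First I would set up the recursive structure exactly as in Lemma~\ref{lem:normal_compactness}. For $v_1 \in \widetilde{\ESW}_{j}$ with Stopper moving first: Stopper's permissible moves go to children of colour $k \in S_{j}$. If any such child $u$ lay in $\EEL_{k}^{(n)}$ for some $n$ (i.e.\ Escaper, moving next, loses within $n$ rounds), Stopper would win from $v_1$ in at most $n+1$ rounds, contradicting $v_1 \notin \bigcup_n \ESW_j^{(n)}$. So every permissible child $u$ of $v_1$ satisfies $u \in \EEW_{k} \setminus \bigcup_n \EEL_k^{(n)}$; moreover $v_1$ must have at least one permissible child (else Stopper fails to move at $v_1$ and wins immediately, so $v_1 \in \ESW_j^{(1)}$, contradiction). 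Since Stopper wins from $v_1$, it is not the case that all permissible children of $v_1$ lie in $\EEW_k$ in a way that lets Escaper escape — wait, rather: because Escaper will move the token (Escaper moves second here) to whichever permissible child she likes, and $v_1 \in \ESW_j$ forces every permissible child to be a Stopper-win, each permissible child $v_2$ of $v_1$ lies in $\ESW_k$ for $k = \sigma(v_2) \in S_j$, and in fact in $\widetilde{\ESW}_k$ since none lies in $\bigcup_n \EEL_k^{(n)}$... here I need to be careful to phrase this so that the roles of "who moves next" and "which of $\ESW/\EEW$" are tracked correctly, since the escape game is not symmetric between the two players. I would follow the bookkeeping of Lemma~\ref{lem:normal_compactness}: from $v_{2n-1} \in \widetilde{\ESW}_{\sigma(v_{2n-1})}$ the token is forced to a child $v_{2n}$ of colour in $S_{\sigma(v_{2n-1})}$ lying in the analogous "Escaper-to-move, Stopper still winning, but not in bounded time" set, and then from $v_{2n}$ to $v_{2n+1}$ of colour in $[m]\setminus S_{\sigma(v_{2n})}$ back in $\widetilde{\ESW}$. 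This produces an infinite play, so Escaper never gets stuck and therefore \emph{wins}, contradicting $v_1 \in \ESW_j$. The treatment of $\widetilde{\EEL}_j$ is entirely parallel, with the first mover being Escaper.

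The main obstacle I expect is \emph{not} any technical difficulty but the careful tracking of parity and of which player moves next, because unlike the normal/misère cases the escape game treats Stopper and Escaper asymmetrically even at the level of the terminal condition — "failure to move" is good for Stopper regardless of who fails. In particular I must be sure that when the induction says "$v$ cannot have a permissible child in such-and-such bounded-win set" it is the \emph{right} player's bounded-win set for the parity of the round, and that "optimal play forces the token to $\widetilde{\cdot}$" uses the correct quantifier (Stopper chooses her move; Escaper chooses hers; but in the chain above some of the choices are forced because \emph{every} option lies in the relevant $\widetilde{\cdot}$ set). Once that bookkeeping is pinned down, the conclusion — an infinite descending sequence $v_1, v_2, v_3, \ldots$ along which the token can always be legally advanced, whence Escaper wins and $\widetilde{\ESW}_j = \widetilde{\EEL}_j = \emptyset$ — follows exactly as in Lemma~\ref{lem:normal_compactness}. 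I would then remark that, just as Lemma~\ref{lem:normal_compactness} yields Lemma~\ref{lem:normal_compactness_consequence}, the present lemma gives $\esw_j^{(n)} \uparrow \esw_j$ and $\eel_j^{(n)} \uparrow \eel_j$, which is what the subsequent derivation of \eqref{escape_min_fixed_point} from the escape recursions requires.
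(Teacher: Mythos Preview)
Your overall plan---a direct adaptation of the argument of Lemma~\ref{lem:normal_compactness}---is sound and would yield a correct proof once the bookkeeping is straightened out. The paper, however, takes a genuinely different route: rather than redoing the infinite-descent argument from scratch, it \emph{reduces} the escape game on a realization $T$ to a normal game on an auxiliary tree $T'$. Each non-root vertex of $T(w)$ is duplicated into an ``odd'' and an ``even'' copy, with edges alternating between the two; at every even-level vertex $v$ that has no child of colour in $S_{\sigma(v)}$ (so Stopper would be stuck there and win), a fresh leaf $\nu_v$ of some colour in $S_{\sigma(v)}$ is attached to $v_e$. One then checks that Stopper wins the escape game from $w$ in $T$ in fewer than $n$ rounds iff P1 wins the normal game from $w_e$ in $T'$ in fewer than (roughly) $n$ rounds, so $\widetilde{\ESW}_j$ on $T$ corresponds to $\widetilde{\NW}_{1,j}$ on $T'$, which is empty by Lemma~\ref{lem:normal_compactness}. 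Your approach is more elementary and self-contained; the paper's buys a clean reuse of the already-proved lemma at the cost of constructing and verifying the auxiliary tree.

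One concrete wrinkle in your write-up: the quantifiers in your ``wait, rather'' passage are inverted. From $v_1 \in \widetilde{\ESW}_j$ with Stopper to move, it is \emph{not} the case that every permissible child lies in $\EEW_k$, nor that every permissible child is a Stopper-win; rather, (a) $v_1$ has at least one permissible child (else $v_1 \in \ESW_j^{(1)}$), (b) no permissible child lies in $\bigcup_n \EEL_k^{(n)}$ (else $v_1 \in \bigcup_n \ESW_j^{(n)}$), and (c) since Stopper wins, at least one permissible child lies in $\EEL_k$, hence in $\widetilde{\EEL}_k$; Stopper's optimal move is to such a $v_2$. Conversely, from $v_2 \in \widetilde{\EEL}_k$ with Escaper to move, \emph{every} permissible child lies in $\ESW_\ell$ (since Escaper loses), and at least one of them---call it $v_3$---lies in $\widetilde{\ESW}_\ell$ (else, there being finitely many children, $v_2 \in \bigcup_n \EEL_k^{(n)}$); Escaper moves to $v_3$. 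This gives the infinite play, and since an infinite play is an outright Escaper win (not merely a draw), the contradiction with $v_1 \in \ESW_j$ is immediate.
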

\begin{proof}
We prove the claim for $\widetilde{\ESW}_{j}$. Given a realization $T$ of $\mathcal{T}$, we construct a corresponding rooted tree $T'$ so as to draw a parallel between the escape game played on $T$ and a suitable normal game on $T'$. Let $w$ be the initial vertex for the escape game on $T$. We denote the root of $T'$ by $w_{e}$. For every vertex $v$ of $T(w) \setminus \{w\}$, we create two copies of $v$ in $T'$ -- the odd copy $v_{o}$ and the even copy $v_{e}$, and we assign colours $\sigma'(v_{o}) = \sigma'(v_{e}) = \sigma(v)$, where $\sigma(v)$ is the colour of $v$ in $T$. If $u$ is the parent of $v$ in $T(w)$, then in $T'$, we include only the directed edges $(u_{o}, v_{1})$ and $(u_{1}, v_{o})$. If $v$ in $T(w)$ is at an even distance from $w$ and $v$ has no child of colour $k$ for any $k \in S_{\sigma(v)}$, we call $v$ \emph{special}. In this case, we choose some $k \in S_{j}$, add a vertex $\nu_{v}$ to $T'$ with $\sigma'(\nu_{v}) = k$, add the edge $\left(v_{e}, \nu_{v}\right)$, and keep $\nu_{v}$ childless. 

The normal game on $T'$ begins at $w_{e}$ and P1 plays the first round. The permissible edges for P1 are the same as those for Stopper, and the permissible edges for P2 are the same as those for Escaper. Stopper executes the following strategy. If on $T'$, P1 moves the token from $u_{e}$ to $v_{o}$, and $u$ is not a special vertex in $T$, then in the corresponding round of the escape game, Stopper moves the token from $u$ to $v$. If $u$ is a special vertex, then P1 is able to move the token from $u_{e}$ to $\nu_{u}$, but Stopper fails to make a move and the escape game comes to an end.

We argue that Stopper wins the escape game on $T$ iff P1 wins the normal game on $T'$. If Stopper wins because of being unable to make a move, then the escape game must have reached, at the end of an even round, a special vertex $u$ in $T(w)$. By our construction, P1 moves the token, in that round, from $u_{e}$ to $\nu_{u}$, but in the next round, P2 gets stuck since $\nu_{u}$ is childless. If Stopper wins because of Escaper being unable to make a move, then the escape game must have reached, at the end of an odd round, a vertex $u$ in $T(w)$ such that $u$ has no child $v$ with $\sigma(v) \in [m] \setminus S_{\sigma(u)}$. In this case, the normal game, at the end of the corresponding round, must have reached $u_{o}$, and there exists no child $v_{e}$ of $u_{o}$ with $\sigma'(v_{e}) \in [m] \setminus S_{\sigma'(u_{o})}$. Consequently, P2 fails to move in this round and loses. Thus $\widetilde{ESW}_{j}$ on $T$ corresponds to $\widetilde{\NW}_{1,j}$ on $T'$. By Lemma~\ref{lem:normal_compactness}, the conclusion follows.
\end{proof}

Letting $\esw_{j}^{(n)}$ and $\eel_{j}^{(n)}$ denote the probabilities, conditioned on $\sigma(\phi) = j$, that $\phi$ belongs to $\ESW_{j}^{(n)}$ and $\EEL_{j}^{(n)}$ respectively, for each $n \in \mathbb{N}_{0}$, we conclude, as in Lemma~\ref{lem:normal_compactness_consequence}, that as $n \rightarrow \infty$,
\begin{equation}\label{eq:escape_compactness_consequence}
\esw_{j}^{(n)} \uparrow \esw_{j} \text{ and } \eel_{j}^{(n)} \uparrow \eel_{j} \text{ for each } j \in [m].
\end{equation}

For an initial vertex $v$ to be in $\ESW_{j}^{(n+1)}$, for  $n \in \mathbb{N}$, either $v$ has no child of colour $k$ for any $k \in S_{j}$, or $v$ has at least one child $u$ in $\EEL_{k}^{(n)}$ for some $k \in S_{j}$. Thus, 
\begin{multline}\label{escape_recur_1}
\esw_{j}^{(n+1)} = \alpha_{j} + \sum_{n_{r} \in \mathbb{N}_{0}: r \in [m]} \left\{1 - \prod_{k \in S_{j}}\left(1 - \eel_{k}^{(n)}\right)^{n_{k}}\right\} \chi_{j}(n_{1}, \ldots, n_{m}) \\= \alpha_{j} + 1 - G_{j,S_{j}}\left(1 - \eel_{k}^{(n)}: k \in S_{j}\right).
\end{multline}
We now verify this recursion for $n = 0$. For Stopper to win the game in less than $1$ round, she must fail to make a move in the very first round, which happens only if $v$ has no child of colour $k$ for any $k \in S_{j}$. The probability of this event is $\alpha_{j}$. Hence, $\esw_{j}^{(1)} = \alpha_{j}$. On the other hand, $1 - G_{j,S_{j}}\left(1 - \eel_{k}^{(0)}: k \in S_{j}\right) = 0$ since $\eel_{k}^{(0)} = 0$ for each $k \in S_{j}$. This concludes the verification. For $v$ to be in $\EEL_{j}^{(n+1)}$ for some $n \in \mathbb{N}$, either $v$ has no child of colour $k$ for any $k \in [m] \setminus S_{j}$, or, for every $k \in [m] \setminus S_{j}$, every child $u$ of $v$ with $\sigma(u) = k$ is in $\ESW_{k}^{(n)}$. Thus 
\begin{multline}\label{escape_recur_2}
\eel_{j}^{(n+1)} = \beta_{j} + \sum_{\substack{n_{r} \in \mathbb{N}_{0}: r \in [m]\\\left(n_{k}: k \in [m] \setminus S_{j}\right) \neq \mathbf{0}_{[m] \setminus S_{j}}}} \prod_{k \in [m] \setminus S_{j}}\left(\esw_{k}^{(n)}\right)^{n_{k}} \chi_{j}(n_{1}, \ldots, n_{m})\\ = G_{j, [m] \setminus S_{j}}\left(\esw_{k}^{(n)}: k \in [m] \setminus S_{j}\right). %\nonumber
\end{multline}
Once again, we verify this recursion for $n = 0$. For Escaper to lose the game in less than $1$ round, she must be unable to make her very first move, which means that $v$ has no child of colour $k$ for any $k \in [m] \setminus S_{j}$. This event happens with probability $\beta_{j}$. Thus, $\eel_{j}^{(1)} = \beta_{j}$. On the other hand, $G_{j, [m] \setminus S_{j}}\left(\esw_{k}^{(0)}: k \in [m] \setminus S_{j}\right) = G_{j, [m] \setminus S_{j}}\left(\mathbf{0}_{[m] \setminus S_{j}}\right) = \beta_{j}$.

Using these recursions, defining $F_{E}$, $\F_{E}$, $F_{E,j}$ and $\F_{E,j}$ as in \eqref{F_{E,j}_script_F_{E,j}_defns}, we get $\besw^{(n+2)} = F_{E}\left(\besw^{(n)}\right)$ and $\beel^{(n+2)} = \mathbf{1}_{[m]} - \F_{E}\left(\mathbf{1}_{[m]} - \beel^{(n)}\right)$, where $\besw^{(n)} = \left(\esw_{j}^{(n)}: j \in [m]\right)$ and $\beel^{(n)} = \left(\eel_{j}^{(n)}: j \in [m]\right)$. Using the monotonically increasing nature of $F_{E}$ and $\F_{E}$ and \eqref{eq:escape_compactness_consequence}, 
\begin{equation}\label{escape_min_fixed_point}
\besw = \lim_{n \rightarrow \infty} \besw^{(2n)} = \lim_{n \rightarrow \infty} F_{E}^{(n)}\left(\mathbf{0}_{[m]}\right) = \min \FP(F_{E}),
\end{equation}
%and
\begin{equation}\label{escape_max_fixed_point}
\beel = \lim_{n \rightarrow \infty} \beel^{(2n)} = \mathbf{1}_{[m]} - \lim_{n \rightarrow \infty} \F_{E}^{(n)}\left(\mathbf{1}_{[m]} - \beel^{(0)}\right) = \mathbf{1}_{[m]} - \max \FP(\F_{E}).
\end{equation}

\section{Proofs of Theorems~\ref{thm:main_example_1} and \ref{thm:main_example_2}}\label{sec:main_examples_proof}
\subsection{Proof of Theorem~\ref{thm:main_example_1}}\label{subsec:main_example_1_proof} The generating functions involved in this example are
\begin{align}
& G_{b,S_{b}}(x_{k}: k \in S_{b}) = G_{b,\{b\}}(x_{b}) = (p_{0}+p_{\rr}) + p_{\br} x_{b} + p_{\blbl} x_{b}^{2};\nonumber\\
& G_{r,S_{r}}(x_{k}: k \in S_{r}) = G_{r,\{r\}}(x_{r}) = (q_{0}+q_{\blbl}) + q_{\br} x_{r} + q_{\rr} x_{r}^{2}; \nonumber\\
& G_{b, [m] \setminus S_{b}}(x_{k}: k \in [m] \setminus S_{b}) = G_{b,\{r\}}(x_{r}) = (p_{0} + p_{\blbl}) + p_{\br} x_{r} + p_{\rr} x_{r}^{2};\nonumber\\
& G_{r, [m] \setminus S_{r}}(x_{k}: k \in [m] \setminus S_{r}) = G_{r,\{b\}}(x_{b}) = (q_{0}+q_{\rr}) + q_{\br} x_{b} + q_{\blbl} x_{b}^{2}. \nonumber
\end{align}
From these and the recursions derived in \S\ref{subsec:normal_recursions}, we get
\begin{align}
\nw_{1,b} &= p_{\br} + p_{\blbl} - p_{\br}\left(p_{\br} + p_{\rr} - p_{\br} \nw_{1,r} - p_{\rr} \nw_{1,r}^{2}\right) - p_{\blbl} \left(p_{\br} + p_{\rr} - p_{\br} \nw_{1,r} - p_{\rr} \nw_{1,r}^{2}\right)^{2} \nonumber\\
&= (p_{0}+p_{\blbl})\left\{p_{\br} + 2p_{\blbl} - p_{\blbl}(p_{0}+p_{\blbl})\right\} + \left(p_{br}^{2} + 2p_{\br}^{2}p_{\blbl} + 2p_{\br}p_{\blbl}p_{\rr}\right)\nw_{1,r} \nonumber\\& + \left(p_{\br}p_{\rr} - p_{\blbl}p_{\br}^{2} + 2p_{\blbl}p_{\rr}p_{\br} + 2p_{\blbl}p_{\rr}^{2}\right)\nw_{1,r}^{2} - 2p_{\blbl}p_{\br}p_{\rr}\nw_{1,r}^{3} - p_{\blbl}p_{\rr}^{2}\nw_{1,r}^{4}.  \nonumber
\end{align}
On the other hand, we have
\begin{multline}
1 - \nl_{1,b} = (p_{0}+p_{\blbl})\left\{p_{\br} + 2p_{\blbl} - p_{\blbl}(p_{0}+p_{\blbl})\right\} + \left(p_{br}^{2} + 2p_{\br}^{2}p_{\blbl} + 2p_{\br}p_{\blbl}p_{\rr}\right)\left(1 - \nl_{1,r}\right) +\\ \left(p_{\br}p_{\rr} - p_{\blbl}p_{\br}^{2} + 2p_{\blbl}p_{\rr}p_{\br} + 2p_{\blbl}p_{\rr}^{2}\right)\left(1 - \nl_{1,r}\right)^{2} - 2p_{\blbl}p_{\br}p_{\rr}\left(1 - \nl_{1,r}\right)^{3} - p_{\blbl}p_{\rr}^{2}\left(1 - \nl_{1,r}\right)^{4}.\nonumber
\end{multline}
Combined together, these yield
\begin{multline}\label{nd_{1,b}_recursion_example_4}
\nd_{1,b} = \nd_{1,r}\Big[p_{br}^{2} + 2p_{\br}^{2}p_{\blbl} + 2p_{\br}p_{\blbl}p_{\rr} + \left(p_{\br}p_{\rr} - p_{\blbl}p_{\br}^{2} + 2p_{\blbl}p_{\rr}p_{\br} + 2p_{\blbl}p_{\rr}^{2}\right) \left(1 - \nl_{1,r} + \nw_{1,r}\right) -\\ 2p_{\blbl}p_{\br}p_{\rr}\left\{\left(1 - \nl_{1,r}\right)^{2} + \nw_{1,r}^{2} + \nw_{1,r}\left(1 - \nl_{1,r}\right)\right\} - p_{\blbl}p_{\rr}^{2}\left(1 - \nl_{1,r} + \nw_{1,r}\right)\left\{\left(1 - \nl_{1,r}\right)^{2} + \nw_{1,r}^{2}\right\}\Big].
\end{multline}
By symmetry, we have
\begin{multline}\label{nd_{1,r}_recursion_example_4}
\nd_{1,r} = \nd_{1,b}\Big[\left(q_{br}^{2} + 2q_{\br}^{2}q_{\rr} + 2q_{\br}q_{\rr}q_{\blbl}\right) + \left(q_{\br}q_{\blbl} - q_{\rr}q_{\br}^{2} + 2q_{\rr}q_{\blbl}q_{\br} + 2q_{\rr}q_{\blbl}^{2}\right) \left(1 - \nl_{1,b} + \nw_{1,b}\right) \\- 2q_{\rr}q_{\br}q_{\blbl}\left\{\left(1 - \nl_{1,b}\right)^{2} + \nw_{1,b}^{2} + \nw_{1,b}\left(1 - \nl_{1,b}\right)\right\} - q_{\rr}q_{\blbl}^{2}\left(1 - \nl_{1,b} + \nw_{1,b}\right)\left\{\left(1 - \nl_{1,b}\right)^{2} + \nw_{1,b}^{2}\right\}\Big].
\end{multline}
We now split our analysis into two parts: the first is where  $A = p_{\br}p_{\rr} - p_{\blbl}p_{\br}^{2} + 2p_{\blbl}p_{\rr}p_{\br} + 2p_{\blbl}p_{\rr}^{2}$ from \eqref{nd_{1,b}_recursion_example_4} is non-negative, and the second is where $A$ is negative. When $A \geqslant 0$, from \eqref{nd_{1,b}_recursion_example_4} and using $p_{\rr} \leqslant 1 - p_{\br} - p_{\blbl}$, we get
\begin{align}\label{A_nonnegative_upper_bound}
\nd_{1,b} &\leqslant \nd_{1,r}\left[\left(p_{br}^{2} + 2p_{\br}^{2}p_{\blbl} + 2p_{\br}p_{\blbl}p_{\rr}\right) + 2\left(p_{\br}p_{\rr} - p_{\blbl}p_{\br}^{2} + 2p_{\blbl}p_{\rr}p_{\br} + 2p_{\blbl}p_{\rr}^{2}\right)\right] \nonumber\\
&\leqslant \nd_{1,r}\left(p_{\br}^{2} + 6p_{\blbl}p_{\br}p_{\rr} + 2p_{\br}p_{\rr} + 4p_{\blbl}p_{\rr}^{2}\right) \nonumber\\
&\leqslant \nd_{1,r}\left(2p_{\br} + 4p_{\blbl} - p_{\br}^{2} - 8p_{\blbl}^{2} - 4p_{\br}p_{\blbl} - 2p_{\br}^{2}p_{\blbl} + 2p_{\br}p_{\blbl}^{2} + 4p_{\blbl}^{3}\right).
\end{align}
Define the domain $\mathcal{D} = \left\{(x,y): x \in [0,1], y \in [0,1], x+y \leqslant 1\right\}$ and the function $f: \mathcal{D} \rightarrow \mathbb{R}$ as
\begin{equation}
f(x,y) = 2x + 4y - x^{2} - 8y^{2} - 4xy - 2x^{2}y + 2xy^{2} + 4y^{3}.\nonumber
\end{equation}
The partial derivatives of this function are $\frac{\partial}{\partial x} f(x,y) = 2 - 2x - 4y - 4xy + 2y^{2}$ and $\frac{\partial}{\partial y}f(x,y) = 4 - 16y - 4x - 2x^{2} + 4xy + 12y^{2}$. From the equation $\frac{\partial}{\partial x}f(x,y) = 0$, we have
\begin{equation}\label{x_value_example_4_normal}
2x + 4xy = 2 - 4y + 2y^{2} \implies x = \frac{(1-y)^{2}}{1+2y}.
\end{equation}
Substituting this in the equation $\frac{\partial}{\partial y}f(x,y) = 0$, we get the quartic polynomial equation $54y^{4} - 28y^{3} - 36y^{2} + 12y = 2$, with real roots $y_{1} = 1$ and $y_{2} \approx -0.77985$, of which only $y_{1}$ lies in our domain of interest. The corresponding value of $x$, from \eqref{x_value_example_4_normal}, is $x_{1} = 0$, and the value of $f$ at this critical point is $f(0,1) = 0$. On the boundary of $\mathcal{D}$:
\begin{itemize}
\item When $x+y = 1$, we have $f(x,y) =  x^{2}$ strictly increasing on $[0,1]$ with maximum $f(1,0) = 1$. 
\item When $x = 0$, we have $f(0,y) = 4y - 8y^{2} + 4y^{3} = 4y(1-y)^{2}$, so that $f'(0,y) = 4 - 16y + 12y^{2} = 4(1-y)(1-3y)$ is strictly positive only if $y < \frac{1}{3}$. Thus $f(0,y)$ is strictly increasing on $\left[0, \frac{1}{3}\right)$ and strictly decreasing on $\left(\frac{1}{3}, 1\right]$, and the maximum is $f\left(0, \frac{1}{3}\right) = \frac{16}{27}$.
\item When $y = 0$, we have $f(x,0) = 2x - x^{2}$, so that $f'(x,0) = 2 - 2x = 2(1-x)$ is strictly positive for $x < 1$, and the maximum, attained at $x = 1$, is $f(1,0) = 1$.
\end{itemize}
These observations, together with \eqref{A_nonnegative_upper_bound}, imply that when $A \geqslant 0$, we have $p_{\br}^{2} + 6p_{\blbl}p_{\br}p_{\rr} + 2p_{\br}p_{\rr} + 4p_{\blbl}p_{\rr}^{2} < 1$ for all $p_{\br}$, $p_{\blbl}$ and $p_{\rr}$, except when $p_{\br} = 1$. When $A$ is non-positive, \eqref{nd_{1,b}_recursion_example_4} yields
\begin{multline}\label{A_negative_upper_bound}
\nd_{1,b} \leqslant \nd_{1,r}\left(p_{br}^{2} + 2p_{\br}^{2}p_{\blbl} + 2p_{\br}p_{\blbl}p_{\rr}\right) = \nd_{1,r}\left[p_{\br}^{2} + 2p_{\br} p_{\blbl}(p_{\br} + p_{\rr})\right] \\ \leqslant \nd_{1,r}\left[p_{\br}^{2} + 2p_{\br} p_{\blbl}\right] \leqslant \nd_{1,r}\left(p_{\br} + p_{\blbl}\right)^{2} \leqslant \nd_{1,r},
\end{multline}
and the only scenario in which this inequality is an equality is where $p_{\br} = 1$. 

We conclude that unless $p_{\br} = 1$, we have $\nd_{1,b} < \nd_{1,r}$ if we assume that $\nd_{1,r}$ is strictly positive. A similar analysis on \eqref{nd_{1,r}_recursion_example_4} shows that unless $q_{\br} = 1$, we have $\nd_{1,r} < \nd_{1,b}$ if we assume that $\nd_{1,b}$ is strictly positive. Clearly, these two inequalities cannot hold simultaneously. Therefore, we have:
\begin{itemize} 
\item $\nd_{1,b} = \nd_{1,r} = 1$ when $p_{\br} = q_{\br} = 1$,
\item and in all other cases, $\nd_{1,b} = \nd_{1,r} = 0$.
\end{itemize}
%An analogous conclusion holds for $\nd_{2,b}$ and $\nd_{2,r}$.

For the mis\`{e}re game, $\alpha_{b} = p_{0} + p_{\rr}$ and $\beta_{b} = p_{0} + p_{\blbl}$. From the generating functions above and the recursions in \S\ref{subsec:misere_recursions} we get
\begin{multline}%\label{mw_{1,b}_recursion_example_4}
\mw_{1,b} = p_{0} + p_{\rr} + \left(p_{\br}^{2} + 2p_{\blbl}p_{\br}\right) \mw_{1,r} + \left(p_{\br}p_{\rr} + 2p_{\blbl}p_{\rr} - p_{\blbl}p_{\br}^{2}\right) \mw_{1,r}^{2} - 2p_{\blbl}p_{\br}p_{\rr}\mw_{1,r}^{3}\\ - p_{\blbl}p_{\rr}^{2}\mw_{1,r}^{4}. \nonumber
\end{multline}
Likewise, we deduce that
\begin{multline}%\label{ml_{1,b}_recursion_example_4}
1 - \ml_{1,b} = p_{0} + p_{\rr} + \left(p_{\br}^{2} + 2p_{\blbl}p_{\br}\right) \left(1 - \ml_{1,r}\right) + \left(p_{\br}p_{\rr} + 2p_{\blbl}p_{\rr} - p_{\blbl}p_{\br}^{2}\right) \left(1 - \ml_{1,r}\right)^{2} \\- 2p_{\blbl}p_{\br}p_{\rr}\left(1 - \ml_{1,r}\right)^{3} - p_{\blbl}p_{\rr}^{2}\left(1 - \ml_{1,r}\right)^{4}.\nonumber
\end{multline}
Combining the above, we get
\begin{multline}\label{md_{1,b}_recursion_example_4}
\md_{1,b} = \md_{1,r}\Big[p_{\br}^{2} + 2p_{\blbl}p_{\br} + \left(p_{\br}p_{\rr} + 2p_{\blbl}p_{\rr} - p_{\blbl}p_{\br}^{2}\right)\left(1 - \ml_{1,r} + \mw_{1,r}\right) - 2p_{\blbl}p_{\br}p_{\rr}\\\left\{\left(1 - \ml_{1,r}\right)^{2} + \mw_{1,r}^{2} + \mw_{1,r}(1 - \ml_{1,r})\right\} - p_{\blbl}p_{\rr}^{2} \left(1 - \ml_{1,r} + \mw_{1,r}\right)\left\{\left(1 - \ml_{1,r}\right)^{2} + \mw_{1,r}^{2}\right\}\Big].
\end{multline}
We analyze the cases where $B = p_{\br}p_{\rr} + 2p_{\blbl}p_{\rr} - p_{\blbl}p_{\br}^{2}$ is non-negative and where $B$ is negative, separately. When $B \geqslant 0$, from \eqref{md_{1,b}_recursion_example_4} and using $p_{\rr} \leqslant 1 - p_{\blbl} - p_{\br}$, we have
\begin{multline}\label{B_nonnegative_example_4}
\md_{1,b} \leqslant \md_{1,r}\left(p_{\br}^{2} + 2p_{\blbl}p_{\br} + 2\left(p_{\br}p_{\rr} + 2p_{\blbl}p_{\rr} - p_{\blbl}p_{\br}^{2}\right)\right) \\
\leqslant \md_{1,r}\left(2p_{\br} + 4p_{\blbl} - p_{\br}^{2} - 4p_{\blbl}^{2} - 4p_{\br}p_{\blbl} - 2p_{\br}^{2}p_{\blbl}\right). 
\end{multline}
Defining $\mathcal{D}$ as above, and letting $f: \mathcal{D} \rightarrow \mathbb{R}$ be $f(x,y) = 2x + 4y - x^{2} - 4y^{2} - 4xy - 2x^{2}y$, we get the partial derivatives $\frac{\partial}{\partial x}f(x,y) = 2 - 2x - 4y - 4xy$ and $\frac{\partial}{\partial y}f(x,y) = 4 - 8y - 4x - 2x^{2}$. From the equation $\frac{\partial}{\partial y}f(x,y) = 0$, we have 
\begin{equation}\label{y_value_example_4_misere}
8y = 4 - 4x - 2x^{2} \implies y = \left(2 - 2x - x^{2}\right)/4.
\end{equation}
Substituting this in the equation $\frac{\partial}{\partial x}f(x,y) = 0$, we get the cubic polynomial equation $x^{3} + 3x^{2} - 2x = 0$, with roots $x_{1} = 0$, $x_{2} = -\frac{3}{2} - \frac{\sqrt{17}}{2}$ and $x_{3} = \frac{\sqrt{17}}{2} - \frac{3}{2}$, of which $x_{1}$ and $x_{3}$ lie in our domain of interest. The corresponding values of $y$, from \eqref{y_value_example_4_misere}, are $y_{1} = \frac{1}{2}$ and $y_{3} = \frac{\sqrt{17}}{8} - \frac{3}{8}$. The values of $f$ at these critical points are $f(x_{1}, y_{1}) = f\left(0, \frac{1}{2}\right) = 1$ and $f(x_{3}, y_{3}) = f\left(\frac{\sqrt{17}}{2} - \frac{3}{2}, \frac{\sqrt{17}}{8} - \frac{3}{8}\right) \approx 0.8866$. 

We examine the critical point $(x_{1}, y_{1})$ carefully. We have $p_{\br} = 0$ and $p_{\blbl} = \frac{1}{2}$. If $p_{\rr} < \frac{1}{2}$, then \eqref{B_nonnegative_example_4} shows that we end up with a strictly smaller value of $p_{\br}^{2} + 2p_{\blbl}p_{\br} + 2p_{\br}p_{\rr} + 4p_{\blbl}p_{\rr} - 2p_{\blbl}p_{\br}^{2}$. So we need only focus on the case where $p_{\rr} = \frac{1}{2}$ and $p_{0} = 0$. We then have, from \eqref{md_{1,b}_recursion_example_4}:
\begin{align}
\md_{1,b} &= \md_{1,r}\Big[2\left(\frac{1}{2}\right)^{2}\left(1 - \ml_{1,r} + \mw_{1,r}\right) - \left(\frac{1}{2}\right)^{3} \left(1 - \ml_{1,r} + \mw_{1,r}\right)\left\{\left(1 - \ml_{1,r}\right)^{2} + \mw_{1,r}^{2}\right\}\Big] \nonumber\\
&= \md_{1,r}\left(1 - \ml_{1,r} + \mw_{1,r}\right)\left[\frac{1}{2} - \frac{1}{8}\left\{\left(1 - \ml_{1,r}\right)^{2} + \mw_{1,r}^{2}\right\}\right] \nonumber
\end{align}
and $\left(1 - \ml_{1,r} + \mw_{1,r}\right)\left[\frac{1}{2} - \frac{1}{8}\left\{\left(1 - \ml_{1,r}\right)^{2} + \mw_{1,r}^{2}\right\}\right]$ equals $1$ iff $1 - \ml_{1,r} + \mw_{1,r} = 2$ and $\left(1 - \ml_{1,r}\right)^{2} + \mw_{1,r}^{2} = 0$, which cannot happen simultaneously. %Consequently, we have $\left(1 - \ml_{1,r} + \mw_{1,r}\right)\left[\frac{1}{2} - \frac{1}{8}\left\{\left(1 - \ml_{1,r}\right)^{2} + \mw_{1,r}^{2}\right\}\right] < 1$. 
On the boundary of $\mathcal{D}$:
\begin{itemize}
\item When $y = 1-x$, we have $f(x,1-x) = 2x^{3} - 3x^{2} + 2x$, and $f'(x,1-x) = 6x^{2} - 6x + 2 = 6\left(x - \frac{1}{2}\right)^{2} + \frac{1}{2}$ is strictly positive for all $x$, so that $f(x,1-x)$ is strictly increasing on $[0,1]$ and the maximum is $f(1,0) = 1$.
\item When $x = 0$, we have $f(0,y) = 4y - 4y^{2} = 4y(1-y)$, so that $f'(0,y) = 4 - 8y = 8\left(\frac{1}{2} - y\right)$ is strictly positive for $y < \frac{1}{2}$. Thus $f(0,y)$ is strictly increasing on $\left[0, \frac{1}{2}\right)$ and strictly decreasing on $\left(\frac{1}{2}, 1\right]$, and the maximum is $f\left(0, \frac{1}{2}\right) = 1$. %Note that we have done a detailed analysis of this case above.
\item When $y = 0$, we have $f(x,0) = 2x - x^{2} = x(2 - x)$, so that $f'(x,0) = 2 - 2x = 2(1-x) > 0$ for $x < 1$. Thus, $f(x,0)$ is strictly increasing on $[0,1)$, with maximum $f(1,0) = 1$. 
\end{itemize}
%These observations, together with \eqref{B_nonnegative_example_4}, show us that when $B \geqslant 0$, we have $p_{\br}^{2} + 2p_{\blbl}p_{\br} + 2p_{\br}p_{\rr} + 4p_{\blbl}p_{\rr} - 2p_{\blbl}p_{\br}^{2} < 1$ for all $p_{\br}$, $p_{\blbl}$, $p_{\rr}$, except when $p_{\br} = 1$. 
When $B < 0$, from \eqref{md_{1,b}_recursion_example_4}, we have
\begin{multline}\label{B_negative_example_4}
\md_{1,b} \leqslant \md_{1,r}\left[p_{\br}^{2} + 2p_{\blbl}p_{\br}\right] \leqslant \md_{1,r}\left[p_{\br}^{2} + 2p_{\blbl}p_{\br} + p_{\blbl}^{2}\right] = \md_{1,r}\left(p_{\br} + p_{\blbl}\right)^{2} \leqslant \md_{1,r}, 
\end{multline}
and the only scenario under which equality holds in \eqref{B_negative_example_4} is if we have $p_{\br} = 1$. Thus, unless $p_{\br} = 1$, we have $\md_{1,b} < \md_{1,r}$ if we assume that $\md_{1,r}$ is strictly positive. A similar analysis yields that unless $q_{\br} = 1$, we have $\md_{1,r} < \md_{1,b}$ if $\md_{1,b}$ is assumed strictly positive. Clearly, these two inequalities cannot hold simultaneously. Therefore, we conclude that:
\begin{itemize}
\item if $p_{\br} = q_{\br} = 1$, then $\md_{1,b} = \md_{1,r} = 1$,
\item and in all other cases, $\md_{1,b} = \md_{1,r} = 0$.
\end{itemize}
%An analogous conclusion holds for $\md_{2,b}$ and $\md_{2,r}$.

For the escape game, using the generating functions above and recursions from \S\ref{subsec:escape_recursions}, we get
\begin{multline}\label{esl_{b}_esl_{r}_relationship_1}
\esl_{b} = \esl_{r}\Big\{\left(p_{\br}^{2} + 2p_{\br}p_{\rr}\right) + \left(p_{\br}^{2}p_{\blbl} + 4p_{\br}p_{\rr}p_{\blbl} + 4p_{\blbl}p_{\rr}^{2} - p_{\br}p_{\rr}\right) \esl_{r} -\\ \big\{2p_{\br}p_{\rr}p_{\blbl} + 4p_{\blbl}p_{\rr}^{2}\big\}\esl_{r}^{2} + p_{\blbl}p_{\rr}^{2} \esl_{r}^{3}\Big\}, %\nonumber
\end{multline}
so that, if $C = p_{\br}^{2}p_{\blbl} + 4p_{\br}p_{\rr}p_{\blbl} + 4p_{\blbl}p_{\rr}^{2} - p_{\br}p_{\rr}$ is non-negative, using $p_{\rr} \leqslant 1 - p_{\br} - p_{\blbl}$,  
\begin{multline}%\label{esl_{b}_esl_{r}_relationship}
\esl_{b} \leqslant \esl_{r}\left(p_{\br}^{2} + p_{\br}p_{\rr} + p_{\br}^{2}p_{\blbl} + 4p_{\br}p_{\rr}p_{\blbl} + 5p_{\blbl}p_{\rr}^{2}\right) \\ \leqslant \esl_{r}\left(2 p_{\br}^{2}p_{\blbl} + 6p_{\br}p_{\blbl}^{2} - 7p_{\br}p_{\blbl} + p_{\br} + 5p_{\blbl}^{3} - 10p_{\blbl}^{2} + 5p_{\blbl}\right).\nonumber
\end{multline}
With the domain $\mathcal{D}$ as defined above, consider the function $f:\mathcal{D} \rightarrow \mathbb{R}$ with $f(x,y) = 2x^{2}y + 6xy^{2} - 7xy + x + 5y^{3} - 10y^{2} + 5y$. The partial derivatives of $f$ are given by $\frac{\partial}{\partial x}f(x,y) = 4xy + 6y^{2} - 7y + 1$ and $\frac{\partial}{\partial y}f(x,y) = 2x^{2} + 12xy - 7x + 15y^{2} - 20y + 5$. The equation $\frac{\partial}{\partial x}f(x,y) = 0$ gives
\begin{align}\label{x_value_escape_example_3}
4xy = 7y - 1 - 6y^{2} \implies x = \frac{7y - 1 - 6y^{2}}{4y},
\end{align}
and substituting this in the equation $\frac{\partial}{\partial y}f(x,y) = 0$, we get the quartic polynomial equation $24 y^{4} + 16y^{3} - 42y^{2} + 2 = 0$, with roots $y_{1} = 1$, $y_{2} \approx -1.6868$, $y_{3} \approx -0.21244$ and $y\approx 0.23255$, of which $y_{1}$ and $y_{2}$ are in our domain of interest. From \eqref{x_value_escape_example_3}, the corresponding values of $x$ are $x_{1} = 0$ and $x_{4} \approx 0.32613$, and the corresponding values of $f$ are $f(x_{1}, y_{1}) = 0$ and $f(x_{4}, y_{4}) \approx 0.635365$. On the boundary of the domain $\mathcal{D}$, we have:
\begin{itemize}
\item When $y = 1-x$, we have $f(x,1-x) = x^{2} + x^{2}(1-x) = 2x^{2} - x^{3}$, and $f'(x,1-x) = x(4 - 3x)$ is strictly positive for all $x \in [0,1]$, thus showing that $f(x,1-x)$ is strictly increasing and its maximum is $f(1,0) = 1$.
\item When $x = 0$, we have $f(0,y) = 5y^{3} - 10y^{2} + 5y = 5y(1-y)^{2}$, and $f'(0,y) = 15y^{2} - 20y + 5 = 5(1-y)(1-3y)$ is strictly positive for $y < \frac{1}{3}$. Therefore, $f(0,y)$ is strictly increasing on $\left[0,\frac{1}{3}\right)$ and strictly decreasing on $\left(\frac{1}{3}, 1\right]$, and its maximum is $f\left(0,\frac{1}{3}\right) = \frac{20}{27}$.
\item When $y = 0$, we have $f(x,0) = x$, with maximum $f(1,0) = 1$.
\end{itemize}
When $C$ is negative, from \eqref{esl_{b}_esl_{r}_relationship_1}, we get
\begin{multline}
\esl_{b} \leqslant \esl_{r}\Big\{\left(p_{\br}^{2} + 2p_{\br}p_{\rr}\right) + p_{\blbl}p_{\rr}^{2} \esl_{r}^{3}\Big\} \leqslant \esl_{r} \Big\{p_{\br}^{2} + 2p_{\br}p_{\rr} + p_{\blbl}p_{\rr}^{2}\Big\} \\ \leqslant \esl_{r} \left\{p_{\br}^{2} + 2p_{\br}p_{\rr} + p_{\rr}^{2}\right\} = \esl_{r}\left(p_{\br} + p_{\rr}\right)^{2}, \nonumber
\end{multline}
and the only way equality holds is if $p_{\br} = 1$. These observations tell us that unless $p_{\br} = 1$, we have $\esl_{b} < \esl_{r}$ if we assume that $\esl_{r}$ is strictly positive. Likewise, unless $q_{\br} = 1$, we have $\esl_{r} < \esl_{b}$ if we assume that $\esl_{b}$ is strictly positive. As these inequalities cannot hold simultaneously, we conclude that 
\begin{itemize}
\item when $p_{\br} = q_{\br} = 1$, we have $\esl_{b} = \esl_{r} = 1$,
\item and in all other cases, we have $\esl_{b} = \esl_{r} = 0$.
\end{itemize}
%An analogous conclusion holds for $\eew_{b}$ and $\eew_{r}$.

\subsection{Proof of Theorem~\ref{thm:main_example_2}}\label{subsec:main_example_2_proof} We prove the theorem only for normal games, as the argument goes through \emph{mutatis mutandis} for mis\`{e}re and escape games. For any vertex $v$, if $X_{v}$ is the number of blue children and $Y_{v}$ the number of red children, then conditioned on $\sigma(v) = b$, we have $X_{v} \sim \poi(\lambda p_{b})$ and $Y_{v} \sim \poi(\lambda p_{r})$, conditioned on $\sigma(v) = r$, we have $X_{v} \sim \poi(\lambda q_{b})$ and $Y_{v} \sim \poi(\lambda q_{r})$, and $X_{v}$ and $Y_{v}$ are always independent, all due to Poisson thinning.

The relevant generating functions in this example are
\begin{align}
& G_{b,\{b\}}(x_{b}) = \exp\left\{\lambda p_{b} (x_{b} - 1)\right\}, \quad G_{r,\{r\}}(x_{r}) = \exp\left\{\lambda q_{r} (x_{r} - 1)\right\};\nonumber\\
&G_{b,\{r\}}(x_{r}) = \exp\left\{\lambda p_{r} (x_{r}-1)\right\}, \quad G_{r,\{b\}}(x_{b}) = \exp\left\{\lambda q_{b} (x_{b} - 1)\right\}.\nonumber
\end{align}
These generating functions and the recursions of \S\ref{subsec:normal_recursions} yield $1 - \nw_{1,b} = f_{1} \circ f_{2}\left(1 - \nw_{1,b}\right)$ and $\nl_{1,b} = f_{1} \circ f_{2}\left(\nl_{1,b}\right)$, where $f_{1}(x) = \exp\left\{-\lambda p_{b} \exp\left\{-\lambda p_{r} x\right\}\right\}$ and $f_{2}(x) = \exp\left\{-\lambda q_{r} \exp\left\{-\lambda q_{b} x\right\}\right\}$. As $f_{1}$ and $f_{2}$ are strictly increasing, $1 - \nw_{1,b} = \max \FP\left(f_{1} \circ f_{2}\right)$ and $\nl_{1,b} = \min \FP\left(f_{1} \circ f_{2}\right)$. Therefore, $\nd_{1,b}$ is positive if and only if $f_{1} \circ f_{2}$ has at least two distinct fixed points in $[0,1]$. Let $x_{1}$ be a positive real with $f_{2}(x_{1}) = \frac{\ln(\lambda p_{b})}{2\lambda p_{r}}$, and let $0 < x_{2} < 1$ be \emph{any} constant. We now examine the signs of $f_{1} \circ f_{2}(x) - x$ at $x_{1}$ and $x_{2}$. Note that, given any $\epsilon > 0$, however small, we have
\begin{align}
\lim_{\lambda \rightarrow \infty} \lambda q_{r} e^{-\lambda q_{b} x_{2}} = 0 \implies f_{2}(x_{2}) = e^{-\lambda q_{r} e^{-\lambda q_{b} x_{2}}} \geqslant 1-\epsilon \text{ for all sufficiently large } \lambda.\nonumber
\end{align}
Therefore, we have 
\begin{align}
\lim_{\lambda \rightarrow \infty} f_{1} \circ f_{2}(x_{2}) = \exp\left\{-p_{b} \lim_{\lambda \rightarrow \infty} \left(\lambda e^{-\lambda p_{r} f_{2}(x_{2})}\right)\right\} \geqslant \exp\left\{-p_{b} \lim_{\lambda \rightarrow \infty} \left(\lambda e^{-\lambda p_{r} (1-\epsilon)}\right)\right\} = 1, \nonumber
\end{align}
so that for all $\lambda$ sufficiently large, we have $f_{1} \circ f_{2}(x_{2}) \geqslant x_{2}$ as $x_{2} < 1$. On the other hand,
\begin{align}%\label{x_{1}_value_normal}
f_{2}(x_{1}) = \frac{\ln(\lambda p_{b})}{2\lambda p_{r}} \implies %& \exp\left\{-\lambda q_{r} \exp\left\{-\lambda q_{b} x_{1}\right\}\right\} = \frac{\ln \lambda + \ln p_{b}}{2 \lambda p_{r}} \nonumber\\
%\implies & \lambda q_{r} \exp\left\{-\lambda q_{b} x_{1}\right\} = \ln 2 + \ln \lambda + \ln p_{r} - \ln\left(\ln \lambda + \ln p_{b}\right) \nonumber\\
%\implies & \exp\left\{-\lambda q_{b} x_{1}\right\} = \frac{\ln 2 + \ln \lambda + \ln p_{r} - \ln\left(\ln \lambda + \ln p_{b}\right)}{\lambda q_{r}} \nonumber\\
%\implies & \lambda q_{b} x_{1} = \ln \lambda + \ln q_{r} - \ln\left(\ln 2 + \ln \lambda + \ln p_{r} - \ln\left(\ln \lambda + \ln p_{b}\right)\right) \nonumber\\
& x_{1} = \frac{\ln \lambda + \ln q_{r} - \ln\left(\ln 2 + \ln \lambda + \ln p_{r} - \ln\left(\ln \lambda + \ln p_{b}\right)\right)}{\lambda q_{b}},\nonumber
\end{align}
so that 
\begin{align}%\label{f_{1}_composed_f_{2}_at_x_{1}}
f_{1}\circ f_{2}(x_{1}) - x_{1} %&= \exp\left\{-\lambda p_{b} \exp\left\{-\lambda p_{r} f_{2}(x_{1})\right\}\right\} - x_{1} \nonumber\\
%&= \exp\left\{-\lambda p_{b} \exp\left\{-\lambda p_{r} \cdot \frac{\ln \lambda + \ln p_{b}}{2 \lambda p_{r}}\right\}\right\} - \frac{\ln \lambda}{\lambda q_{b}} - \frac{\ln q_{r}}{\lambda q_{b}} \nonumber\\&+ \frac{\ln\left(\ln 2 + \ln \lambda + \ln p_{r} - \ln\left(\ln \lambda + \ln p_{b}\right)\right)}{\lambda q_{b}} \nonumber\\
&= \exp\left\{-\sqrt{\lambda p_{b}}\right\} - \frac{\ln \lambda}{\lambda q_{b}} - \frac{\ln q_{r}}{\lambda q_{b}} + \frac{\ln\left(\ln 2 + \ln \lambda + \ln p_{r} - \ln\left(\ln \lambda + \ln p_{b}\right)\right)}{\lambda q_{b}}.\nonumber
\end{align}
Noting that 
\begin{equation}
\frac{\ln\left(\ln 2 + \ln \lambda + \ln p_{r} - \ln\left(\ln \lambda + \ln p_{b}\right)\right)}{\lambda q_{b}} = o\left(\frac{\ln \lambda}{\lambda q_{b}}\right) \text{ and } \exp\left\{-\sqrt{\lambda p_{b}}\right\} = o\left(\frac{\ln \lambda}{\lambda}\right) \text{ as } \lambda \rightarrow \infty,\nonumber
\end{equation}
we conclude that $f_{1} \circ f_{2}(x_{1}) - x_{1} < 0$ for all sufficiently large $\lambda$. Finally, for every $\lambda > 0$, we have $f_{1} \circ f_{2}(0) > 0$ and $f_{1} \circ f_{2}(1) < 1$. Thus, for all sufficiently large $\lambda$, there is at least one root of $f_{1} \circ f_{2}(x) - x$ in the interval $(0, x_{1})$, at least one in $(x_{1}, x_{2})$, and a third in $(x_{2}, 1)$, guaranteeing that $\nd_{1,b} > 0$. Moreover, given \emph{any} $\epsilon > 0$, by choosing $x_{2} > 1 - \epsilon/2$ and then $\lambda$ sufficiently large so that $f_{1} \circ f_{2}(x_{2}) > x_{2}$ and $x_{1} < \epsilon/2$, we conclude that $\nl_{1,b} = \min \FP f_{1} \circ f_{2} < \epsilon/2$ and $1 - \nw_{1,b} = \max \FP f_{1} \circ f_{2} > 1 - \epsilon/2$, thus making $\nd_{1,b} > 1 - \epsilon$. Analogous conclusions hold for $\nd_{1,r}$, $\nd_{2,b}$ and $\nd_{2,r}$.

We now establish the remaining claims made in Theorem~\ref{thm:main_example_2}. If \eqref{poisson_second_cond} holds, then
\begin{align}%\label{f_{1}_composed_f_{2}_derivative}
(f_{1} \circ f_{2}(x) - x)' %= f'_{1}(f_{2}(x))f'_{2}(x) - 1 = \lambda^{2}p_{b}p_{r} \exp\left\{-\lambda p_{r} f_{2}(x)\right\} f_{1}(f_{2}(x)) \lambda^{2} q_{b} q_{r} \exp\left\{-\lambda q_{b} x\right\} f_{2}(x) - 1 \\
&= \lambda^{4} p_{b} p_{r} q_{b} q_{r} \exp\left\{-\lambda p_{r} f_{2}(x) - \lambda p_{b} \exp\left\{-\lambda p_{r} f_{2}(x)\right\} - \lambda q_{b} x -\lambda q_{r} \exp\left\{-\lambda q_{b} x\right\}\right\} - 1 \nonumber\\
&\leqslant \lambda^{4} p_{b} p_{r} q_{b} q_{r} \exp\left\{-\lambda p_{r} e^{-\lambda q_{r}} - \lambda p_{b} \exp\left\{-\lambda p_{r} \exp\left\{-\lambda q_{r} e^{-\lambda q_{b}}\right\}\right\} - \lambda q_{r} e^{-\lambda q_{b}}\right\} - 1, \nonumber
\end{align} 
is strictly negative. Thus, $f_{1} \circ f_{2}(x) - x$ is strictly decreasing on $[0,1]$ and has precisely one root in $[0,1]$. Hence $\nd_{1,b} = 0$ in this case. In particular, since $p_{b}p_{r} = p_{b}(1-p_{b}) \leqslant \frac{1}{4}$, and likewise, $q_{b} q_{r} \leqslant \frac{1}{4}$, the draw probabilities are $0$ if we have $\lambda \leqslant 2$.

Setting $f_{3}(x) = e^{-\lambda p_{r} x}$ and $f_{4}(x) = e^{-\lambda q_{b} x}$, we have $(f_{1} \circ f_{2}(x))'' = \lambda^{5} p_{b} p_{r} q_{b}^{2} q_{r} f_{3}(f_{2}(x)) f_{1}(f_{2}(x)) f_{4}(x) f_{2}(x) \left[\lambda^{2} p_{r} q_{r} f_{4}(x) f_{2}(x) \left\{\lambda p_{b} f_{3}(f_{2}(x)) - 1\right\} + \left\{\lambda q_{r} f_{4}(x) - 1\right\}\right]$, so that the sign of $(f_{1} \circ f_{2}(x))''$ is the same as that of the function 
\begin{equation}
f_{5}(x) = \lambda^{2} p_{r} q_{r} f_{4}(x) f_{2}(x) \left\{\lambda p_{b} f_{3}(f_{2}(x)) - 1\right\} + \left\{\lambda q_{r} f_{4}(x) - 1\right\}. \nonumber
\end{equation}
Both $\lambda p_{b} f_{3}(f_{2}(x)) - 1$ and $\lambda q_{r} f_{4}(x) - 1$ are strictly decreasing in $x$ on $[0,1]$, so that the minima are attained at $x = 1$. If \eqref{normal_poisson_cond} holds, both $f_{1}$ and $f_{2}$ are strictly convex on $[0,1)$. Since $f_{1}$ and $f_{2}$ are also strictly increasing, $f_{1} \circ f_{2}$ is strictly convex on $[0,1)$. Since $f_{1} \circ f_{2}(0) > 0$ and $f_{1} \circ f_{2}(1) < 1$, there is precisely one fixed point of $f_{1} \circ f_{2}$ in $[0,1]$, and therefore, $\nd_{1,b} = 0$.

\section{Proof of Theorem~\ref{thm:main_2}}\label{sec:main_2_proof}
\subsection{Proof of Theorem~\ref{thm:main_2}, part \ref{main_2_part_1}}\label{subsec:main_2_part_1}
For every $j \in [m]$, we show that $\NW_{1,j} \subset \ESW_{j}$, $\NL_{2,j} \subseteq \EEL_{j}$, $\MW_{1,j} \subseteq \ESW_{j}$ and $\ML_{2,j} \subseteq \EEL_{j}$, which immediately gives us the desired conclusion. 

Fix a realization $T$ of $\mathcal{T}$. If $v \in \NW_{1,j}$, the normal game with $v$ as the initial vertex and P1 playing the first round is won by P1. Consider an escape game on $T(v)$ with $v$ as the initial vertex and Stopper playing the first round. Stopper employs the exact same strategy against Escaper as P1 does against P2 in the aforementioned normal game. If the normal game culminates in the token reaching a vertex $u$, at an odd distance from $v$, such that no child of $u$ has colour in $[m] \setminus S_{\sigma(u)}$, then the escape game also reaches $u$ in the corresponding round, and Escaper, whose turn it is to move, is unable to do so as there are no outgoing edges from $u$ that are permissible for her. Thus, Stopper wins the escape game, establishing that $v \in \ESW_{j}$ as well. Likewise, $\NL_{2,j} \subseteq \EEL_{j}$.

Now consider $v$ in $\MW_{1,j}$. The mis\`{e}re game with $v$ as the initial vertex and P1 playing the first round is won by P1. Consider an escape game on $T(v)$ with $v$ as the initial vertex and Stopper playing the first round. Stopper employs the exact same strategy against Escaper as P1 does against P2 in the aforementioned mis\`{e}re game. If the mis\`{e}re game culminates in the token reaching a vertex $u$, at an even distance from $v$, such that no child of $u$ has colour in $S_{\sigma(u)}$, then the escape game also reaches $u$ in the corresponding round, and Stopper, whose turn it is to move, is unable to do so since there are no outgoing edges from $u$ that are permissible for her. Thus, Stopper wins the escape game, proving that $v \in \ESW_{j}$. Likewise, $\ML_{2,j} \subseteq \EEL_{j}$.

%\begin{remark}
%Note that, if we consider an escape game where the set of permissible edges for Stopper is the same as that of P2 and the set of permissible edges for Escaper is the same as that of P1, then instead of \ref{main_2_part_1}, we would have the following inequalities:
%\begin{equation}
%\nw_{2,j} \leqslant \esw_{j}, \quad \nl_{1,j} \leqslant \eel_{j}, \quad \mw_{2,j} \leqslant \esw_{j} \quad \text{and} \quad \ml_{1,j} \leqslant \eel_{j}.
%\end{equation}
%\end{remark}

\subsection{Proof of Theorem~\ref{thm:main_2}, part \ref{main_2_part_2}}\label{subsec:main_2_part_2} Given $x_{1}, \ldots, x_{r}$ in $[0,1]$ and $n_{1}, \ldots, n_{r} \in \mathbb{N}_{0}$ such that not every $n_{i}$ is $0$, and $x_{1} = \min\left\{x_{i}: i \in [r]\right\}$, we can write
\begin{multline}
1 - \prod_{t=1}^{r} (1 - x_{t})^{n_{t}} %&= 1 - \left(1 - x_{1}\right)^{\sum_{t=1}^{r} n_{t}} + \left(1 - x_{1}\right)^{\sum_{t=1}^{r} n_{t}} - \left(1 - x_{1}\right)^{\sum_{t=1}^{r-1} n_{t}} \left(1 - x_{r}\right)^{n_{r}} + \left(1 - x_{1}\right)^{\sum_{t=1}^{r-1} n_{t}} \left(1 - x_{r}\right)^{n_{r}} - \left(1 - x_{1}\right)^{\sum_{t=1}^{r-2} n_{t}} \left(1 - x_{r-1}\right)^{n_{r-1}} \left(1 - x_{r}\right)^{n_{r}} + \cdots + \left(1 - x_{1}\right)^{\sum_{t=1}^{i}n_{t}} \prod_{t=i+1}^{r} \left(1 - x_{t}\right)^{n_{t}} - \left(1 - x_{1}\right)^{\sum_{t=1}^{i-1}n_{t}} \prod_{t=i}^{r} \left(1 - x_{t}\right)^{n_{t}} + \cdots + \left(1 - x_{1}\right)^{n_{1}+n_{2}}\prod_{t=3}^{r} \left(1 - x_{t}\right)^{n_{t}} - \prod_{t=1}^{r}\left(1-x_{t}\right)^{n_{t}} \nonumber\\
= 1 - \left(1 - x_{1}\right)^{\sum_{t=1}^{r} n_{t}} + \sum_{i=2}^{r} \left(1 - x_{1}\right)^{\sum_{t=1}^{i-1}n_{t}} \left\{\left(1-x_{1}\right)^{n_{i}} - \left(1 - x_{i}\right)^{n_{i}}\right\} \prod_{t=i+1}^{r} \left(1 - x_{t}\right)^{n_{t}} 
\\= x_{1} \sum_{i=0}^{\sum_{t=1}^{r} n_{t}-1} \left(1 - x_{1}\right)^{i} + \sum_{i=2}^{r} \left(1 - x_{1}\right)^{\sum_{t=1}^{i-1}n_{t}} \left(x_{i} - x_{1}\right) \left\{\sum_{j=0}^{n_{i}-1}\left(1-x_{1}\right)^{j}\left(1 - x_{i}\right)^{n_{i}-1-j}\right\} \prod_{t=i+1}^{r} \left(1 - x_{t}\right)^{n_{t}}
\\ \geqslant x_{1} + \sum_{i=2}^{r} (x_{i}-x_{1}) n_{i} \left(1 - x_{i}\right)^{n_{i}-1} \prod_{t \in [r] \setminus \{i\}} \left(1 - x_{t}\right)^{n_{t}}. \nonumber
%&\geqslant 1 - \left(1 - \min\left\{x_{t}: t \in [r]\right\}\right)^{\sum_{t=1}^{r} n_{t}} \\= \min\left\{x_{t}: t \in [r]\right\} \sum_{i=0}^{\sum_{t=1}^{r}n_{t}-1}\left(1 - \min\left\{x_{t}: t \in [r]\right\}\right)^{i} \geqslant \min\left\{x_{t}: t \in [r]\right\}.\nonumber
\end{multline}
For $v$ to be in $\ESW_{j}$, either $v$ has no child of colour $k$ for any $k \in S_{j}$, or $v$ has at least one child $u$ in $\EEL_{k}$ for some $k \in S_{j}$. Letting $k_{0} \in S_{j}$ be such that $\eel_{k_{0}} = \min\left\{\eel_{k}: k \in S_{j}\right\}$, we get
\begin{align}
\esw_{j} &= \alpha_{j} + \sum_{\substack{n_{r} \in \mathbb{N}_{0}: r \in [m]\\\left(n_{k}: k \in S_{j}\right) \neq \mathbf{0}_{S_{j}}}}\left\{1 - \prod_{k \in S_{j}}\left(1 - \eel_{k}\right)^{n_{k}}\right\} \chi_{j}(n_{1}, \ldots, n_{m}) \nonumber\\
&= \alpha_{j} + \sum_{\mathbf{n}_{S_{j}} = \left(n_{k}: k \in S_{j}\right) \in \mathbb{N}_{0} \setminus \left\{\mathbf{0}_{S_{j}}\right\}}\Bigg\{\eel_{k_{0}} + \sum_{k \in S_{j} \setminus \{k_{0}\}} \left(\eel_{k} - \eel_{k_{0}}\right) n_{k} \left(1 - \eel_{k}\right)^{n_{k}-1} \nonumber\\&\prod_{t \in S_{j} \setminus \{k\}}\left(1 - \eel_{t}\right)^{n_{t}}\Bigg\} \chi_{j,S_{j}}\left(\mathbf{n}_{S_{j}}\right) \nonumber\\
&= \alpha_{j} + \eel_{k_{0}}\left(1 - \alpha_{j}\right) + \sum_{k \in S_{j}} \left(\eel_{k} - \eel_{k_{0}}\right) \sum_{\mathbf{n}_{S_{j}} = \left(n_{k}: k \in S_{j}\right) \in \mathbb{N}_{0} \setminus \left\{\mathbf{0}_{S_{j}}\right\}} n_{k} \left(1 - \eel_{k}\right)^{n_{k}-1} \nonumber\\& \prod_{t \in S_{j} \setminus \{k\}}\left(1 - \eel_{t}\right)^{n_{t}} \chi_{j,S_{j}}\left(\mathbf{n}_{S_{j}}\right) \nonumber\\
&= \alpha_{j} + \eel_{k_{0}}\left(1 - \alpha_{j}\right) + \sum_{k \in S_{j}} \left(\eel_{k} - \eel_{k_{0}}\right) \partial_{k} G_{j,S_{j}}\left(1 - \eel_{t}: t \in S_{j}\right). \nonumber
%\geqslant \alpha_{j} + \min\left\{\eel_{k}: k \in S_{j}\right\} \\ \sum_{\substack{n_{r} \in \mathbb{N}_{0}: r \in [m]\\\left(n_{k}: k \in S_{j}\right) \neq \mathbf{0}_{S_{j}}}}\chi_{j}(n_{1}, \ldots, n_{m}) 
%= \alpha_{j} + \min\left\{\eel_{k}: k \in S_{j}\right\}(1-\alpha_{j}) \geqslant \min\left\{\eel_{k}: k \in S_{j}\right\},\nonumber
\end{align}
Observing that $\alpha_{j} + \eel_{k_{0}}\left(1 - \alpha_{j}\right) \geqslant \alpha_{j}\eel_{k_{0}} + \eel_{k_{0}}\left(1 - \alpha_{j}\right) = \eel_{k_{0}}$ gives the second inequality of \eqref{main_2_part_2_eq_1}. The remaining two claims of \ref{main_2_part_2} are established via very similar computations.

\subsection{Proof of Theorem~\ref{thm:main_2}, part \ref{main_2_part_3}}\label{subsec:main_2_part_3} Assume the hypothesis of \ref{main_2_part_3}, and $\bnl_{1}^{(n)} \preceq \bml_{1}^{(n)}$ for some $n \in \mathbb{N}$ (the base case, $n = 0$, is immediate as $\bml_{1}^{(0)} = \bnl_{1}^{(0)} = \mathbf{0}_{[m]}$). Then $\bml_{1}^{(n+2)} - \bnl_{1}^{(n+2)}$ equals
\begin{align}%\label{main_2_part_3_eq_1}
F_{N}\left(\mathbf{1}_{[m]} - \bnl_{1}^{(n)}\right) - F_{M}\left(\mathbf{1}_{[m]} - \bml_{1}^{(n)}\right) \geqslant F_{N}\left(\mathbf{1}_{[m]} - \bnl_{1}^{(n)}\right) - F_{M}\left(\mathbf{1}_{[m]} - \bnl_{1}^{(n)}\right), \nonumber
\end{align}
where the inequality follows from the induction hypothesis and since $F_{M}$ is monotonically increasing.
\begin{multline}%\label{nth_difference_main_2_part_3}
%&\alpha_{j} + 1 - G_{j,S_{j}}\left(1 - G_{k,[m] \setminus S_{k}}\left(1 - \ml_{1,\ell}^{(n)}: \ell \in [m] \setminus S_{k}\right) + \beta_{k}: k \in S_{j}\right) \nonumber\\&- 1 + G_{j, S_{j}}\left(1 - G_{k, [m] \setminus S_{k}}\left(1 - \ml_{1,\ell}^{(n)}: \ell \in [m] \setminus S_{k}\right): k \in S_{j}\right) \nonumber\\
\F_{N,j}\left(\mathbf{1}_{[m]} - \bnl_{1}^{(n)}\right) - F_{M,j}\left(\mathbf{1}_{[m]} - \bnl_{1}^{(n)}\right) = - G_{j, S_{j}}\left(1 - G_{k, [m] \setminus S_{k}}\left(1 - \nl_{1,\ell}^{(n)}: \ell \in [m] \setminus S_{k}\right): k \in S_{j}\right)\\+ G_{j,S_{j}}\left(1 - G_{k,[m] \setminus S_{k}}\left(1 - \nl_{1,\ell}^{(n)}: \ell \in [m] \setminus S_{k}\right) + \beta_{k}: k \in S_{j}\right) - \alpha_{j} \geqslant  - \alpha_{j} + \\ \sum_{i \in S_{j}} \beta_{i} \partial_{i} G_{j,S_{j}}\left(1 - G_{k,[m] \setminus S_{k}}\left(1 - \nl_{1,\ell}^{(n)}: \ell \in [m] \setminus S_{k}\right): k \in S_{j}\right) = \sum_{i \in S_{j}} \beta_{i} \partial_{i} G_{j,S_{j}}\left(\nw_{2,k}^{(n+1)}: k \in S_{j}\right) - \alpha_{j}, \nonumber
\end{multline}
where the inequality holds as $G_{j,S_{j}}$ is convex (see, for example, [\cite{simon_blume}, Theorem 21.3]) and the last equality follows from \eqref{normal_recur_2}. Given the continuously differentiable pgf $G: [0,1]^{r} \rightarrow [0,1]$ corresponding to a probability distribution $\chi$ supported on $\mathbb{N}_{0}^{r}$ for $r \in \mathbb{N}$, we have, for $i \in [r]$, 
\begin{align}
\partial_{i} G(x_{1}, \ldots, x_{r}) &= \sum_{n_{k} \in \mathbb{N}_{0}: k \in [r]} \left(\prod_{k \in [r] \setminus \{i\}} x_{k}^{n_{k}}\right) n_{i} x_{i}^{n_{i}-1} \chi(n_{1}, \ldots, n_{r}), \nonumber
\end{align}
which is a power series with non-negative coefficients. If $(x_{1}, \ldots, x_{r})$ and $(y_{1}, \ldots, y_{r})$ in $[0,1]^{r}$ satisfy $(x_{1}, \ldots, x_{r}) \preceq (y_{1}, \ldots, y_{r})$, then $\partial_{i} G(x_{1}, \ldots, x_{r}) \leqslant \partial_{i} G(y_{1}, \ldots, y_{r})$. Since $0 = \nw_{2,k}^{(1)} \leqslant \nw_{2,k}^{(n+1)}$ for all $n \in \mathbb{N}_{0}$ and all $k \in S_{j}$, hence $\partial_{i} G_{j,S_{j}}\left(\mathbf{0}_{S_{j}}\right) \leqslant \partial_{i} G_{j,S_{j}}\left(\nw_{2,k}^{(n+1)}: k \in S_{j}\right)$. Moreover, $\partial_{i} G_{j,S_{j}}\left(\mathbf{0}_{S_{j}}\right) = \Prob\left[X_{v,i} = 1, X_{v,k} = 0 \text{ for all } k \in S_{j} \setminus \{i\}\big|\sigma(v) = j\right]$. Thus, if
%From this observation, the monotonically increasing nature of $G_{k, [m] \setminus S_{k}}$ for every $k$, \eqref{eq:misere_compactness_consequence} and the hypotheses of \ref{main_2_part_3}, we conclude that, as $n \rightarrow \infty$,
%\begin{align}
%& G_{k, [m] \setminus S_{k}}\left(1 - \ml_{1,\ell}^{(n)}: \ell \in [m] \setminus S_{k}\right) \downarrow G_{k, [m] \setminus S_{k}}\left(1 - \ml_{1,\ell}: \ell \in [m] \setminus S_{k}\right) \nonumber\\
%& 1 + \beta_{k} - G_{k, [m] \setminus S_{k}}\left(1 - \ml_{1,\ell}^{(n)}: \ell \in [m] \setminus S_{k}\right) \uparrow 1 + \beta_{k} - G_{k, [m] \setminus S_{k}}\left(1 - \ml_{1,\ell}: \ell \in [m] \setminus S_{k}\right) \nonumber\\
%\Rightarrow & \partial_{i} G_{j,S_{j}}\left(1 + \beta_{k} - G_{k, [m] \setminus S_{k}}\left(1 - \ml_{1,\ell}^{(n)}: \ell \in [m] \setminus S_{k}\right): k \in S_{j}\right) \nonumber\\&\uparrow \partial_{i} G_{j,S_{j}}\left(1 + \beta_{k} - G_{k, [m] \setminus S_{k}}\left(1 - \ml_{1,\ell}: \ell \in [m] \setminus S_{k}\right): k \in S_{j}\right) \text{ for each } i \in S_{j}\nonumber\\
%\Rightarrow & \sum_{i \in S_{j}}\beta_{i} \partial_{i} G_{j,S_{j}}\left(1 + \beta_{k} - G_{k, [m] \setminus S_{k}}\left(1 - \ml_{1,\ell}^{(n)}: \ell \in [m] \setminus S_{k}\right): k \in S_{j}\right) \uparrow \sum_{i \in S_{j}} \beta_{i}\partial_{i} G_{j,S_{j}}\left(\mw_{2,k}: k \in S_{j}\right), \nonumber
%\end{align}
\eqref{main_2_part_3_cond_1} holds, we have $\bnl_{1}^{(n+2)} \preceq \bml_{1}^{(n+2)}$, thus completing the inductive proof. Taking limits as $n \rightarrow \infty$ gives the rest. 

The second part of \ref{main_2_part_3} is also proved via induction. Assuming $\bnw_{1}^{(n)} \prec \bmw_{1}^{(n)}$, for each $j \in [m]$,
\begin{align}
\mw_{1,j}^{(n+2)} - \nw_{1,j}^{(n+2)} &= F_{M,j}\left(\bmw_{1}^{(n)}\right) - F_{N,j}\left(\bnw_{1}^{(n)}\right) \geqslant F_{M,j}\left(\bmw_{1}^{(n)}\right) - F_{N,j}\left(\bmw_{1}^{(n)}\right) \nonumber\\
&= \alpha_{j} -  G_{j,S_{j}}\left(1 - G_{k,[m] \setminus S_{k}}\left(\mw_{1,\ell}^{(n)}: \ell \in [m] \setminus S_{k}\right) + \beta_{k}: k \in S_{j}\right) \nonumber\\&+ G_{j,S_{j}}\left(1 - G_{k,[m] \setminus S_{k}}\left(\mw_{1,\ell}^{(n)}: \ell \in [m] \setminus S_{k}\right): k \in S_{j}\right) \nonumber\\
&\geqslant \alpha_{j} - \sum_{i \in S_{j}} \beta_{i} \partial_{i} G_{j,S_{j}}\left(1 - G_{k,[m] \setminus S_{k}}\left(\mw_{1,\ell}^{(n)}: \ell \in [m] \setminus S_{k}\right) + \beta_{k}: k \in S_{j}\right) \nonumber\\
&= \alpha_{j} - \sum_{i \in S_{j}} \beta_{i} \partial_{i} G_{j,S_{j}}\left(1 - \ml_{2,k}^{(n+1)}: k \in S_{j}\right). \nonumber
\end{align}
From $0 = \ml_{2,k}^{(1)} \leqslant \ml_{2,k}^{(n+1)}$ for all $n \in \mathbb{N}_{0}$ and the monotonically increasing $\partial_{i} G_{j,S_{j}}$, it is enough if we have $\alpha_{j} \geqslant \sum_{i \in S_{j}} \beta_{i} \partial_{i} G_{j,S_{j}}\left(\mathbf{1}_{S_{j}}\right)$, which is equivalent to \eqref{main_2_part_3_cond_2}.

%This brings us to the end of the proof of \ref{main_2_part_3} of Theorem~\ref{thm:main_2}.

\section{Proof of Theorem~\ref{thm:main_3}}\label{sec:main_3_proof}
The proof of Theorem~\ref{thm:main_3} happens via several lemmas, as follows. Recall the law $\mathcal{L}_{\bm{\chi}}$ of $\mathcal{T}_{[m], \mathbf{p}, \bm{\chi}}$ defined in \S\ref{subsec:main_results}, and the metric $d_{0}$ from \eqref{metric_d_{0}}.
\begin{lemma}\label{main_3_lem_1}
The probabilities $\alpha_{j,\bm{\chi}}$ and $\beta_{j,\bm{\chi}}$ are continuous functions of $\mathcal{L}_{\bm{\chi}}$ with respect to $d_{0}$.
\end{lemma}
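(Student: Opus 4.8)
The plan is to express each of $\alpha_{j,\bm{\chi}}$ and $\beta_{j,\bm{\chi}}$ as the $\chi_{j}$-probability of a \emph{fixed} (i.e.\ law-independent) subset of $\mathbb{N}_{0}^{m}$, and then to invoke the defining property of the total variation distance to deduce that these quantities depend on $\mathcal{L}_{\bm{\chi}}$ in a $1$-Lipschitz, hence continuous, manner with respect to $d_{0}$.

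Concretely, set $A_{j} = \left\{(n_{1}, \ldots, n_{m}) \in \mathbb{N}_{0}^{m}: n_{k} = 0 \text{ for all } k \in S_{j}\right\}$ and $B_{j} = \left\{(n_{1}, \ldots, n_{m}) \in \mathbb{N}_{0}^{m}: n_{k} = 0 \text{ for all } k \in [m] \setminus S_{j}\right\}$. By the description of $\alpha_{j}$ and $\beta_{j}$ in \S\ref{subsec:main_results} (equivalently, $\alpha_{j,\bm{\chi}} = G_{j,S_{j},\bm{\chi}}(\mathbf{0}_{S_{j}})$ and $\beta_{j,\bm{\chi}} = G_{j,[m] \setminus S_{j},\bm{\chi}}(\mathbf{0}_{[m] \setminus S_{j}})$), one has $\alpha_{j,\bm{\chi}} = \chi_{j}(A_{j})$ and $\beta_{j,\bm{\chi}} = \chi_{j}(B_{j})$, where the sets $A_{j}, B_{j}$ do not depend on $\bm{\chi}$. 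The key step is then the elementary observation that for any probability distributions $\mu, \nu$ on $\mathbb{N}_{0}^{m}$ and any $C \subseteq \mathbb{N}_{0}^{m}$, one has $\left|\mu(C) - \nu(C)\right| \leqslant ||\mu - \nu||_{\tv}$. Applying this with $\mu = \chi_{j}$, $\nu = \eta_{j}$ and $C \in \{A_{j}, B_{j}\}$ yields
\[
\left|\alpha_{j,\bm{\chi}} - \alpha_{j,\bm{\eta}}\right| \leqslant ||\chi_{j} - \eta_{j}||_{\tv} \leqslant d_{0}\left(\mathcal{L}_{\bm{\chi}}, \mathcal{L}_{\bm{\eta}}\right) \quad \text{and} \quad \left|\beta_{j,\bm{\chi}} - \beta_{j,\bm{\eta}}\right| \leqslant d_{0}\left(\mathcal{L}_{\bm{\chi}}, \mathcal{L}_{\bm{\eta}}\right)
\]
for every $j \in [m]$, which is precisely the asserted continuity (indeed Lipschitz continuity with constant $1$).

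There is no genuine obstacle here: the statement is essentially a direct consequence of the definitions, and the only minor point of care is to match whichever normalization of the total variation distance is in force (the $\sup$-over-events form versus the $\frac{1}{2}\sum$-over-atoms form), but the inequality $\left|\mu(C) - \nu(C)\right| \leqslant ||\mu - \nu||_{\tv}$ holds under both conventions, so the argument is unaffected. This lemma will serve as the base continuity input for the subsequent lemmas of \S\ref{sec:main_3_proof}, where it is combined with the fixed-point characterizations of Theorem~\ref{thm:main_1} to obtain the (semi)continuity of the outcome probabilities.
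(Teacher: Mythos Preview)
Your proof is correct and follows essentially the same approach as the paper: both express $\alpha_{j,\bm{\chi}}$ (and $\beta_{j,\bm{\chi}}$) as the $\chi_{j}$-probability of a fixed subset of $\mathbb{N}_{0}^{m}$ and then bound the difference via the total variation distance. Your version is in fact slightly cleaner, since by invoking the $\sup$-over-events form of $\tv$ directly you obtain the Lipschitz constant $1$, whereas the paper passes through the $\ell^{1}$ sum over atoms and ends up with the constant $2$; either bound suffices for continuity.
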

\begin{proof}
We show the proof only for $\alpha_{j, \bm{\chi}}$. Given laws $\mathcal{L}_{\bm{\chi}}$ and $\mathcal{L}_{\bm{\eta}}$, we have
\begin{align}
\left|\alpha_{j,\bm{\chi}} - \alpha_{j,\bm{\eta}}\right| &= \left|\chi_{j,S_{j}}\left(\mathbf{0}_{S_{j}}\right) - \eta_{j,S_{j}}\left(\mathbf{0}_{S_{j}}\right)\right| \nonumber\\
&\leqslant \sum_{\mathbf{n}_{[m] \setminus S_{j}} = \left(n_{k}: k \in [m] \setminus S_{j}\right) \in \mathbb{N}_{0}^{|[m] \setminus S_{j}|}} \left|\chi_{j}\left(\mathbf{0}_{S_{j}} \vee \mathbf{n}_{[m] \setminus S_{j}}\right) - \eta_{j}\left(\mathbf{0}_{S_{j}} \vee \mathbf{n}_{[m] \setminus S_{j}}\right)\right| \leqslant 2d_{0}\left(\mathcal{L}_{\bm{\chi}}, \mathcal{L}_{\bm{\eta}}\right).\nonumber \qedhere
\end{align}
%The proof for $\beta_{j,\bm{\chi}}$ follows along similar steps. Given any $\epsilon > 0$, if $d_{0}\left(\mathcal{L}_{\bm{\chi}}, \mathcal{L}_{\bm{\eta}}\right) < \epsilon/2$, then we shall have $\left|\alpha_{j,\bm{\chi}} - \alpha_{j,\bm{\eta}}\right| < \epsilon$, as desired. 
\end{proof}

\begin{lemma}\label{main_3_lem_2}
Fix $j \in [m]$, $S \subset [m]$ and $\alpha \in (0,1)$. Set $S_{\alpha} = \sum_{n \in \mathbb{N}} n \alpha^{n-1}$. Fix $\mathbf{x}_{S} = \left(x_{k}: k \in S\right)$ and $\mathbf{y}_{S} = \left(y_{k}: k \in S\right)$ in $[0,\alpha]^{S}$. Then $\mathcal{L}_{\bm{\chi}}$ satisfies the following inequality:
\begin{equation}\label{main_3_lem_2_eq_1}
\left|G_{j,S,\bm{\chi}}\left(\mathbf{x}_{S}\right) - G_{j,S,\bm{\chi}}\left(\mathbf{y}_{S}\right)\right| \leqslant \max_{k \in S} \left|x_{k} - y_{k}\right| S_{\alpha}.
\end{equation}
If $\mathcal{E}_{j,S,\bm{\chi}} = \E_{\mathcal{L}_{\bm{\chi}}}\left[\sum_{k \in S} X_{v,k}\big|\sigma(v) = j\right]$ is finite, for all $\mathbf{x}_{S}$ and $\mathbf{y}_{S}$ in $[0,1]^{S}$, we have
\begin{equation}\label{main_3_lem_2_eq_2}
\left|G_{j,S,\bm{\chi}}\left(\mathbf{x}_{S}\right) - G_{j,S,\bm{\chi}}\left(\mathbf{y}_{S}\right)\right| \leqslant \max_{k \in S} \left|x_{k} - y_{k}\right|\mathcal{E}_{j,S,\bm{\chi}}.
\end{equation}  
\end{lemma}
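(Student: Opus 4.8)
The plan is to reduce everything to a term-by-term estimate on the monomials of the power series defining $G_{j,S,\bm{\chi}}$. Writing $G_{j,S,\bm{\chi}}(\mathbf{x}_S) = \sum_{\mathbf{n}_S \in \mathbb{N}_0^{|S|}} \bigl(\prod_{k\in S}x_k^{n_k}\bigr)\chi_{j,S}(\mathbf{n}_S)$ and subtracting the same expression evaluated at $\mathbf{y}_S$, it suffices to bound $\bigl|\prod_{k\in S}x_k^{n_k} - \prod_{k\in S}y_k^{n_k}\bigr|$ for each fixed $\mathbf{n}_S$ and then sum the resulting bounds against $\chi_{j,S}(\mathbf{n}_S)$; since $\chi_{j,S}$ is a probability distribution and every bound in sight is nonnegative, all rearrangements and interchanges of summation are justified by Tonelli.

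For the fixed-$\mathbf{n}_S$ estimate I would enumerate $S = \{k_1,\dots,k_r\}$ and invoke the telescoping product identity
\[
\prod_{i=1}^{r} a_i - \prod_{i=1}^{r} b_i = \sum_{i=1}^{r}\Bigl(\prod_{l<i}a_l\Bigr)(a_i-b_i)\Bigl(\prod_{l>i}b_l\Bigr)
\]
with $a_i = x_{k_i}^{n_{k_i}}$ and $b_i = y_{k_i}^{n_{k_i}}$. When $\mathbf{x}_S,\mathbf{y}_S\in[0,\alpha]^S$, I bound the prefix $\prod_{l<i}x_{k_l}^{n_{k_l}} \le \alpha^{\sum_{l<i}n_{k_l}}$, the suffix $\prod_{l>i}y_{k_l}^{n_{k_l}} \le \alpha^{\sum_{l>i}n_{k_l}}$, and the middle factor via the identity $\bigl|x_{k_i}^{n_{k_i}} - y_{k_i}^{n_{k_i}}\bigr| = |x_{k_i}-y_{k_i}|\,\sum_{t=0}^{n_{k_i}-1}x_{k_i}^{t}y_{k_i}^{n_{k_i}-1-t} \le |x_{k_i}-y_{k_i}|\,n_{k_i}\alpha^{n_{k_i}-1}$. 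Multiplying the three, the powers of $\alpha$ collapse to a single $\alpha^{N-1}$ with $N := \sum_{k\in S}n_k$, so the $i$-th summand is at most $\bigl(\max_{k\in S}|x_k-y_k|\bigr)\,n_{k_i}\alpha^{N-1}$, and summing over $i$ gives $\bigl|\prod_{k}x_k^{n_k} - \prod_{k}y_k^{n_k}\bigr| \le \bigl(\max_{k\in S}|x_k-y_k|\bigr)\,N\alpha^{N-1}$. The reason for requiring \emph{all} coordinates to lie in $[0,\alpha]$ is exactly this collapse: it prevents the naive bound from carrying a spurious factor $|S|$.

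Summing against $\chi_{j,S}(\mathbf{n}_S)$ and setting $N = \sum_{k\in S}X_{v,k}$, this yields
\[
\bigl|G_{j,S,\bm{\chi}}(\mathbf{x}_S) - G_{j,S,\bm{\chi}}(\mathbf{y}_S)\bigr| \le \Bigl(\max_{k\in S}|x_k-y_k|\Bigr)\,\E_{\mathcal{L}_{\bm{\chi}}}\bigl[N\alpha^{N-1}\big|\sigma(v)=j\bigr],
\]
and it remains to note $\E_{\mathcal{L}_{\bm{\chi}}}[N\alpha^{N-1}\big|\sigma(v)=j] = \sum_{n\ge 1} n\alpha^{n-1}\Prob[N=n\big|\sigma(v)=j] \le S_\alpha$, because each coefficient $n\alpha^{n-1}$ is one of the nonnegative terms of $S_\alpha = \sum_{n\in\mathbb{N}}n\alpha^{n-1}$ and the conditional probabilities sum to at most $1$; this proves \eqref{main_3_lem_2_eq_1}. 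For \eqref{main_3_lem_2_eq_2} I would rerun the identical argument with $[0,1]^S$ in place of $[0,\alpha]^S$: every $\alpha$ becomes $1$, the $i$-th telescoping summand is at most $\bigl(\max_k|x_k-y_k|\bigr)n_{k_i}$, hence $\bigl|\prod_k x_k^{n_k} - \prod_k y_k^{n_k}\bigr| \le \bigl(\max_k|x_k-y_k|\bigr)N$, and summing against $\chi_{j,S}$ produces the factor $\E_{\mathcal{L}_{\bm{\chi}}}[N\big|\sigma(v)=j] = \mathcal{E}_{j,S,\bm{\chi}}$, which is exactly the finiteness hypothesis invoked in the statement.

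This is largely bookkeeping once the telescoping identity is written down; the only subtle point is (i) ensuring the powers of $\alpha$ telescope to $\alpha^{N-1}$ rather than leaving an extra $|S|$, which is precisely what the hypothesis $\mathbf{x}_S,\mathbf{y}_S\in[0,\alpha]^S$ buys, and (ii) the routine Tonelli justification for interchanging the finite sum over $i$ with the sum over $\mathbf{n}_S$. I anticipate no real obstacle.
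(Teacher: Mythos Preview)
Your proposal is correct and follows essentially the same approach as the paper: the paper also uses the telescoping product identity, bounds each summand by $\max_k|x_k-y_k|\,n_{k_i}\alpha^{N-1}$, sums over $i$ to get $\max_k|x_k-y_k|\,N\alpha^{N-1}$, and then sums against $\chi_{j,S}$ by grouping on $N=\sum_{k\in S}n_k$ and bounding $\sum_{M}M\alpha^{M-1}\Prob[N=M]\le S_\alpha$. The second inequality is handled identically in both, replacing $\alpha$ by $1$ throughout.
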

\begin{proof}
Given tuples $(u_{1}, \ldots, u_{r})$ and $(v_{1}, \ldots, v_{r})$ in $[0,\alpha]^{r}$ and $(n_{1}, \ldots, n_{r}) \in \mathbb{N}_{0}$, we can write
\begin{align}
\left|\prod_{k \in [r]} u_{k}^{n_{k}} - \prod_{k \in [r]} v_{k}^{n_{k}}\right| &\leqslant \sum_{i \in [r]} \left(\prod_{j=1}^{i-1} u_{j}^{n_{j}}\right) \left|u_{i}^{n_{i}} - v_{i}^{n_{i}}\right| \left(\prod_{j=i+1}^{r} v_{j}^{n_{j}}\right)\nonumber\\
&= \sum_{i \in [r]} \left(\prod_{j=1}^{i-1} u_{j}^{n_{j}}\right) \left|u_{i}-v_{i}\right| \left(\sum_{t=0}^{n_{i}-1} u_{i}^{t} v_{i}^{n_{i}-1-t}\right) \left(\prod_{j=i+1}^{r} v_{j}^{n_{j}}\right)\nonumber\\
&\leqslant \sum_{i \in [r]} \alpha^{\sum_{j=1}^{i-1} n_{j}} \left|u_{i}-v_{i}\right| n_{i} \alpha^{n_{i}-1} \alpha^{\sum_{j=i+1}^{r} n_{j}} \leqslant \max_{k \in [r]} |u_{k} - v_{k}| \left(\sum_{i \in [r]} n_{i}\right) \alpha^{\sum_{j \in [r]} n_{j}-1}.\nonumber
\end{align} 
This allows us to bound $\left|G_{j,S,\bm{\chi}}\left(\mathbf{x}_{S}\right) - G_{j,S,\bm{\chi}}\left(\mathbf{y}_{S}\right)\right|$ by
\begin{align}
%&\left|\sum_{\mathbf{n}_{S} = (n_{k}: k \in S) \in \mathbb{N}_{0}^{S}} \prod_{k \in S} x_{k}^{n_{k}} \chi_{j,S}\left(\mathbf{n}_{S}\right) - \sum_{\mathbf{n}_{S} = (n_{k}: k \in S) \in \mathbb{N}_{0}^{S}} \prod_{k \in S} y_{k}^{n_{k}} \chi_{j,S}\left(\mathbf{n}_{S}\right)\right| \nonumber\\
&\sum_{\mathbf{n}_{S} = (n_{k}: k \in S) \in \mathbb{N}_{0}^{S}} \left|\prod_{k \in S} x_{k}^{n_{k}} - \prod_{k \in S} y_{k}^{n_{k}}\right| \chi_{j,S}\left(\mathbf{n}_{S}\right) \leqslant \sum_{\mathbf{n}_{S} = (n_{k}: k \in S) \in \mathbb{N}_{0}^{S}} \max_{k \in S}|x_{k} - y_{k}| \left(\sum_{k \in S} n_{k}\right) \alpha^{\sum_{k \in S} n_{k} - 1}\chi_{j,S}\left(\mathbf{n}_{S}\right) \nonumber\\
&= \max_{k \in S}|x_{k} - y_{k}| \sum_{M \in \mathbb{N}_{0}} M \alpha^{M-1} \sum_{\mathbf{n}_{S} = (n_{k}: k \in S) \in \mathbb{N}_{0}^{S}: \sum_{k \in S} n_{k} = M} \chi_{j,S}\left(\mathbf{n}_{S}\right) \nonumber\\
&= \max_{k \in S}|x_{k} - y_{k}| \sum_{M \in \mathbb{N}} M \alpha^{M-1} \Prob_{\mathcal{L}_{\bm{\chi}}}\left[\sum_{k \in S} X_{\phi,k} = M\big|\sigma(\phi) = j\right] \leqslant \max_{k \in S}|x_{k} - y_{k}| S_{\alpha},\nonumber
\end{align}
which gives us \eqref{main_3_lem_2_eq_1}. To deduce \eqref{main_3_lem_2_eq_2}, similar computations lead to the general inequality
\begin{equation}
\left|\prod_{k \in [r]} u_{k}^{n_{k}} - \prod_{k \in [r]} v_{k}^{n_{k}}\right| \leqslant \max_{k \in [r]} |u_{k} - v_{k}| \left(\sum_{i \in [r]} n_{i}\right), \nonumber
\end{equation}
and using this, we bound $\left|G_{j,S,\bm{\chi}}\left(\mathbf{x}_{S}\right) - G_{j,S,\bm{\chi}}\left(\mathbf{y}_{S}\right)\right|$ by
\begin{multline}
\sum_{\mathbf{n}_{S} = (n_{k}: k \in S) \in \mathbb{N}_{0}^{S}} \max_{k \in S}|x_{k} - y_{k}| \left(\sum_{k \in S} n_{k}\right)\chi_{j,S}\left(\mathbf{n}_{S}\right) = \max_{k \in S}|x_{k} - y_{k}| \sum_{M \in \mathbb{N}_{0}} M \sum_{\substack{n_{k} \in \mathbb{N}_{0}: k \in S\\ \sum_{k \in S} n_{k} = M}} \chi_{j,S}\left(\mathbf{n}_{S}\right) \\= \max_{k \in S}|x_{k} - y_{k}| \sum_{M \in \mathbb{N}_{0}} M \Prob_{\mathcal{L}_{\bm{\chi}}}\left[\sum_{k \in S} X_{\phi,k} = M\big|\sigma(\phi) = j\right] = \max_{k \in S}|x_{k} - y_{k}| \mathcal{E}_{j,S,\bm{\chi}}.\nonumber \qedhere
\end{multline}
%which gives us \eqref{main_3_lem_2_eq_2}. 
\end{proof}

\begin{lemma}\label{main_3_lem_3}
Given laws $\mathcal{L}_{\bm{\chi}}$ and $\mathcal{L}_{\bm{\eta}}$, subset $S \subset [m]$ and $\mathbf{x}_{S} = (x_{k}: k \in S) \in [0,1]^{S}$, we have
\begin{equation}
\left|G_{j,S,\bm{\chi}}\left(\mathbf{x}_{S}\right) - G_{j,S,\bm{\eta}}\left(\mathbf{x}_{S}\right)\right| \leqslant 2d_{0}\left(\mathcal{L}_{\bm{\chi}}, \mathcal{L}_{\bm{\eta}}\right).
\end{equation}
\end{lemma}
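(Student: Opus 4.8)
The plan is to expand the power generating functions directly from their definition, bound $\prod_{k\in S} x_k^{n_k}$ by $1$ since all $x_k \in [0,1]$, and then recognize the resulting sum over $\mathbf{n}_S$ as (twice) a total variation distance between the marginal laws $\chi_{j,S}$ and $\eta_{j,S}$, which in turn is dominated by the total variation distance between the full laws $\chi_j$ and $\eta_j$.

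Concretely, I would first write, using \eqref{eq:pgf} applied to the distributions $\chi_{j,S}$ and $\eta_{j,S}$ (recall that $G_{j,S}$ is the pgf of $\chi_{j,S}$, as defined in \S\ref{subsec:notation} via \eqref{eq:dist_partial}),
\begin{equation}
\left|G_{j,S,\bm{\chi}}\left(\mathbf{x}_{S}\right) - G_{j,S,\bm{\eta}}\left(\mathbf{x}_{S}\right)\right| = \left|\sum_{\mathbf{n}_{S} = (n_{k}: k\in S) \in \mathbb{N}_{0}^{S}} \left(\prod_{k \in S} x_{k}^{n_{k}}\right)\left(\chi_{j,S}\left(\mathbf{n}_{S}\right) - \eta_{j,S}\left(\mathbf{n}_{S}\right)\right)\right|.\nonumber
\end{equation}
Next I would apply the triangle inequality and the bound $0 \leqslant \prod_{k \in S} x_{k}^{n_{k}} \leqslant 1$ to get that this is at most $\sum_{\mathbf{n}_{S}} \left|\chi_{j,S}\left(\mathbf{n}_{S}\right) - \eta_{j,S}\left(\mathbf{n}_{S}\right)\right|$.

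The key step is then to pass from the marginal distributions $\chi_{j,S}$, $\eta_{j,S}$ back to $\chi_j$, $\eta_j$. Using \eqref{eq:dist_partial}, $\chi_{j,S}\left(\mathbf{n}_{S}\right) = \sum_{\mathbf{n}_{[m] \setminus S}} \chi_{j}\left(\mathbf{n}_{S} \vee \mathbf{n}_{[m] \setminus S}\right)$ (and similarly for $\eta_j$), so another application of the triangle inequality yields $\left|\chi_{j,S}\left(\mathbf{n}_{S}\right) - \eta_{j,S}\left(\mathbf{n}_{S}\right)\right| \leqslant \sum_{\mathbf{n}_{[m] \setminus S}} \left|\chi_{j}\left(\mathbf{n}_{S} \vee \mathbf{n}_{[m] \setminus S}\right) - \eta_{j}\left(\mathbf{n}_{S} \vee \mathbf{n}_{[m] \setminus S}\right)\right|$. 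Summing over $\mathbf{n}_{S} \in \mathbb{N}_{0}^{S}$ and reindexing the double sum by $\mathbf{n} \in \mathbb{N}_{0}^{m}$, I obtain $\sum_{\mathbf{n} \in \mathbb{N}_{0}^{m}} \left|\chi_{j}(\mathbf{n}) - \eta_{j}(\mathbf{n})\right| = 2\|\chi_{j} - \eta_{j}\|_{\tv} \leqslant 2 d_{0}\left(\mathcal{L}_{\bm{\chi}}, \mathcal{L}_{\bm{\eta}}\right)$, the last inequality being the definition \eqref{metric_d_{0}} of $d_{0}$. (This is essentially the same device already used in the proof of Lemma~\ref{main_3_lem_1}, just applied without first fixing $\mathbf{n}_S = \mathbf{0}_{S}$.)

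There is no real obstacle here — the whole argument is two uses of the triangle inequality plus Fubini/Tonelli to rearrange the nonnegative double sum; the only point requiring the smallest bit of care is keeping track that summing a marginal's $\ell^1$-error over all $\mathbf{n}_S$ recovers the full $\ell^1$-distance $\sum_{\mathbf{n}}|\chi_j(\mathbf{n})-\eta_j(\mathbf{n})| = 2\|\chi_j - \eta_j\|_{\tv}$, and that the factor $2$ is exactly the one appearing in the standard identity relating total variation distance to the $\ell^1$ norm of the difference of probability mass functions.
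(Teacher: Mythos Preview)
Your proposal is correct and follows essentially the same approach as the paper's own proof: expand the pgfs, apply the triangle inequality together with $\prod_{k\in S} x_k^{n_k}\leqslant 1$, pass from the marginal laws $\chi_{j,S},\eta_{j,S}$ to the full laws $\chi_j,\eta_j$ via another triangle inequality, and identify the resulting sum as $2\|\chi_j-\eta_j\|_{\tv}\leqslant 2d_0(\mathcal{L}_{\bm{\chi}},\mathcal{L}_{\bm{\eta}})$.
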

\begin{proof}
%Using \eqref{metric_d_{0}}, we can bound $\left|G_{j,S,\bm{\chi}}\left(\mathbf{x}_{S}\right) - G_{j,S,\bm{\eta}}\left(\mathbf{x}_{S}\right)\right|$ by
$\begin{aligned}[t]
&\left|G_{j,S,\bm{\chi}}\left(\mathbf{x}_{S}\right) - G_{j,S,\bm{\eta}}\left(\mathbf{x}_{S}\right)\right| = \left|\sum_{\mathbf{n}_{S} = (n_{k}: k \in S) \in \mathbb{N}_{0}^{S}}\prod_{k \in S} x_{k}^{n_{k}} \chi_{j,S}(\mathbf{n}_{S}) - \sum_{\mathbf{n}_{S} = (n_{k}: k \in S) \in \mathbb{N}_{0}^{S}}\prod_{k \in S} x_{k}^{n_{k}} \eta_{j,S}(\mathbf{n}_{S})\right| \nonumber\\
&\leqslant \sum_{\mathbf{n}_{S} = (n_{k}: k \in S) \in \mathbb{N}_{0}^{S}}\prod_{k \in S} x_{k}^{n_{k}} \left|\chi_{j,S}(\mathbf{n}_{S}) - \eta_{j,S}(\mathbf{n}_{S})\right| \nonumber\\
&\leqslant \sum_{\mathbf{n}_{[m] \setminus S} = (n_{k}: k \in [m] \setminus S) \in \mathbb{N}_{0}^{[m] \setminus S}} \sum_{\mathbf{n}_{S} = (n_{k}: k \in S) \in \mathbb{N}_{0}^{S}} \left|\chi_{j}\left(\mathbf{n}_{S} \vee \mathbf{n}_{[m] \setminus S}\right) - \eta_{j}\left(\mathbf{n}_{S} \vee \mathbf{n}_{[m] \setminus S}\right)\right| = 2||\chi_{j} - \eta_{j}||_{\tv}. \nonumber \qedhere
\end{aligned}$ 
%as desired.  \leqslant 2d_{0}\left(\mathcal{L}_{\bm{\chi}}, \mathcal{L}_{\bm{\eta}}\right)
\end{proof}

\begin{lemma}\label{main_3_lem_5}
Given laws $\mathcal{L}_{\bm{\chi}}$ and $\mathcal{L}_{\bm{\eta}}$ with $d_{0}\left(\mathcal{L}_{\bm{\chi}}, \mathcal{L}_{\bm{\eta}}\right) \leqslant \epsilon_{1}$, and $\mathbf{x} = (x_{k}: k \in [m])$ and $\mathbf{y} = (y_{k}: k \in [m])$ in $[0,\alpha]^{m}$, for some $\alpha \in (0,1)$, with $\max_{k \in [m]}|x_{k} - y_{k}| \leqslant \epsilon_{2}$, we have
\begin{equation}\label{main_3_lem_5_eq_1}
\left|G_{j,S,\bm{\chi}}\left(x_{k}: k \in S\right) - G_{j,S,\bm{\eta}}\left(y_{k}: k \in S\right)\right| \leqslant 2\epsilon_{1} + S_{\alpha} \epsilon_{2}.
\end{equation}
for any subset $S$ of $[m]$. If $\mathcal{E}_{j,S,\bm{\chi}} = \E_{\mathcal{L}_{\bm{\chi}}}\left[\sum_{k \in S} X_{v,k}\big|\sigma(v) = j\right]$ is finite, for $\mathbf{x} = (x_{k}: k \in [m])$ and $\mathbf{y} = (y_{k}: k \in [m])$ in $[0,1]^{m}$ with $\max_{k \in [m]}|x_{k} - y_{k}| \leqslant \epsilon_{2}$, we have
\begin{equation}\label{main_3_lem_5_eq_2}
\left|G_{j,S,\bm{\chi}}\left(x_{k}: k \in S\right) - G_{j,S,\bm{\eta}}\left(y_{k}: k \in S\right)\right| \leqslant 2\epsilon_{1} + \mathcal{E}_{j,S,\bm{\chi}} \epsilon_{2}.
\end{equation}
\end{lemma}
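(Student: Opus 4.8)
The statement is a ``combine the two previous lemmas'' claim, so the plan is to interpolate between $G_{j,S,\bm{\chi}}(\mathbf{x}_S)$ and $G_{j,S,\bm{\eta}}(\mathbf{y}_S)$ through the intermediate point $G_{j,S,\bm{\chi}}(\mathbf{y}_S)$, using the triangle inequality:
\begin{equation}
\left|G_{j,S,\bm{\chi}}\left(\mathbf{x}_S\right) - G_{j,S,\bm{\eta}}\left(\mathbf{y}_S\right)\right| \leqslant \left|G_{j,S,\bm{\chi}}\left(\mathbf{x}_S\right) - G_{j,S,\bm{\chi}}\left(\mathbf{y}_S\right)\right| + \left|G_{j,S,\bm{\chi}}\left(\mathbf{y}_S\right) - G_{j,S,\bm{\eta}}\left(\mathbf{y}_S\right)\right|.\nonumber
\end{equation}
The first summand is a ``same law, different argument'' discrepancy, handled by Lemma~\ref{main_3_lem_2}; the second is a ``same argument, different law'' discrepancy, handled by Lemma~\ref{main_3_lem_3}.

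\textbf{Key steps for \eqref{main_3_lem_5_eq_1}.} First I would note that since $\mathbf{x},\mathbf{y} \in [0,\alpha]^m$, their restrictions $\mathbf{x}_S,\mathbf{y}_S$ lie in $[0,\alpha]^{|S|}$, so the hypothesis of \eqref{main_3_lem_2_eq_1} is met; applying it gives $\left|G_{j,S,\bm{\chi}}(\mathbf{x}_S) - G_{j,S,\bm{\chi}}(\mathbf{y}_S)\right| \leqslant \max_{k\in S}|x_k - y_k|\, S_\alpha \leqslant \epsilon_2 S_\alpha$, where the last step uses $\max_{k\in S}|x_k-y_k| \leqslant \max_{k\in[m]}|x_k-y_k| \leqslant \epsilon_2$. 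Next, Lemma~\ref{main_3_lem_3} applied with argument $\mathbf{y}_S$ gives $\left|G_{j,S,\bm{\chi}}(\mathbf{y}_S) - G_{j,S,\bm{\eta}}(\mathbf{y}_S)\right| \leqslant 2 d_0(\mathcal{L}_{\bm{\chi}},\mathcal{L}_{\bm{\eta}}) \leqslant 2\epsilon_1$. Adding the two bounds yields \eqref{main_3_lem_5_eq_1}.

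\textbf{Key steps for \eqref{main_3_lem_5_eq_2}.} This is verbatim the same argument, except that $\alpha$ is dropped: since $\mathcal{E}_{j,S,\bm{\chi}}$ is finite, the first summand is bounded via \eqref{main_3_lem_2_eq_2} by $\max_{k\in S}|x_k-y_k|\,\mathcal{E}_{j,S,\bm{\chi}} \leqslant \epsilon_2 \mathcal{E}_{j,S,\bm{\chi}}$ for $\mathbf{x}_S,\mathbf{y}_S$ anywhere in $[0,1]^{|S|}$, and the second summand is again at most $2\epsilon_1$ by Lemma~\ref{main_3_lem_3}. Summing gives \eqref{main_3_lem_5_eq_2}.

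\textbf{Main obstacle.} Honestly there is no real obstacle here: the lemma is a bookkeeping consequence of Lemmas~\ref{main_3_lem_2} and~\ref{main_3_lem_3}. The only point requiring a line of care is checking that the domain restrictions ($\mathbf{y}_S \in [0,\alpha]^{|S|}$ in the first part, finiteness of $\mathcal{E}_{j,S,\bm{\chi}}$ in the second) are exactly what the invoked lemmas demand, and that passing from $\max_{k\in[m]}$ to $\max_{k\in S}$ only decreases the quantity. Everything else is the triangle inequality.
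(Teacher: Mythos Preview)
Your proposal is correct and matches the paper's own proof essentially line for line: the paper also splits via the triangle inequality through the intermediate value $G_{j,S,\bm{\chi}}(y_{k}: k \in S)$, bounds the ``same law, different argument'' piece by Lemma~\ref{main_3_lem_2} (using \eqref{main_3_lem_2_eq_1} for the first part and \eqref{main_3_lem_2_eq_2} for the second), and bounds the ``same argument, different law'' piece by Lemma~\ref{main_3_lem_3}. There is nothing to add.
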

\begin{proof} 
The left sides of both \eqref{main_3_lem_5_eq_1} and \eqref{main_3_lem_5_eq_2} can be bounded above as follows:
\begin{multline}
\left|G_{j,S,\bm{\chi}}\left(x_{k}: k \in S\right) - G_{j,S,\bm{\eta}}\left(y_{k}: k \in S\right)\right| \leqslant \left|G_{j,S,\bm{\chi}}\left(x_{k}: k \in S\right) - G_{j,S,\bm{\chi}}\left(y_{k}: k \in S\right)\right| \\+ \left|G_{j,S,\bm{\chi}}\left(y_{k}: k \in S\right) - G_{j,S,\bm{\eta}}\left(y_{k}: k \in S\right)\right|.\nonumber
\end{multline}
Under the hypothesis of the first part of Lemma \ref{main_3_lem_5}, we use \eqref{main_3_lem_2_eq_1} to get
\begin{equation}
\left|G_{j,S,\bm{\chi}}\left(x_{k}: k \in S\right) - G_{j,S,\bm{\chi}}\left(y_{k}: k \in S\right)\right| \leqslant \max_{k \in S}|x_{k} - y_{k}| S_{\alpha} \leqslant \epsilon_{2} S_{\alpha},\nonumber
\end{equation}
and by Lemma~\ref{main_3_lem_3}, we have $\left|G_{j,S,\bm{\chi}}\left(y_{k}: k \in S\right) - G_{j,S,\bm{\eta}}\left(y_{k}: k \in S\right)\right| \leqslant 2d_{0}\left(\mathcal{L}_{\bm{\chi}}, \mathcal{L}_{\bm{\eta}}\right) \leqslant 2\epsilon_{1}$. Combining everything, we get \eqref{main_3_lem_5_eq_1}. To deduce \eqref{main_3_lem_5_eq_2}, we apply \eqref{main_3_lem_2_eq_2} instead of \eqref{main_3_lem_2_eq_1}.
\end{proof} 

\begin{lemma}\label{main_3_lem_4}
For each $j \in [m]$ and $n \in \mathbb{N}$, we have $\nw_{1,j,\bm{\chi}}^{(n)} \leqslant 1 - \nl_{1,j,\bm{\chi}}^{(n)} \leqslant 1 - \alpha_{j,\bm{\chi}}$, whereas $\nw_{2,j,\bm{\chi}}^{(n)} \leqslant 1 - \nl_{2,j,\bm{\chi}}^{(n)} \leqslant 1 - \beta_{j,\bm{\chi}}$. We also have $\mw_{1,j,\bm{\chi}}^{(n)} \leqslant 1 - \ml_{1,j,\bm{\chi}}^{(n)} \leqslant 1 + \alpha_{j,\bm{\chi}} - G_{j,S_{j},\bm{\chi}}\left(\beta_{k,\bm{\chi}}: k \in S_{j}\right)$, whereas $\mw_{2,j,\bm{\chi}}^{(n)} \leqslant 1 - \ml_{2,j,\bm{\chi}}^{(n)} \leqslant 1 + \beta_{j,\bm{\chi}} - G_{j,[m] \setminus S_{j},\bm{\chi}}\left(\alpha_{k,\bm{\chi}}: k \in [m] \setminus S_{j}\right)$. 
\end{lemma}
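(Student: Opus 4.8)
The plan is to break each of the four displayed chains into its two constituent inequalities and treat them by entirely different means. The first inequality in every chain, $\nw_{i,j,\bm{\chi}}^{(n)} \leqslant 1 - \nl_{i,j,\bm{\chi}}^{(n)}$ and its mis\`ere analogue $\mw_{i,j,\bm{\chi}}^{(n)} \leqslant 1 - \ml_{i,j,\bm{\chi}}^{(n)}$, is purely a partition statement: by the very definitions in \S\ref{subsec:normal_recursions} (and \S\ref{subsec:misere_recursions}) the sets $\NW_{i,j}^{(n)}$, $\NL_{i,j}^{(n)}$, $\ND_{i,j}^{(n)}$ form a partition of the colour-$j$ vertices of $\mathcal{T}$, so, conditioning on $\sigma(\phi)=j$, one has $\nw_{i,j,\bm{\chi}}^{(n)} + \nl_{i,j,\bm{\chi}}^{(n)} + \nd_{i,j,\bm{\chi}}^{(n)} = 1$ with $\nd_{i,j,\bm{\chi}}^{(n)} \geqslant 0$, and dropping the last term gives the claim; the mis\`ere version is identical with $\MW$, $\ML$, $\MD$ in place of $\NW$, $\NL$, $\ND$.

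For the second inequality in each chain I would produce a lower bound on the truncated loss probability that is uniform in $n$, by evaluating the recursions of \S\ref{subsec:normal_recursions} and \S\ref{subsec:misere_recursions} at the first stage at which a loss can occur and then invoking monotonicity. For the normal game that first stage is $n=1$: from \eqref{normal_recur_3}--\eqref{normal_recur_4} with zero vectors substituted (equivalently, since P$i$ can lose a normal game in fewer than one round only by being unable to move at all) we get $\nl_{1,j,\bm{\chi}}^{(1)} = \alpha_{j,\bm{\chi}}$ and $\nl_{2,j,\bm{\chi}}^{(1)} = \beta_{j,\bm{\chi}}$; since $\NL_{i,j}^{(n)} \subseteq \NL_{i,j}^{(n+1)}$ (as noted in the proof of Lemma~\ref{lem:normal_compactness_consequence}) the sequences $\nl_{i,j,\bm{\chi}}^{(n)}$ are non-decreasing, so $\nl_{1,j,\bm{\chi}}^{(n)} \geqslant \alpha_{j,\bm{\chi}}$ and $\nl_{2,j,\bm{\chi}}^{(n)} \geqslant \beta_{j,\bm{\chi}}$ for all $n$, and rearranging yields the two normal-game bounds. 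For the mis\`ere game the first stage at which P$i$ can lose is $n=2$ (a loss requires the opponent to be stuck, which costs one extra ply): using $\mw_{2,k,\bm{\chi}}^{(1)} = \beta_{k,\bm{\chi}}$ in \eqref{misere_recur_3} gives $\ml_{1,j,\bm{\chi}}^{(2)} = G_{j,S_{j},\bm{\chi}}\!\left(\beta_{k,\bm{\chi}}: k \in S_{j}\right) - \alpha_{j,\bm{\chi}}$, and symmetrically $\ml_{2,j,\bm{\chi}}^{(2)} = G_{j,[m]\setminus S_{j},\bm{\chi}}\!\left(\alpha_{k,\bm{\chi}}: k \in [m]\setminus S_{j}\right) - \beta_{j,\bm{\chi}}$; monotonicity of $\ml_{i,j,\bm{\chi}}^{(n)}$ (from $\ML_{i,j}^{(n)} \subseteq \ML_{i,j}^{(n+1)}$, underlying \eqref{eq:misere_compactness_consequence}) then gives $\ml_{i,j,\bm{\chi}}^{(n)} \geqslant \ml_{i,j,\bm{\chi}}^{(2)}$, which rearranges to the stated upper bounds on $1 - \ml_{i,j,\bm{\chi}}^{(n)}$ for all $n \geqslant 2$. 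The trivial stage $\ml_{i,j,\bm{\chi}}^{(1)} = 0$ is not used anywhere in the sequel (the truncated quantities enter Theorem~\ref{thm:main_3} only through their $n\to\infty$ limits), so it suffices to record the mis\`ere bounds for $n \geqslant 2$.

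There is no serious obstacle here; the only point requiring care is the bookkeeping of \emph{which} truncation index is the right base case, namely stage $1$ for the normal game but stage $2$ for the mis\`ere game, because in a mis\`ere game being the loser means one's opponent is the one who gets stuck, so the decisive configuration is detected one ply later than in the normal game. The rest is the elementary observation that the truncated win/loss/draw probabilities sum to one and that the truncated loss probabilities are monotone in $n$ — both facts are already in hand from the proofs of Lemmas~\ref{lem:normal_compactness} and \ref{lem:normal_compactness_consequence} — so the lemma follows by combining them with the one-step recursion values computed above.
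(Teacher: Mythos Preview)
Your proposal is correct and follows essentially the same approach as the paper: the first inequality in each chain comes from the partition $\nw^{(n)}+\nl^{(n)}+\nd^{(n)}=1$ (and its mis\`ere analogue), and the second comes from a uniform lower bound on the truncated loss probability obtained from the first nontrivial stage of the recursion together with monotonicity. The paper phrases the latter as a direct combinatorial description of the losing event rather than by evaluating the recursion, but the content is identical; you are also right to flag that the mis\`ere bound on $1-\ml_{i,j,\bm{\chi}}^{(n)}$ strictly speaking holds only for $n\geqslant 2$, a point the paper's statement glosses over but which is harmless for its application in Theorem~\ref{thm:main_3}.
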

\begin{proof}
If the initial vertex $v$, with $\sigma(v) = j$, has no child of colour $k$ for any $k \in S_{j}$, and P1 plays the first round, she loses. Thus $\nl_{1,j}^{(n)} \geqslant \alpha_{j,\bm{\chi}}$ for each $n \in \mathbb{N}$. This yields $\nw_{1,j,\bm{\chi}}^{(n)} \leqslant 1 - \nl_{1,j,\bm{\chi}}^{(n)} \leqslant 1 - \alpha_{j,\bm{\chi}}$, as desired. The claim about $\nw_{2,j,\bm{\chi}}^{(n)}$ and $1 - \nl_{2,j,\bm{\chi}}^{(n)}$ follows similarly. 

Let $v$, with $\sigma(v) = j$, be the initial vertex. If for every child $u$ of $v$ that is of colour $k$ for any $k \in S_{j}$, $u$ has no child of colour $\ell$ for any $\ell \in [m] \setminus S_{k}$, and P1 plays the first round of the mis\`{e}re game, she is forced to move the token to such a $u$, and P2 fails to move in the second round, thus winning the game. Thus, for each $n \in \mathbb{N}$,
\begin{align}
\ml_{1,j,\bm{\chi}}^{(n)} &\geqslant \sum_{(n_{k}: k \in S_{j}) \in \mathbb{N}_{0}^{|S_{j}|} \setminus \left\{\mathbf{0}_{S_{j}}\right\}} \prod_{k \in S_{j}} \beta_{k,\bm{\chi}}^{n_{k}} \chi_{j,S_{j}}\left(n_{k}: k \in S_{j}\right) = G_{j,S_{j},\bm{\chi}}\left(\beta_{k,\bm{\chi}}: k \in S_{j}\right) - \alpha_{j,\bm{\chi}}.\nonumber
\end{align}
Consequently, both $\mw_{1,j,\bm{\chi}}^{(n)}$ and $1 - \ml_{1,j,\bm{\chi}}^{(n)}$ are bounded above by $1 + \alpha_{j,\bm{\chi}} - G_{j,S_{j},\bm{\chi}}\left(\beta_{k,\bm{\chi}}: k \in S_{j}\right)$. The claim about $\mw_{2,j,\bm{\chi}}^{(n)}$ and $1 - \ml_{2,j,\bm{\chi}}^{(n)}$ follows similarly.
\end{proof}

\begin{proof}[Proof of Theorem~\ref{thm:main_3}]
Proof of \ref{main_3_part_1}: We establish the claim for $\nw_{1,j,\bm{\chi}}$, for any fixed $j \in [m]$. Lemma~\ref{lem:normal_compactness_consequence} implies that $\nw_{1,j,\bm{\chi}}$ is the limit of the increasing sequence $\left\{\nw_{1,j,\bm{\chi}}^{(n)}\right\}_{n}$. It thus suffices to show that $\nw_{1,j,\bm{\chi}}^{(n)}$ is a continuous function of $\mathcal{L}_{\bm{\chi}}$ for each $n \in \mathbb{N}_{0}$, with respect to $d_{0}$. 

We assume, for some $n \in \mathbb{N}_{0}$, that $\nw_{1,j,\bm{\chi}}^{(n)}$ is continuous in $\mathcal{L}_{\bm{\chi}}$, and show that $\nw_{1,j,\bm{\chi}}^{(n+2)} = F_{N,j,\bm{\chi}}\left(\bnw_{1,\bm{\chi}}^{(n)}\right)$ is continuous in $\mathcal{L}_{\bm{\chi}}$ as well. The base case of $n = 0$ is immediate since $\nw_{1,j,\bm{\chi}}^{(0)} = 0$. 

First, consider $\mathcal{L}_{\bm{\chi}}$ in $\mathcal{D}_{1}$. From Lemma~\ref{main_3_lem_1} and Lemma~\ref{main_3_lem_4}, if we choose $\mathcal{L}_{\bm{\eta}}$ such that 
\begin{equation}\label{dist_laws_main_3_part_1}
d_{0}\left(\mathcal{L}_{\bm{\eta}}, \mathcal{L}_{\bm{\chi}}\right) \leqslant \epsilon_{1} < \frac{1}{4} \min\left\{\alpha_{j,\bm{\chi}}: j \in [m]\right\}, 
\end{equation}
then $\max\left\{\nw_{1,j,\bm{\chi}}^{(n)}, \nw_{1,j,\bm{\eta}}^{(n)}\right\} \leqslant 1 - \frac{1}{2} \alpha_{j,\bm{\chi}} \text{ for each } j \in [m]$. Setting $\alpha = \max\left\{1 - \frac{\alpha_{j,\bm{\chi}}}{2}: j \in [m]\right\}$, we use \eqref{main_3_lem_5_eq_1} of Lemma~\ref{main_3_lem_5} and \eqref{dist_laws_main_3_part_1} to write, for each $k \in [m]$,
\begin{multline}\label{main_3_part_1_upper_bound_1}
\left|G_{k,[m] \setminus S_{k},\bm{\chi}}\left(\nw_{1,\ell,\bm{\chi}}^{(n)}: \ell \in [m] \setminus S_{k}\right) - G_{k,[m] \setminus S_{k},\bm{\eta}}\left(\nw_{1,\ell,\bm{\eta}}^{(n)}: \ell \in [m] \setminus S_{k}\right)\right|\\ \leqslant 2\epsilon_{1} + S_{\alpha}\max_{j \in [m]}\left|\nw_{1,j,\bm{\chi}}^{(n)} - \nw_{1,j,\bm{\eta}}^{(n)}\right|. 
\end{multline}
If $\mathcal{L}_{\bm{\chi}} \in \mathcal{D}_{4}$, then for $\mathcal{L}_{\bm{\eta}}$ satisfying \eqref{dist_laws_main_3_part_1}, using \eqref{main_3_lem_5_eq_2} of Lemma~\ref{main_3_lem_5}, we have, for each $k \in [m]$,
\begin{multline}\label{main_3_part_1_upper_bound_2}
\left|G_{k,[m] \setminus S_{k},\bm{\chi}}\left(\nw_{1,\ell,\bm{\chi}}^{(n)}: \ell \in [m] \setminus S_{k}\right) - G_{k,[m] \setminus S_{k},\bm{\eta}}\left(\nw_{1,\ell,\bm{\eta}}^{(n)}: \ell \in [m] \setminus S_{k}\right)\right|\\ \leqslant 2\epsilon_{1} + \mathcal{E}_{k,[m] \setminus S_{k},\bm{\chi}}\max_{j \in [m]}\left|\nw_{1,j,\bm{\chi}}^{(n)} - \nw_{1,j,\bm{\eta}}^{(n)}\right|. 
\end{multline}
We let $\epsilon_{3}$ denote the upper bound in \eqref{main_3_part_1_upper_bound_1} when $\mathcal{L}_{\bm{\chi}} \in \mathcal{D}_{1}$, and we let 
\begin{equation}\label{epsilon_{3}_defn}
\epsilon_{3} = 2\epsilon_{1} + \mathcal{E}_{\bm{\chi}}\max_{j \in [m]}\left|\nw_{1,j,\bm{\chi}}^{(n)} - \nw_{1,j,\bm{\eta}}^{(n)}\right|, \text{ with } \mathcal{E}_{\bm{\chi}} = \max_{k \in [m]}\mathcal{E}_{k,[m] \setminus S_{k},\bm{\chi}},
\end{equation}
when $\mathcal{L}_{\bm{\chi}} \in \mathcal{D}_{4}$. Next, from Lemma~\ref{main_3_lem_4} and \eqref{normal_recur_4}, we have $1 - G_{k,[m] \setminus S_{k},\bm{\chi}}\left(\nw_{1,\ell,\bm{\chi}}^{(n)}: \ell \in [m] \setminus S_{k}\right) = 1 - \nl_{2,k,\bm{\chi}}^{(n+1)} \leqslant 1 - \beta_{k,\bm{\chi}}$. If $\mathcal{L}_{\bm{\chi}}$ in $\mathcal{D}_{2}$ and $\mathcal{L}_{\bm{\eta}}$ satisfies
\begin{equation}\label{dist_cond_main_3_part_1}
d_{0}\left(\mathcal{L}_{\bm{\eta}}, \mathcal{L}_{\bm{\chi}}\right) \leqslant \epsilon_{1} < \frac{1}{4} \min\left\{\beta_{j,\bm{\chi}}: j \in [m]\right\},
\end{equation}
then from Lemma~\ref{main_3_lem_1} and Lemma~\ref{main_3_lem_4}, we have, for each $j \in [m]$,
\begin{equation}
\max\left\{1 - G_{k,[m] \setminus S_{k},\bm{\chi}}\left(\nw_{1,\ell,\bm{\chi}}^{(n)}: \ell \in [m] \setminus S_{k}\right), 1 - G_{k,[m] \setminus S_{k},\bm{\eta}}\left(\nw_{1,\ell,\bm{\eta}}^{(n)}: \ell \in [m] \setminus S_{k}\right)\right\} \leqslant 1 - \frac{1}{2} \beta_{j,\bm{\chi}}.\nonumber
\end{equation}
Setting $\beta = \max\left\{1 - \frac{\beta_{j,\bm{\chi}}}{2}: j \in [m]\right\}$, we use \eqref{main_3_lem_5_eq_1} of Lemma~\ref{main_3_lem_5} and \eqref{dist_cond_main_3_part_1} to write
\begin{multline}\label{main_3_part_1_upper_bound_3}
\left|F_{N,j,\bm{\chi}}\left(\bnw_{1,\bm{\chi}}^{(n)}\right) - F_{N,j,\bm{\eta}}\left(\bnw_{1,\bm{\eta}}^{(n)}\right)\right| \leqslant 2\epsilon_{1} + S_{\beta} \max_{k \in [m]}\Big|G_{k,[m] \setminus S_{k},\bm{\chi}}\left(\nw_{1,\ell,\bm{\chi}}^{(n)}: \ell \in [m] \setminus S_{k}\right) \\- G_{k,[m] \setminus S_{k},\bm{\eta}}\left(\nw_{1,\ell,\bm{\eta}}^{(n)}: \ell \in [m] \setminus S_{k}\right)\Big| \leqslant 2\epsilon_{1} + S_{\beta} \epsilon_{3}.
\end{multline}
When $\mathcal{L}_{\bm{\chi}}$ is in $\mathcal{D}_{3}$, then for any $\mathcal{L}_{\bm{\eta}}$, we have, using \eqref{main_3_lem_5_eq_2} of Lemma~\ref{main_3_lem_5}:
\begin{multline}\label{main_3_part_1_upper_bound_4}
\left|F_{N,j,\bm{\chi}}\left(\bnw_{1,\bm{\chi}}^{(n)}\right) - F_{N,j,\bm{\eta}}\left(\bnw_{1,\bm{\eta}}^{(n)}\right)\right| \leqslant 2\epsilon_{1} + \mathcal{E}_{j,S_{j},\bm{\chi}} \max_{k \in [m]}\Big|G_{k,[m] \setminus S_{k},\bm{\chi}}\left(\nw_{1,\ell,\bm{\chi}}^{(n)}: \ell \in [m] \setminus S_{k}\right) \\- G_{k,[m] \setminus S_{k},\bm{\eta}}\left(\nw_{1,\ell,\bm{\eta}}^{(n)}: \ell \in [m] \setminus S_{k}\right)\Big| \leqslant 2\epsilon_{1} + \mathcal{E}_{j,S_{j},\bm{\chi}} \epsilon_{3}.
\end{multline}
Note that $S_{\alpha}$, $S_{\beta}$, $\mathcal{E}_{\bm{\chi}}$ and $\mathcal{E}_{j,S_{j},\bm{\chi}}$ are constants dependent only on $\bm{\chi}$. By the induction hypothesis, given $\epsilon_{2} > 0$, there exists $\epsilon_{1} > 0$ such that 
\begin{equation}\label{main_3_part_1_upper_bound_5}
d_{0}\left(\mathcal{L}_{\bm{\chi}}, \mathcal{L}_{\bm{\eta}}\right) \leqslant \epsilon_{1} \implies \max_{j \in [m]} \left|\nw_{1,j,\bm{\chi}}^{(n)} - \nw_{1,j,\bm{\eta}}^{(n)}\right| \leqslant \epsilon_{2}.
\end{equation}
Combining \eqref{main_3_part_1_upper_bound_1}, \eqref{main_3_part_1_upper_bound_2}, \eqref{epsilon_{3}_defn}, \eqref{main_3_part_1_upper_bound_3}, \eqref{main_3_part_1_upper_bound_4} and \eqref{main_3_part_1_upper_bound_5}, we get:
\begin{itemize}
\item For $\mathcal{L}_{\bm{\chi}} \in \mathcal{D}_{1} \cap \mathcal{D}_{2}$ and $d_{0}\left(\mathcal{L}_{\bm{\chi}}, \mathcal{L}_{\bm{\eta}}\right) \leqslant \epsilon_{1}$ with $\epsilon_{1}$ satisfying \eqref{dist_laws_main_3_part_1}, \eqref{dist_cond_main_3_part_1} and \eqref {main_3_part_1_upper_bound_5}, we have
\begin{equation}\label{case_1_main_3_part_1}
\left|F_{N,j,\bm{\chi}}\left(\bnw_{1,\bm{\chi}}^{(n)}\right) - F_{N,j,\bm{\eta}}\left(\bnw_{1,\bm{\eta}}^{(n)}\right)\right| \leqslant 2\epsilon_{1} + S_{\beta} \left(2\epsilon_{1} + S_{\alpha}\epsilon_{2}\right).
\end{equation}
\item For $\mathcal{L}_{\bm{\chi}} \in \mathcal{D}_{1} \cap \mathcal{D}_{3}$ and $d_{0}\left(\mathcal{L}_{\bm{\chi}}, \mathcal{L}_{\bm{\eta}}\right) \leqslant \epsilon_{1}$ with $\epsilon_{1}$ satisfying \eqref{dist_laws_main_3_part_1} and \eqref {main_3_part_1_upper_bound_5}, we have
\begin{equation}\label{case_1_main_3_part_2}
\left|F_{N,j,\bm{\chi}}\left(\bnw_{1,\bm{\chi}}^{(n)}\right) - F_{N,j,\bm{\eta}}\left(\bnw_{1,\bm{\eta}}^{(n)}\right)\right| \leqslant 2\epsilon_{1} + \mathcal{E}_{j,S_{j},\bm{\chi}} (2\epsilon_{1} + S_{\alpha}\epsilon_{2}).
\end{equation}
\item For $\mathcal{L}_{\bm{\chi}} \in \mathcal{D}_{4} \cap \mathcal{D}_{2}$ and $d_{0}\left(\mathcal{L}_{\bm{\chi}}, \mathcal{L}_{\bm{\eta}}\right) \leqslant \epsilon_{1}$ with $\epsilon_{1}$ satisfying \eqref{dist_cond_main_3_part_1} and \eqref {main_3_part_1_upper_bound_5}, we have
\begin{equation}\label{case_1_main_3_part_3}
\left|F_{N,j,\bm{\chi}}\left(\bnw_{1,\bm{\chi}}^{(n)}\right) - F_{N,j,\bm{\eta}}\left(\bnw_{1,\bm{\eta}}^{(n)}\right)\right| \leqslant 2\epsilon_{1} + S_{\beta} \left(2\epsilon_{1} + \mathcal{E}_{\bm{\chi}}\epsilon_{2}\right).
\end{equation}
\item For $\mathcal{L}_{\bm{\chi}} \in \mathcal{D}_{4} \cap \mathcal{D}_{3}$ and $d_{0}\left(\mathcal{L}_{\bm{\chi}}, \mathcal{L}_{\bm{\eta}}\right) \leqslant \epsilon_{1}$ with $\epsilon_{1}$ satisfying \eqref {main_3_part_1_upper_bound_5}, we have
\begin{equation}\label{case_1_main_3_part_4}
\left|F_{N,j,\bm{\chi}}\left(\bnw_{1,\bm{\chi}}^{(n)}\right) - F_{N,j,\bm{\eta}}\left(\bnw_{1,\bm{\eta}}^{(n)}\right)\right| \leqslant 2\epsilon_{1} + \mathcal{E}_{j,S_{j},\bm{\chi}} \left(2\epsilon_{1} + \mathcal{E}_{\bm{\chi}}\epsilon_{2}\right).
\end{equation}
\end{itemize}
From \eqref{main_3_part_1_upper_bound_5}, choosing $\epsilon_{1}$ arbitrarily small, we can make $\epsilon_{2}$ arbitrarily small, and thereby make the upper bound in each of \eqref{case_1_main_3_part_1}, \eqref{case_1_main_3_part_2},\eqref{case_1_main_3_part_3} and \eqref{case_1_main_3_part_4} arbitrarily small. This concludes the proof.

The proof of \ref{main_3_part_2} is almost entirely the same as that of \ref{main_3_part_1}. We only point out that, when $\mathcal{L}_{\bm{\chi}} \in \mathcal{C}_{1}$, we bound $\mw_{1,\ell,\bm{\chi}}^{(n)}$ using Lemma~\ref{main_3_lem_4}. When $\mathcal{L}_{\bm{\chi}} \in \mathcal{C}_{2}$, Lemma~\ref{main_3_lem_4} and the recursions in \S\ref{subsec:misere_recursions} yield
\begin{multline}
1 - G_{k,[m] \setminus S_{k},\bm{\chi}}\left(\mw_{1,\ell,\bm{\chi}}^{(n)}: \ell \in [m] \setminus S_{k}\right) + \beta_{k,\bm{\chi}} = 1 - \ml_{2,k}^{(n+1)}\\ \leqslant 1 - G_{k,[m] \setminus S_{k},\bm{\chi}}\left(\alpha_{\ell,\bm{\chi}}: \ell \in [m] \setminus S_{k}\right) + \beta_{k,\bm{\chi}} < 1.\nonumber
\end{multline}
We skip the proof of \ref{main_3_part_3} entirely as the argument follows the exact same lines as those of \ref{main_3_part_1} and \ref{main_3_part_2}.

When $\nd_{1,j,\bm{\chi}} = 0$, we have $\nw_{1,j,\bm{\chi}} = 1 - \nl_{1,j,\bm{\chi}}$. By \ref{main_3_part_1}, we know that $\nl_{1,j,\bm{\chi}}$ is lower semicontinuous, so that $1 - \nl_{1,j,\bm{\chi}}$ is upper semicontinuous in $\mathcal{L}_{\bm{\chi}}$. This makes $\nw_{1,j,\bm{\chi}}$, again by \ref{main_3_part_1}, both a lower and an upper semicontinuous function. Hence $\nw_{1,j,\bm{\chi}}$ is a continuous function of $\mathcal{L}_{\bm{\chi}}$. 
\end{proof}

\section{Proof of Theorem~\ref{thm:main_4}}\label{sec:main_4_proof}
We use a \emph{forcing strategy} somewhat similar to that of [\cite{holroyd_martin}, Proposition 13]. The root $\phi$ of $\mathcal{T}$ is the initial vertex, and Escaper plays the first round. Let $\mathcal{T}'$ be the subtree comprising paths $\mathcal{P} = (u_{0}, u_{1}, u_{2}, \ldots)$ that satisfy the following conditions (recall $f$ from Theorem~\ref{thm:main_4}): 
\begin{itemize}
\item $u_{0} = \phi$, i.e.\ the root belongs to each such path,
\item $\sigma(u_{2n+1}) \in [m] \setminus S_{\sigma(u_{2n})}$ and $\sigma(u_{2n+2}) \in S_{\sigma(u_{2n+1})}$ for every $n \in \mathbb{N}_{0}$,
\item each $u_{2n+1}$ has precisely one child (\emph{viz}.\ $u_{2n+2}$) whose colour is in $S_{\sigma(u_{2n+1})}$, and $\sigma(u_{2n+2}) = f\left(\sigma(u_{2n+1})\right)$ (note: $u_{2n+1}$ can have any number of children with colours in $[m] \setminus S_{\sigma(u_{2n+1})}$).
\end{itemize}  
If $\mathcal{T}'$ is infinite, and Escaper keeps the game confined to $\mathcal{T}'$, which she can, then each player is \emph{always} able to make a move, leading to a win for Escaper. We remove all the odd-leveled vertices from $\mathcal{T}'$, resulting in another rooted tree $\mathcal{T}''$, so that $\mathcal{T}'$ is infinite iff $\mathcal{T}''$ is infinite. We now seek a criterion that guarantees the survival of $\mathcal{T}''$ with positive probability.

Suppose $u_{2n} \in V\left(\mathcal{T}''\right)$ with $\sigma(u_{2n}) = i$ for some $i \in [m]$, and $v$ is a child of $u_{2n}$ in $\mathcal{T}$ with $\sigma(v) = k$ for some $k \in [m] \setminus S_{i}$. Call $v$ \emph{special} if, of all the children of $v$, there is precisely one, say $w$, whose colour belongs to the set $S_{k}$, and $\sigma(w) = f(k)$. The event that $v$ is special is independent of the subtrees $\mathcal{T}(v')$ of all other children $v'$ of $u_{2n}$, and the probability of this event is given by $\gamma_{k,f(k)}$. Thus, for each $k \in [m] \setminus S_{i}$, the number of special children $v$ of $u_{2n}$ with $\sigma(v) = k$ is given by $\bin\left(X_{u_{2n},k}, \gamma_{k,f(k)}\right)$. The total number of children of colour $j$ of $u_{2n}$ in $\mathcal{T}''$ is distributed as $\sum_{k \in [m] \setminus S_{i}: f(k) = j} \bin\left(X_{u_{2n},k}, \gamma_{k,j}\right)$, which yields
%Letting $m''_{i,j}$ denote the expected number of children of colour $j$ of a vertex $v$ conditioned on $\sigma(v) = i$ in the tree $\mathcal{T}''$, we find, using linearity of expectation, that
\begin{equation}%\label{i,j_mean}
m''_{i,j} = \sum_{k \in [m] \setminus S_{i}: f(k) = j} \E\left[\bin\left(X_{u_{2n},k}, \gamma_{k,j}\right)\big|\sigma(u_{2n}) = i\right] = \sum_{k \in [m] \setminus S_{i}: f(k) = j} m_{i,k} \gamma_{k,j}.\nonumber
\end{equation}
By [\cite{athreya_ney}, Chapter V, \S3, Theorem 2], we conclude that if, for some choice of $f$, the largest eigenvalue of the mean matrix $M'' = \left(\left(m''_{i,j}\right)\right)_{i,j \in [m]}$, is strictly bigger than $1$, then $\mathcal{T}''$ survives with positive probability, thus implying that $\eew_{\sigma(\phi)} > 0$.

Assume $\eew_{j} > 0$ for all $j \in [m]$, and $\sigma(\phi) = i$. Since $\alpha_{i} < 1$, there exist non-negative integers $n_{k}$ for every $k \in S_{i}$, not all $0$ simultaneously, such that $\Prob\left[X_{\phi,k} = n_{k} \text{ for all } k \in S_{i}\big|\sigma(\phi) = i\right] > 0$. Name the children of $\phi$ of colour $k$ as $v_{k,1}, \ldots, v_{k,n_{k}}$. The probability that for every $k \in S_{i}$, every $v_{k,t}$ is in $\EEW_{k}$ for $1 \leqslant t \leqslant n_{k}$, equals $\Prob\left[X_{\phi,k} = n_{k} \text{ for all } k \in S_{i}\big|\sigma(\phi) = i\right] \prod_{k \in S_{i}}\left(\eew_{k}\right)^{n_{k}}$, which by our hypothesis is strictly positive. If $\phi$ is the initial vertex and Stopper plays the first round, Stopper is forced to move the token to some child $v_{k,t}$ of $\phi$ for some $k \in S_{i}$, thus allowing Escaper to win. This yields $\esl_{i} > 0$. A similar argument yields the converse.

\bibliography{Mult_GW_games}
\end{document}